\documentclass[12pt]{article}

\textheight24cm \topmargin-20mm  \textwidth170mm  
\oddsidemargin=4mm
\evensidemargin=-4mm

\usepackage{amsmath, amsthm, amssymb}
\usepackage{color, soul}  
\newtheorem{lemma}{Lemma}
\newtheorem{defi}{Definition}
\newtheorem{thm}{Theorem}
\newtheorem{coro}{Corollary}
\newtheorem{prop}{Proposition}
\newtheorem{rem}{Remark}

\def\<{\langle}
\def\>{\rangle}
\def\Ha{\mathcal H}
\def\Ka{\mathcal K}

\def\Fe{\mathcal F}
\def\Me{\mathcal M}
\def\spn{{\rm span}\,  }
\def\Ne{\mathcal N}

\def\ha{\mathfrak h}
\def\Tr{\mathrm{Tr}\,}
\def\rhoinv{\rho^{\rm {inv}}}
\def\states{\mathfrak S}

\def\ketbra#1#2{|{#1}\rangle\langle{#2}|}

\title{On period, cycles and fixed points of a quantum channel}
\author{
Raffaella Carbone
\\Dipartimento di Matematica dell'Universit\'a di Pavia\\ via Ferrata 1, 27100 Pavia, Italy\\ \texttt{raffaella.carbone@unipv.it}
\\
Anna Jen$\check {\rm c}$ov\'a 
\\
Mathematical Institute of the Slovak Academy of Sciences 
\\
$\check {\rm S}$tef\'anikova 49, 814 73 Bratislava,
Slovakia\\
\texttt{jenca@mat.savba.sk}
}
\begin{document}
\maketitle

\begin{abstract}
We consider a quantum channel acting on an infinite dimensional von Neumann algebra of operators on a separable Hilbert space.
When there exists an invariant normal faithful state,
the cyclic properties of such channels are investigated passing through the decoherence free algebra and the fixed points domain. Both these spaces are proved to be images of a normal conditional expectation so that their consequent atomic structure are analyzed in order to give a better description of the action of the channel and, for instance, of its Kraus form and invariant densities.
\end{abstract}

{\bf Keywords:} quantum channel, invariant states, normal conditional expectation, cycles, fixed points.

\section{Introduction}

Quantum channels are basic tools in quantum theory. As a representation of 
a communication channel, they play a central role in quantum information theory and quantum information processing.
 They are seen as the counterpart of Markov operators in the non commutative models and they are generally used to represent the evolution of an open quantum system in discrete time models.

Some classical results related to Markov chains still need to be clarified in their non commutative version and the quantum theory reveals to be richer and more complicated due to the different framework and 
 techniques we are dealing with. In particular,  
 while the fixed points are a quite natural topic, already extensively studied also in the quantum case, 
the cyclic behavior (related to what is classically called period for a Markov chain) has still many mysterious aspects, starting from the fact that a good definition of a period for any irreducible quantum channel is not recognized by now; moreover these cycles have showed some typically non commutative features.
Both objects (cycles and fixed points), however, display a kind of rigidity in the structure of the channel which can link different irreducible components of the evolution; this is a strongly quantum feature in the sense that it is something that cannot be observed in a purely classical context.

Some aspects of this rigidity were already known and were object of interest in many papers in the last years,  related to different problems: e.g. the structure of the invariant states and irreducible decompositions \cite{BN,capa2016RMP}, decoherence free algebra and environmental decoherence \cite{CSU, DFSU}, the notion of sufficiency in quantum statistics \cite{JencovaPetz, petz1988, luczak}, periodicity and ergodic properties \cite{capa2015}.  In finite dimension, the structure of the channel and its spectrum, cycles and multiplicative properties were investigated in \cite{WolfLN,WPG}. In particular, multiplicative properties were studied  in view of applications to quantum information theory,  such as quantum error correction and private subspaces (e.g. \cite{CJK2009, JK2011, R}) or entanglement breaking channels \cite{RJP2018}.

In the present paper, a quantum channel is a  unital normal completely positive operator $\Phi$ on the algebra $B(\Ha)$ of the bounded operators on a separable (infinite dimensional) Hilbert space $\Ha$. In this setting, we study the subspace of fixed points and the so called decoherence free algebra (DFA) of the channel. The aim is to  obtain a unified description of these spaces and their relations, together with  the restricted action of the channel, in the presence of a faithful normal invariant  state.

Under the last assumption, the fixed point subspace is easily seen to be a subalgebra, moreover, it follows by the mean ergodic theorem for quantum dynamical systems \cite{frigerioverri,KN} that it is the range of a (faithful normal) conditional expectation, contained in the $w*$-closed convex hull 
 of the semigroup generated by the channel. Let us remark that the situation is more complicated in the general case, where the fixed point subspace may be not a subalgebra, see e.g. \cite{AGG,BJKW} for some descriptions, constructions and examples.

The second main object of interest, the decoherence free algebra of the channel, can be  informally defined as the largest subalgebra on which  $\Phi$ acts as a *-endomorphism. For dynamical semigroups of channels, the DFA  was  a very popular object already in the 70's and 80's, extensively used in order to study asymptotic properties of the semigroup (see e.g.\cite{frigerioverri, KN,robinson}), in particular, together with the fixed points, in order to distinguish between ergodicity and mean ergodicity.

More recently, the DFA appeared again in the literature  and was reconsidered because of the interest in reversible subsystems arising in quantum information and in relation with environmental decoherence, as defined in \cite{BO}. 
Most of these previous studies are generally concentrated in the case of a continuous time Markov semigroup. For instance, 
in \cite{DFR} a characterization of fixed points and of the DFA was found  in terms of the Lindblad form of the generator of the quantum dynamical semigroup. The DFA also appears in \cite{CSU} or \cite{Hellmich}, linked  with environmental decoherence and other forms of decompositions of the algebra.

Assuming the existence of a faithful invariant state, the analysis of the peripheral eigenvectors and 
a structural approach to the Perron-Frobenius spectral theory in  \cite{Groh84}, and more recently and in more generality in
\cite{BatkaiEtAl}, produce
the opportunity to split the algebra into a ``stable'' and ``reversible'' part with respect to the semigroup  (a Jacobs-DeLeeuw-Glicksberg type decomposition). The reversible part is a subalgebra 
 spanned by the peripheral  eigenvectors and it is the range of a (faithful normal) conditional expectation commuting with 
 the channel. This subalgebra is contained in the DFA and it is easily seen that in finite dimensions, these two subalgebras coincide,
  \cite{WolfLN}.

As one of our main results, we prove that, { for a channel acting on an atomic von Neumann algebra and with a faithful invariant state,  the reversible subalgebra coincides with DFA also in infinite dimensions.  Note that this implies that the DFA is the range of a conditional expectation, in particular, it is atomic. This allows us  to deduce the structural properties of the DFA and the action of the channel; in particular, we obtain a decomposition of the channel 
 into blocks with a finite cyclic structure. On the other hand, existence of a conditional expectation or more generally the atomicity of DFA  was commonly
assumed as a hypothesis (see \cite{BO,CSU,DFSU}) that allowed to obtain more precise results on environmental decoherence and the structure of the semigroup. Notice that our proof can be applied also to the continuous time case, so we can conclude that these results hold more generally. Furthermore, one could use the conditional expectation for the study of the decoherence time and spectral gap inequalities as in 
\cite{bardet2017,BR2018} in finite dimesions. These possiblities are remarked on but not pursued further in the paper,  and  left for future work.

The paper is structured as follows.
Section \ref{section:spaces} contains a characterization of the fixed points and of the DFA as commutants of suitable algebras defined in terms of the Kraus operators of the channel. This can be seen as a discrete time counterpart of \cite{DFR} for quantum dynamical semigroups, where the two spaces are characterized using the Lindblad form of the generator.
The relation between the DFA and the reversible subalgebra is also proved here  (Theorem \ref{thm:cjd}).

Afterward, in Section \ref{section:cycles},  we introduce the study of cycles, using also the notion of period, as introduced in the quantum context in \cite{FP}, and generalizing it to the infinite dimensional case. We start from the irreducible case, where the relation between the DFA, the fixed points of powers of the channel, and the cyclic decomposition is evident and can be clearly described (Corollary \ref{coro:W1} and Proposition \ref{prop:period_irred}). Then we turn to the reducible case, where  
we exploit the fact that the two algebras are atomic,  to deduce ad-hoc decompositions of the invariant states, relations with the Kraus operators, a better description of the conditional expectations and of the cyclic behavior of the channel (Proposition \ref{prop:dfs} and Theorem \ref{prop:fixed_multi}).
These results are strictly related to the studies in \cite{BN}, \cite{DFSU}  and the decomposition appearing in the last part of \cite{WPG}.
Finally, in Section \ref{section:OQRWs}, we apply our results to analyze a remarkable family of quantum channels, i.e. the so called open quantum random walks. This will give us the opportunity to show in details some explicit examples: in the last one, we try to throw a glance to a channel without invariant state. 


\section{Multiplicative domain, { decoherence free algebra } and fixed points} \label{section:spaces}

Let $\Ha$ be a separable Hilbert space. We will denote the algebra of bounded operators over $\Ha$ by $B(\Ha)$, the predual of $B(\Ha)$ by $B(\Ha)_*$  and  the set of normal states on $B(\Ha)$ by $\states(\Ha)$. The predual $B(\Ha)_*$ will be identified with the set of trace-class operators in $B(\Ha)$ and then $\states(\Ha)$ is the set of positive operators with unit trace. The identity operator on $\Ha$ will be sometimes denoted by $I_\Ha$ if the space $\Ha$ has to be emphasized. 

The main object studied in this paper is a unital normal completely positive map $\Phi:B(\Ha)\to B(\Ha)$, such maps are called 
(quantum)  channels. The preadjoint of $\Phi$ is the map $\Phi_*: B(\Ha)_*\to B(\Ha)_*$, defined by 
\[
\Tr [\Phi_*(\rho)A]=\Tr [\rho\Phi(A)],\qquad \rho\in B(\Ha)_*,\ A\in B(\Ha).
\]
The preadjoint of a channel is completely positive and preserves trace.

It is well known that any channel  $\Phi$ has a representation of the form
\begin{equation}\label{eq:kraus}
\Phi(A)=\sum_{k=1}^\infty V_k^*AV_k,\qquad A\in B(\Ha),
\end{equation}
where the Kraus operators $V_k\in B(\Ha)$ are such that $\sum_k V_k^*V_k=I$. Let $\Ka$ be a separable Hilbert space and let $\{e_k\}$ be an orthonormal basis of $\Ka$.
Let us define 
\begin{equation}\label{eq:isom}
V=\sum_k V_k\otimes |e_k\>,
\end{equation}
then $V: \Ha\to \Ha\otimes \Ka$ is an isometry and we obtain the Stinespring representation
\begin{equation}\label{eq:stinespring}
\Phi(A)=V^*(A\otimes I_\Ka)V,\qquad A\in B(\Ha).
\end{equation}

We will consider the following sets of operators:
\begin{itemize}
\item the fixed points' domain
$$\Fe(\Phi):=\{A\in B(\Ha), \Phi(A)=A\};
$$
\item the multiplicative domain
\begin{align}\label{def:multdomain}
\Me(\Phi)&:= \{ A\in B(\Ha), \Phi(A^*A)=\Phi(A)^*\Phi(A), \Phi(AA^*)=\Phi(A)\Phi(A)^*\}\nonumber
;
\end{align}
\item the decoherence free algebra (DFA)
$$\Ne(\Phi):=\cap_n \Me(\Phi^n).
$$
\end{itemize}

Since the map $\Phi$ will be fixed throughout, we will mostly use  the notations $\Me=\Me(\Phi)$, $\Ne=\Ne(\Phi)$ and $\Fe=\Fe(\Phi)$.

We now collect some basic facts about these sets. The proofs are included for the convenience of the reader.

First, notice that the set of fixed points is in general not a subalgebra (in contrast, as we will see, to $\Me$ and $\Ne$). An example can easily be constructed simply using a classical Markov chain with a transient class which can have access to two different positive recurrent classes. For quantum examples and discussion around the characterization of $\Fe$ and the following proposition, see e.g. \cite{AGG} or \cite[Section 3]{BJKW}.

\begin{prop}\label{prop:fixed} $\Fe$ is a von Neumann algebra if and only if it is contained in $\Ne$. 
In this case, we have
\[
\Fe=\{V_j,V_j^*, \ j=1,2\dots\}',
\]
where $\{\ \}'$ denotes the commutant.
\end{prop}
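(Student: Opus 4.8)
The plan relies on the standard bimodule property of the multiplicative domain (Choi): for a unital normal completely positive map, $\Me$ is a von Neumann algebra and $A\in\Me$ forces $\Phi(AB)=\Phi(A)\Phi(B)$ and $\Phi(BA)=\Phi(B)\Phi(A)$ for all $B$, and likewise for each $\Phi^n$ and $\Me(\Phi^n)$. Note first that $\Fe=\ker(\Phi-\mathrm{id})$ is always a weak*-closed self-adjoint operator system containing $I$, since $\Phi$ is normal and *-preserving; thus ``$\Fe$ is a von Neumann algebra'' amounts exactly to closure under products. I would then prove the equivalence directly. If $\Fe\subseteq\Ne$, then for $A,B\in\Fe$ we have $A\in\Ne\subseteq\Me$, so the bimodule property gives $\Phi(AB)=\Phi(A)\Phi(B)=AB$ and hence $AB\in\Fe$. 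Conversely, if $\Fe$ is an algebra and $A\in\Fe$, then $A^*A,AA^*\in\Fe$ are fixed, so for every $n$ both $\Phi^n(A^*A)=A^*A=(\Phi^n(A))^*\Phi^n(A)$ and its $AA^*$ analogue hold; this is precisely the statement $A\in\Me(\Phi^n)$, whence $A\in\Ne$. I expect neither implication to present any difficulty.

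For the identity $\Fe=\{V_j,V_j^*\}'$ under these equivalent hypotheses, one inclusion needs no hypothesis: if $A$ commutes with every $V_k$ and $V_k^*$, then $\Phi(A)=\sum_k V_k^*AV_k=A\sum_k V_k^*V_k=A$, using $\sum_k V_k^*V_k=I$, so $A\in\Fe$. For the reverse inclusion I would pass to the Stinespring isometry $V=\sum_k V_k\otimes|e_k\>$ and exploit the equality case of the Kadison--Schwarz inequality through a defect operator. For $A\in\Fe$ set $W_A:=(A\otimes I_\Ka)V-VA$ (recall $\Phi(A)=A$); a short computation using $V^*V=I$ and $V^*(A\otimes I_\Ka)V=\Phi(A)$ yields $W_A^*W_A=\Phi(A^*A)-\Phi(A)^*\Phi(A)$.

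Here is the crux. Since the hypothesis gives $\Fe\subseteq\Ne\subseteq\Me$, the right-hand side vanishes, so $W_A=0$, i.e. $(A\otimes I_\Ka)V=VA$; expanding both sides in the orthonormal basis $\{e_k\}$ and matching components gives $AV_k=V_kA$ for every $k$. Applying the same argument to $A^*\in\Fe$ produces $A^*V_k=V_kA^*$, equivalently $AV_k^*=V_k^*A$, so $A\in\{V_j,V_j^*\}'$ and the equality follows. The step I expect to be the genuine obstacle is exactly this translation of the operator identity $\Phi(A^*A)=\Phi(A)^*\Phi(A)$ into the commutation relations $AV_k=V_kA$: it is the rigidity of the Kadison--Schwarz equality case, made transparent by $W_A^*W_A=0\Rightarrow W_A=0$. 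Everything else reduces to bookkeeping with the Kraus operators together with the self-adjointness of $\Fe$, which lets commutation with $V_k$ and with $V_k^*$ be read off simultaneously from $A$ and $A^*$.
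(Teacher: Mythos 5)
Your proof is correct and follows essentially the same route as the paper: the key step in both is that for a fixed point $A$ of an algebra-valued $\Fe$, the defect $\Phi(A^*A)-\Phi(A)^*\Phi(A)=\sum_j(V_jA-AV_j)^*(V_jA-AV_j)$ vanishes, forcing $AV_j=V_jA$ (your $W_A^*W_A=0$ with the Stinespring isometry is exactly this sum written compactly), and the first equivalence is the same multiplicative-domain argument the paper leaves as "obvious."
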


\begin{proof} The first statement is quite obvious. Assume now that $\Fe$ is a von Neumann algebra and let $A\in \Fe$. Then
\[
0=\Phi(A^*A)-A^*A= \sum_j(V_jA-AV_j)^*(V_jA-AV_j),
\]
this implies  $AV_j=V_jA$. Similarly, we obtain $AV_j^*=V_j^*A$. It follows that 
$\Fe\subseteq \{V_j,V_j^*, \ j=1,2\dots\}'$.
 The converse inclusion is clear. 
\end{proof}

We point out that when there is a faithful normal invariant state, then $\Fe$ is included in $\Ne$ and so the previous characterization holds.

We now turn to the multiplicative domain $\Me$. It was proved by Choi \cite{choi}  that $\Me$  satisfies the following multiplicative property 
\begin{equation}\label{MultProp}
\Phi(AB)=\Phi(A)\Phi(B),\ \Phi(BA)=\Phi(B)\Phi(A),\ \mbox{for all } A\in B(\Ha) \mbox{ and } B\in \Me.
\end{equation}
Consequently, $\Me$  is a von Neumann subalgebra in $B(\Ha)$ and the restriction of $\Phi$ to $\Me$ is a *-homomorphism. 
 We have the following characterization of $\Me$.

\begin{prop} \label{prop:multiplicative}  Let $V_1,V_2,\dots$ be Kraus operators as in \eqref{eq:kraus}.Then 
\[
\Me=\{ V_jV_k^*,\ j,k =1,2,\dots\}'.
\]

\end{prop}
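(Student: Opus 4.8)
The plan is to realise everything inside the Stinespring representation \eqref{eq:stinespring} and to reduce the two defining equalities of $\Me$ to a single commutation relation for the range projection $P:=VV^*$ of the isometry $V$. The guiding idea is that $\Phi$ is multiplicative exactly on those $A$ for which $A\otimes I_\Ka$ preserves the Stinespring subspace $\mathrm{Ran}(V)$, and that this projection is built precisely from the operators $V_jV_k^*$.

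First I would rewrite the Kadison--Schwarz defect. Using only $\Phi(B)=V^*(B\otimes I_\Ka)V$ and $P=VV^*$, a direct computation gives
\[
\Phi(A^*A)-\Phi(A)^*\Phi(A)=V^*(A^*\otimes I_\Ka)(I-P)(A\otimes I_\Ka)V .
\]
Since $I-P$ is a projection, the right-hand side equals $X^*X$ with $X=(I-P)(A\otimes I_\Ka)V$, so it vanishes if and only if $X=0$, that is, $(I-P)(A\otimes I_\Ka)V=0$. Applying the same identity to $A^*$, the second defining equality $\Phi(AA^*)=\Phi(A)\Phi(A)^*$ holds if and only if $(I-P)(A^*\otimes I_\Ka)V=0$. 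Thus membership in $\Me$ is equivalent to the two range conditions $(I-P)(A\otimes I_\Ka)V=0$ and $(I-P)(A^*\otimes I_\Ka)V=0$.

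Next I would translate these into commutation with $P$. Because $V$ is an isometry, $\mathrm{Ran}(P)=\mathrm{Ran}(V)$ is closed and $V=PV$, so the first condition says exactly that $\mathrm{Ran}(P)$ is invariant under $A\otimes I_\Ka$, i.e.\ $(A\otimes I_\Ka)P=P(A\otimes I_\Ka)P$, and the second says $\mathrm{Ran}(P)$ is invariant under $A^*\otimes I_\Ka$, which upon taking adjoints reads $P(A\otimes I_\Ka)=P(A\otimes I_\Ka)P$. Combining the two yields $(A\otimes I_\Ka)P=P(A\otimes I_\Ka)$, and conversely this commutation gives back both range conditions. Hence $A\in\Me$ if and only if $A\otimes I_\Ka$ commutes with $P$. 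Finally I would expand $P$ using \eqref{eq:isom}, namely
\[
P=VV^*=\sum_{j,k}V_jV_k^*\otimes\ketbra{e_j}{e_k},
\]
so that
\[
[A\otimes I_\Ka,\,P]=\sum_{j,k}\bigl(AV_jV_k^*-V_jV_k^*A\bigr)\otimes\ketbra{e_j}{e_k}.
\]
Pairing this against vectors $\eta\otimes e_i$ and $\xi\otimes e_k$ separates the terms, since the $\ketbra{e_j}{e_k}$ are the matrix units of $B(\Ka)$, and shows the commutator vanishes if and only if $AV_jV_k^*=V_jV_k^*A$ for all $j,k$, which is the asserted equality $\Me=\{V_jV_k^*\}'$.

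I expect the only delicate point to be the third step, the passage from invariance of $\mathrm{Ran}(P)$ to genuine commutation with $P$: one must use \emph{both} defining equalities of $\Me$ (the one for $A^*A$ alone only gives that $\mathrm{Ran}(P)$ is invariant, not reducing, for $A\otimes I_\Ka$). A secondary technical care, harmless in this setting, is justifying the term-by-term vanishing of the infinite sum for the commutator, which is immediate once one evaluates it on the elementary tensors above.
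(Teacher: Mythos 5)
Your proposal is correct and follows essentially the same route as the paper: both pass to the Stinespring representation, show that $A\in\Me$ is equivalent to $A\otimes I_\Ka$ commuting with the range projection $P=VV^*$ (using both defining equalities of $\Me$ to upgrade invariance of $\mathrm{Ran}(P)$ to genuine commutation), and then expand $P=\sum_{j,k}V_jV_k^*\otimes\ketbra{e_j}{e_k}$ to extract the commutant condition. Your write-up is merely a bit more explicit in exhibiting the Kadison--Schwarz defect as $X^*X$ with $X=(I-P)(A\otimes I_\Ka)V$, whereas the paper sandwiches by $P$ first; the two are equivalent since $\mathrm{Ran}(V)=\mathrm{Ran}(P)$.
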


\begin{proof} It will be convenient to use the Stinespring representation \eqref{eq:stinespring}. Let $V$ be as in \eqref{eq:isom} and  let $P=VV^*$, then $P\in B(\Ha\otimes \mathcal K)$ is a projection and we have $A\in\Me$ if and only if $A\otimes I$ commutes with $P$. Indeed, suppose $A\in \Me$, then 
\[
V^*(A^*A\otimes I)V=V^*(A^*\otimes I)P(A\otimes I)V.
\]
 It follows that $P(A^*\otimes I)(1-P)(A\otimes I)P=0$, hence $(1-P)(A\otimes I)P=0$, so that 
\[
(A\otimes I)P=P(A\otimes I)P.
\]
 Similarly, we get the same for $A^*$ and this implies that
\[
P(A\otimes I)=P(A\otimes I)P=(A\otimes I)P.
\]
The converse is easy. Now notice that  $P=\sum_{j,k}V_jV_k^*\otimes |e_j\>\<e_k|$, this implies the statement.

\end{proof}

It is clear from the definition that the DFA  $\Ne$ is a von Neumann subalgebra as well and it is also easy to see that $\Ne$ is the smallest subalgebra such that the restriction $\Phi|_\Ne$ is a *-endomorphism.

 \begin{rem}
 Notice that $\Phi|_{\Ne}$ is not
 always a *-automorphism. Indeed, $\Ne$ can have, for instance, a non-trivial intersection with the kernel of $\Phi$. 
 Since this intersection is a subalgebra, it then contains a nonzero projection $0\ne P\in \mbox{Ker} (\Phi)\cap \Ne$.
 On the other hand, any projection in $\mbox{Ker} (\Phi)$ is necessarily in $\Ne$, so that this happens if and only if  $\Phi$ is not faithful.

 \end{rem}

\begin{prop}\label{prop:dfageneral} We have the following characterizations of $\Ne$:
\begin{enumerate}
\item[(i)] $\Ne=\{V_{i_1}\dots V_{i_n}V_{j_1}^*\dots V_{j_n}^*,\ i_k,j_k=1,2,\dots; \ n\in \mathbb N\}'$.
\item[(ii)]  $\Ne$ is  the von Neumann algebra generated by the preserved projections, i.e. by the set
$$
 \{Q\in B(\Ha) \,:\,  \Phi^n(Q) \mbox{ is a projection }\forall \, n\ge 0 \}.
$$ 
\end{enumerate}
\end{prop}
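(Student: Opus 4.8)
The plan is to reduce both statements to the characterization of the multiplicative domain in Proposition~\ref{prop:multiplicative}, applied to the iterates $\Phi^n$.

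For (i), I would first determine a Kraus representation of $\Phi^n$. Starting from \eqref{eq:kraus}, a direct induction gives
\[
\Phi^n(A)=\sum_{i_1,\dots,i_n}(V_{i_1}\cdots V_{i_n})^*\,A\,(V_{i_1}\cdots V_{i_n}),\qquad A\in B(\Ha),
\]
so that the operators $W_{(i_1,\dots,i_n)}:=V_{i_1}\cdots V_{i_n}$ form a Kraus family for $\Phi^n$ (the normalization $\sum W^*W=\Phi^n(I)=I$ being automatic from unitality). Applying Proposition~\ref{prop:multiplicative} to $\Phi^n$ then yields
\[
\Me(\Phi^n)=\{W_{(i_1,\dots,i_n)}W_{(j_1,\dots,j_n)}^*\}'=\{V_{i_1}\cdots V_{i_n}V_{j_n}^*\cdots V_{j_1}^*\}',
\]
and since the commutant is taken over all multi-indices, relabelling the $j$'s shows this generating set coincides with $\{V_{i_1}\cdots V_{i_n}V_{j_1}^*\cdots V_{j_n}^*\}$. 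Finally, because an intersection of commutants is the commutant of the union of the generating sets,
\[
\Ne=\bigcap_n\Me(\Phi^n)=\Big(\bigcup_n\{V_{i_1}\cdots V_{i_n}V_{j_1}^*\cdots V_{j_n}^*\}\Big)',
\]
which is exactly the right-hand side of (i).

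For (ii), the key observation is a clean reformulation of membership in $\Me(\Phi^n)$ for projections. If $Q$ is a projection, then $Q^*Q=QQ^*=Q$ and, since $\Phi^n$ is positive and unital (hence $*$-preserving), $\Phi^n(Q)$ is self-adjoint. Unwinding the definition \eqref{def:multdomain} for $\Phi^n$, both defining identities collapse to the single condition $\Phi^n(Q)=\Phi^n(Q)^2$; that is, $Q\in\Me(\Phi^n)$ if and only if $\Phi^n(Q)$ is a projection. Intersecting over all $n$ (the case $n=0$ being just the assumption that $Q$ itself is a projection), this says precisely that a projection $Q$ is \emph{preserved} in the sense of (ii) if and only if $Q\in\Ne$. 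In other words, the preserved projections are exactly the projections of the von Neumann algebra $\Ne$.

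It then remains to pass from projections to the whole algebra. Since $\Ne$ is a von Neumann algebra, it is generated (as a von Neumann algebra) by its projections; as these are exactly the preserved projections, $\Ne$ is contained in the von Neumann algebra they generate. The reverse inclusion is immediate, because every preserved projection lies in $\Ne$. This gives (ii). I expect the only point needing care to be the bookkeeping in (i)---the order of the factors in the Kraus operators of $\Phi^n$ and the harmless relabelling of indices inside the commutant---while the rest is a direct unwinding of the definitions.
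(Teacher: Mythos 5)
Your proposal is correct and follows exactly the route the paper intends: part (i) is Proposition \ref{prop:multiplicative} applied to the Kraus family $V_{i_1}\cdots V_{i_n}$ of $\Phi^n$ (plus the standard identity that an intersection of commutants is the commutant of the union), and part (ii) is the observation that a projection $Q$ lies in $\Me(\Phi^n)$ iff $\Phi^n(Q)$ is a projection, so the preserved projections are precisely the projections of $\Ne$, which generate it. The paper states both steps in one line each; your write-up just supplies the bookkeeping.
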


\begin{proof}
(i) is immediate from Proposition \ref{prop:multiplicative}. (ii) holds since a projection  $Q$ is in  $\Me$ if and only if $\Phi(Q)$ is again a  projection.
\end{proof}

Point (ii) already appeared in \cite{CSU} (see also references therein) and was the original representation of the decoherence free algebra used in \cite{BO} when introducing environmental decoherence.

The following results are well known.

\begin{prop}\label{prop:faithful_invariant} Assume that there is a faithful normal invariant state for $\Phi$. Then
\begin{enumerate}
\item[(i)] $\Fe$ is a von Neumann subalgebra.
\item[(ii)] The restriction $\Phi|_{\Ne}$ is a *-automorphism.
\end{enumerate}
\end{prop}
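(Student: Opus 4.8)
The plan is to treat the two statements separately: part (i) will follow from Proposition \ref{prop:fixed} once I show $\Fe\subseteq\Ne$, while part (ii) splits into injectivity, a quick consequence of the faithfulness of $\Phi$, and surjectivity, which is the genuinely delicate point. Throughout, write $\rho$ for the faithful normal invariant state and $\omega=\Tr[\rho\,\cdot\,]$, so that $\Tr[\rho\,\Phi(A)]=\Tr[\Phi_*(\rho)A]=\Tr[\rho A]$ for all $A$.

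For (i), fix $A\in\Fe$ and $n\ge 1$. Since $\Phi^n(A)=A$ and $\rho$ is invariant for $\Phi^n$ as well, the Kadison--Schwarz inequality for the completely positive map $\Phi^n$ produces a positive defect
\[
D_n:=\Phi^n(A^*A)-\Phi^n(A)^*\Phi^n(A)=\Phi^n(A^*A)-A^*A\ge 0 .
\]
Pairing with the invariant state annihilates it, $\Tr[\rho D_n]=\Tr[\rho\,\Phi^n(A^*A)]-\Tr[\rho\,A^*A]=0$, and faithfulness of $\rho$ forces $D_n=0$; the analogous computation with $AA^*$ then gives $A\in\Me(\Phi^n)$. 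As $n$ is arbitrary, $A\in\Ne$, so $\Fe\subseteq\Ne$ and Proposition \ref{prop:fixed} yields that $\Fe$ is a von Neumann subalgebra.

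For (ii), I would first record that $\Phi$ is faithful: if $A\ge 0$ and $\Phi(A)=0$, then $\Tr[\rho A]=\Tr[\rho\,\Phi(A)]=0$, hence $A=0$. Since $\Phi|_\Ne$ is already known to be a normal unital $*$-endomorphism, faithfulness makes it injective, because $\Phi(A)=0$ with $A\in\Ne\subseteq\Me$ gives $\Phi(A^*A)=\Phi(A)^*\Phi(A)=0$ and therefore $A^*A=0$. It remains to prove surjectivity, $\Phi(\Ne)=\Ne$, and this is where the main obstacle lies. A convenient reformulation uses the standard form of $(\Ne,\omega)$: because $\Phi$ preserves $\omega$ and acts multiplicatively on $\Ne$, the prescription $U\,\pi(A)\xi=\pi(\Phi(A))\xi$ defines an isometry $U$ on $L^2(\Ne,\omega)$ fixing the cyclic vector $\xi$, and $\Phi|_\Ne$ is onto exactly when $U$ is unitary, i.e. when $U$ has no proper isometric (shift-type) part.

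The hard part is precisely this exclusion. It cannot follow from $\omega$-invariance alone: a non-surjective normal unital $*$-endomorphism can preserve a faithful normal state, as the tensor shift on the hyperfinite factor shows, and one checks that such a shift satisfies every property established above (injective, $\omega$-preserving, even admitting a left-inverse channel) while failing to be surjective. Hence the argument must genuinely use that the ambient algebra $B(\Ha)$ is atomic. I would close it through the Perron--Frobenius/Jacobs--de Leeuw--Glicksberg spectral structure available under a faithful invariant state (Groh, B\'atkai et al.), showing that no nonzero element of $\Ne$ lies in the stable subspace, equivalently that $U$ reduces to its unitary part; this is the step where the real content sits, and it is close in spirit to the paper's subsequent identification of $\Ne$ with the reversible subalgebra. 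In short, I expect part (i) and the injectivity in (ii) to be routine, and the unitarity of $U$ (the surjectivity of $\Phi|_\Ne$) to be the crux.
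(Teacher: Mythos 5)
Your part (i) is correct and complete: the Schwarz--inequality--plus--faithfulness argument showing $\Fe\subseteq\Me(\Phi^n)$ for every $n$, followed by an appeal to Proposition \ref{prop:fixed}, is exactly the standard route (the paper records this inclusion in the remark after Proposition \ref{prop:fixed} and, for the proposition itself, offers only a citation to \cite{robinson} rather than a proof). The injectivity half of (ii) is also fine. The genuine gap is the one you flag yourself: surjectivity of $\Phi|_{\Ne}$ is reduced to the unitarity of the isometry $U$ on $L^2(\Ne,\omega)$ and then left unproved. Your observation that this cannot follow from the properties established so far --- the tensor shift on the hyperfinite ${\rm II}_1$ factor is an injective, non-surjective, trace-preserving unital normal $*$-endomorphism whose DFA is the whole algebra --- is correct and valuable, since it shows that the atomicity of $B(\Ha)$ must enter somewhere; note also that at this stage of the paper one does not yet know that $\Ne$ itself is atomic (that only comes via Corollary \ref{coro:range}), so the shift obstruction is not obviously excluded for $\Ne$. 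Identifying where the difficulty sits is not the same as resolving it, and as written statement (ii) is not established.

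Moreover, the route you propose for closing the gap is circular relative to this paper's architecture. You would deduce unitarity of $U$ by showing that no nonzero element of $\Ne$ lies in the stable subspace, i.e.\ essentially that $\Ne\subseteq\Me_r$. But that inclusion is the content of Theorem \ref{thm:cjd}, whose proof uses Proposition \ref{prop:faithful_invariant}(ii): the chain $\mathcal R_1\subseteq\Ne_1\subseteq\bigcap_n\Phi^n(\mathcal B_1)$ needs $\Phi|_{\Ne}$ to be onto. An independent proof of $\Ne\subseteq\Me_r$ is not routine either --- the naive limit argument fails because multiplication is not jointly weak*-continuous on bounded sets. So to make your plan work you would need either to reproduce the argument from \cite{robinson} (which is what the paper implicitly relies on) or to prove directly that, in the present atomic setting, a normal unital $*$-endomorphism of $\Ne$ preserving a faithful normal state is automatically surjective; neither is done in the proposal.
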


\begin{proof} See e.g.  \cite{robinson} and references therein.  
\end{proof}

\subsection{Maps with a faithful invariant state}

In this section, we assume that  there is a faithful normal state $\rho\in \states(\Ha)$ for $\Phi$. 
In this case, there is another special subalgebra investigated in the literature, e.g. \cite{BatkaiEtAl, Groh84}, appearing in some asymptotic splitting, usually called the {\it reversible subalgebra} and denoted by ${\mathcal M}_r$. 
We describe the reversible subalgebra, following \cite{BatkaiEtAl}, \cite{Groh84} or \cite{Hellmich}.  Let $\mathbf S$ be the closure of the semigroup of channels  $\{\Phi^n,\ n\in \mathbb N\}$  in the point-ultraweak topology and define
$$
\Me_r:=\{X\in B(\Ha),\ T(X)^*T(X)=T(X^*X),\forall T\in {\mathbf S}\}.
$$

We will show below, in Theorem \ref{thm:cjd}, that the equality $\mathcal M_r= \Ne$ holds for channels on $B(\Ha)$ (or more generally on atomic von Neumann algebras). 

Due to the presence of a faithful normal invariant state, for any $\varphi\in B(\Ha)_*$, the set
 \[
\{\Phi_*^n\varphi, \ n\in \mathbb N\}
\]
is weakly relatively compact, equivalently, the set  $\mathbf S$ consists of normal operators  and is a compact semitopological 
semigroup (\cite[Proposition 2.1]{Groh84}). Further,  ${\mathbf S}$ contains a minimal ideal $M({\mathbf S})$ which is a compact topological group. Let $F$ be the unit of this group, then $M({\mathbf S})=F\circ {\mathbf S}$ and $F$ is a normal conditional expectation preserving the invariant state $\rho$ such that  $T F=F T$ for all $T\in {\mathbf S}$. Finally, $\Me_r$ is a von Neumann algebra 
and the minimal ideal $M({\mathbf S})$ acts as a compact group of *-automorphisms on $\Me_r$ (\cite[Theorem 1.2 and Corollary 1.3]{BatkaiEtAl}). 

This last fact trivially implies, in particular, that $\mathcal M_r\subseteq \Ne$ and that equality holds in finite dimension, but the infinite dimensional case is quite more delicate and tricky.

Now let $X\in B(\Ha)$ and let $O_0(X):=\{\Phi^k(X), k\in \mathbb N\}$ be the orbit of $X$ under $\{\Phi^k\}_{k\ge 0}$. Then 
the weak*-closure $\bar O_0(X)$ is the orbit of $X$ under $\mathbf S$, 
\[
\bar O_0(X)=O(X):=\{T(X), T\in {\mathbf S}\}
\]
and we can define the {\it stable subspace} as 
$$
\Me_s:=\{X\in B(\Ha),\ 0\in O(X)\}.
$$

The following lemma can be deduced from  \cite[Theorem 2.1]{Hellmich}, \cite[Theorem 1.2]{BatkaiEtAl} and 
  \cite[Proposition 2.2]{Groh84}.  Since we did not find an explicit and 
comprehensive statement in the literature, we reconstruct here the detailed result that we need and the proof for the convenience of the reader.

\begin{lemma}
Let $F$ be the normal conditional expectation introduced before.
\begin{enumerate}
\item $\Me_r=F(B(\Ha))=\overline{\mathrm{span}\{X\in B(\Ha),\ \Phi(X)=\lambda X,\ |\lambda|=1\}}^{w*}$;
\item $\Me_s=Ker (F)$.
\end{enumerate}
\end{lemma}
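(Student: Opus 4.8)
The plan is to prove the two items by relating three descriptions of the ``reversible'' part: the algebraic domain $\Me_r$, the range $F(B(\Ha))$, and the closed span of unimodular eigenvectors, which I abbreviate by $\mathcal P:=\overline{\mathrm{span}\{X\in B(\Ha):\ \Phi(X)=\lambda X,\ |\lambda|=1\}}^{w*}$. For item (1) I would establish the cycle of inclusions $\mathcal P\subseteq\Me_r\subseteq F(B(\Ha))\subseteq\mathcal P$, and for item (2) I would argue directly from the group structure of the minimal ideal $M(\mathbf S)$.

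First the two elementary inclusions. For $\Me_r\subseteq F(B(\Ha))$: since $F$ is idempotent ($F^2=F$) and is the unit of the group $M(\mathbf S)$, and $M(\mathbf S)$ is given to act on $\Me_r$ by $*$-automorphisms, the unit $F$ must act as the identity automorphism of $\Me_r$; hence $F(X)=X$ for every $X\in\Me_r$, i.e. $X\in F(B(\Ha))$. For $\mathcal P\subseteq\Me_r$: I would first show that any unimodular eigenvector $X$, $\Phi(X)=\lambda X$, satisfies $\Phi(X^*X)=X^*X$ and $\Phi(XX^*)=XX^*$. Indeed, the Schwarz inequality gives $\Phi(X^*X)\ge\Phi(X)^*\Phi(X)=X^*X$, and pairing the nonnegative difference with the faithful invariant state $\rho$ yields, by invariance, $\Tr[\rho(\Phi(X^*X)-X^*X)]=0$, so faithfulness forces equality (and likewise for $XX^*$). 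Consequently $X^*X$ and $XX^*$ are fixed by every power $\Phi^n$. Writing an arbitrary $T\in\mathbf S$ as a point-ultraweak limit of a subnet $\Phi^{n_\alpha}$, and passing to a further subnet with $\lambda^{n_\alpha}\to\mu$, $|\mu|=1$, I obtain $T(X)=\mu X$ while $T(X^*X)=X^*X$, whence $T(X)^*T(X)=X^*X=T(X^*X)$ and similarly for $XX^*$; thus $X\in\Me_r$. Since $\Me_r$ is a von Neumann algebra, it is a $w*$-closed subspace, so it contains $\mathcal P$.

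For item (2) I note first that $O(X)=\{T(X):T\in\mathbf S\}$ is the image of the compact semigroup $\mathbf S$ under the ultraweakly continuous evaluation $T\mapsto T(X)$, hence compact; so $X\in\Me_s$ exactly when $T(X)=0$ for some $T\in\mathbf S$. If $F(X)=0$, then taking $T=F\in M(\mathbf S)\subseteq\mathbf S$ shows $0\in O(X)$, giving $\mathrm{Ker}(F)\subseteq\Me_s$. Conversely, if $T(X)=0$ for some $T\in\mathbf S$, then $FT\in F\mathbf S=M(\mathbf S)$ lies in the group, so it has an inverse $g\in M(\mathbf S)$ with $g\circ(FT)=F$; applying this to $X$ gives $F(X)=g\big(F(T(X))\big)=g(0)=0$, so $\Me_s\subseteq\mathrm{Ker}(F)$. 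This proves $\Me_s=\mathrm{Ker}(F)$.

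It remains to close the cycle with $F(B(\Ha))\subseteq\mathcal P$, which I expect to be the main obstacle, since it carries the genuine Jacobs--de Leeuw--Glicksberg content. The idea is to restrict the dynamics to the $\Phi$-invariant von Neumann algebra $F(B(\Ha))$ (invariant because $\Phi F=F\Phi$), on which $\{T|_{F(B(\Ha))}:T\in\mathbf S\}=\{g|_{F(B(\Ha))}:g\in M(\mathbf S)\}$ is a compact group of positive unital maps with $\Phi|_{F(B(\Ha))}$ among its elements. Passing to the GNS Hilbert space of $\rho$, on which each $T$ induces a contraction and the group acts by unitaries, I would decompose the reversible part into the joint eigenspaces of the compact abelian group generated by $\Phi$ via Peter--Weyl/Stone, so that each such eigenvector is a unimodular eigenvector of $\Phi$ and these span $F(B(\Ha))$ in the $w*$-topology; this is precisely the peripheral-spectrum analysis of \cite{Groh84} and \cite[Thm 1.2]{BatkaiEtAl}, which I would invoke to conclude $F(B(\Ha))\subseteq\mathcal P$. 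Combined with the two inclusions above, all three spaces coincide, completing item (1). The delicate points to watch are the descent from the Hilbert-space projection back to the normal conditional expectation $F$ on $B(\Ha)$, and the verification that the unimodular eigenvectors indeed lie in $B(\Ha)$ rather than only in the completion.
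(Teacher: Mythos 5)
Your treatment of item (2) and of the two inclusions $\Me_r\subseteq F(B(\Ha))$ and $\overline{\mathrm{span}}\{\text{peripheral eigenvectors}\}\subseteq\Me_r$ matches the paper's proof in substance (for the first of these the paper argues slightly more self-containedly: $F\in\mathbf S$ gives $F(X^*X)=F(X)^*F(X)$ for $X\in\Me_r$, hence $F\bigl((X-F(X))^*(X-F(X))\bigr)=0$ and faithfulness of $F$ forces $X=F(X)$; your appeal to the group of $*$-automorphisms acting on $\Me_r$ with unit $F$ is also legitimate, since an idempotent automorphism is the identity). The problem is the third inclusion, $F(B(\Ha))\subseteq\overline{\mathrm{span}\{X:\Phi(X)=\lambda X,\ |\lambda|=1\}}^{w*}$, which you correctly identify as carrying the whole Jacobs--de~Leeuw--Glicksberg content and then do not prove: you sketch a GNS/Peter--Weyl route and propose to ``invoke'' \cite{Groh84} and \cite[Theorem 1.2]{BatkaiEtAl}, while yourself flagging as unresolved exactly the two points on which that route can founder --- whether the $L^2(\rho)$-eigenvectors descend to elements of $B(\Ha)$, and whether $L^2$-density upgrades to $w*$-density in $F(B(\Ha))$. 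Since the lemma is precisely the statement the paper reconstructs because no explicit reference covers it, deferring this inclusion to the literature leaves a genuine gap.

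The paper closes this gap without ever leaving $B(\Ha)$. For $X=F(X)$ and a character $\chi$ of the compact abelian group $M(\mathbf S)$, it forms the weak*-integral $X_\chi=\int_{M(\mathbf S)}\overline{\chi(T)}\,T(X)\,d\mu(T)$ against normalized Haar measure; this is manifestly an element of $B(\Ha)$, and using $T=TF=FT$ for $T\in M(\mathbf S)$ one computes $\Phi(X_\chi)=\chi(F\Phi)X_\chi$ with $|\chi(F\Phi)|=1$, so each $X_\chi$ is a peripheral eigenvector. The density issue is then handled by duality rather than in $L^2$: if $\psi\in B(\Ha)_*$ annihilates every peripheral eigenvector, it annihilates every $X_\chi$, and since the characters span $L^2(M(\mathbf S))$ and $T\mapsto\Tr[\psi T(X)]$ is continuous, one gets $\Tr[\psi T(X)]=0$ for all $T\in M(\mathbf S)$, in particular $\Tr[\psi F(X)]=\Tr[\psi X]=0$; the bipolar theorem then yields $F(B(\Ha))\subseteq\overline{\mathrm{span}}^{w*}$ of the peripheral eigenvectors. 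If you want to salvage your outline, replacing the GNS step by this averaging-plus-Hahn--Banach argument is the missing idea.
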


\begin{proof} 
1. Let us denote the last set on the RHS by $\Me_0$.
We will prove the chain of inclusions 
\[
\Me_r\subseteq F(B(\Ha))\subseteq \Me_0\subseteq \Me_r.
\]

First, if $X\in \Me_r$, then since $F\in {\mathbf S}$, we have $F(X^*X)=F(X)^*F(X)$. This implies 
\[
F((X-F(X))^*(X-F(X)))=0
\]
and since $F$ is faithful, we have $X=F(X)\in F(B(\Ha))$.

To prove the second inclusion, let $X=F(X)$. Let $\widehat{M({\mathbf S})}$ be the dual group and let $\chi\in \widehat{M({\mathbf S})}$. Let us define
\[
X_\chi:= \int_{M({\mathbf S})}\overline{\chi(T)}T(X)d\mu(T),
\]
where $\mu$ is the normalized Haar measure over $M({\mathbf S})$. The integral is defined in the weak*-topology, 
so we have $X_\chi\in B(\Ha)$ and  since for $T\in M({\mathbf S})$, we have $T=TF=FT$, we obtain 
\begin{align*}
\Phi(X_\chi)&=\int_{M({\mathbf S})}\overline{\chi(T)}\Phi  T(X)d\mu(T)=\int_{M({\mathbf S})}\overline{\chi(T)}F \Phi  T(X)d\mu(T)\\
&=
\chi(F\Phi )\int_{M({\mathbf S})}\overline{\chi( T)} T(X)d\mu(T)=\chi(F\Phi) X_\chi
\end{align*}
(since $F\Phi=\Phi F\in M({\mathbf S})$), so that $X_\chi\in \Me_0$.
Let now $\psi\in B(\Ha)_*$ be such that $\Tr [\psi Y]=0$ for any peripheral eigenvector $Y$ of $\Phi$. Then
\[
0=\Tr [\psi X_\chi]=\int_{M({\mathbf S})}\overline{\chi(T)}\Tr [\psi T(X)]d\mu(T), \qquad \forall \chi\in \widehat{M({\mathbf S})}.
\]
 Since the characters span the space of square integrable functions on $M({\mathbf S})$ and the function 
 $T\mapsto \Tr [\psi T(X)]$ is continuous, it follows that $\Tr [\psi T(X)]=0$ for all $T\in M({\mathbf S})$, in particular, 
 $\Tr [\psi X]=\Tr [\psi F(X)]=0$.
This implies that $\{Y,\ \Phi(Y)=\lambda Y,\ |\lambda|=1\}^\perp\subseteq F(B(\Ha))^\perp$, so that
\[
F(B(\Ha))=F(B(\Ha))^{\perp\perp}\subseteq \{Y,\ \Phi(Y)=\lambda Y,\ |\lambda|=1\}^{\perp\perp}=\Me_0.
\]

Finally, let $\Phi(X)=\lambda X$ for some $|\lambda|=1$, then $\Phi^k(X)=\lambda^k X$
 for all $k\in \mathbb N$. Let $S\in {\mathbf S}$, then there is a net $\Phi^{n_\alpha}\to S$,
 so that $S(X)=\lim \Phi^{n_\alpha}(X)=\lim \lambda^{n_\alpha} X$, hence  $S(X)=\mu X$  with $\mu=\lim \lambda^{n_\alpha}$.  By Schwartz inequality, $S(X^*X)\ge S(X)^*S(X)=X^*X$ and applying the faithful normal invariant state $\rho$ we obtain $S(X^*X)=X^*X$, so that $X\in \Me_r$. This proves the last of the above chain of inclusions.

2. Since $F\in M({\mathbf S})$, we clearly have ${\rm Ker} F\subseteq \Me_s$. Conversely, let $X\in  \Me_s$ and let 
$S\in {\mathbf S}$ be such that $S(X)=0$. Since $FS\in M({\mathbf S})$, there is some $T\in M({\mathbf S})$ such that $TFS=F$, so that we have
\[
F(X)=TFS(X)=0.
\]
This concludes the proof.
\end{proof}

 

We will now prove the main result of this section.

\begin{thm}\label{thm:cjd} Assume that a quantum channel $\Phi:B(\Ha)\to B(\Ha)$ admits a faithful normal invariant state $\rho$. Then 
$\Ne=\Me_r$.

\end{thm}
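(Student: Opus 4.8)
The plan is to prove the two inclusions separately, the first being routine and the second being the substance of the theorem. The inclusion $\Me_r \subseteq \Ne$ is already available: if $X \in \Me_r$ then $T(X^*X) = T(X)^*T(X)$ for every $T \in \mathbf S$, in particular for $T = \Phi^n$, and since $\Me_r$ is a $*$-algebra the companion identity for $XX^*$ holds as well, so $X \in \Me(\Phi^n)$ for every $n$ and hence $X \in \Ne$. For the reverse inclusion I would first reduce to a statement about the stable subspace. By the Lemma, $F$ is a faithful normal conditional expectation with range $\Me_r$, so $B(\Ha) = \Me_r \oplus \Me_s$ with $\Me_s = \mathrm{Ker}\, F$. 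Given $X \in \Ne$, write $X = F(X) + (X - F(X))$; since $F(X) \in \Me_r \subseteq \Ne$, the remainder $Y := X - F(X)$ lies in $\Ne \cap \Me_s$. Thus everything reduces to proving $\Ne \cap \Me_s = \{0\}$.

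To analyze $\Ne \cap \Me_s$ I would pass to the GNS space of $\rho$, realized as the Hilbert--Schmidt class $L^2 := HS(\Ha)$ via $\iota(A) = A\rho^{1/2}$, with $\langle \iota(A), \iota(B)\rangle = \Tr[\rho A^*B]$. Invariance of $\rho$ together with the Schwarz inequality shows that $T_\rho\,\iota(A) := \iota(\Phi(A))$ extends to a contraction on $L^2$. On the subspace $\mathcal H_\Ne := \overline{\iota(\Ne)}$ the map $T_\rho$ restricts to a \emph{unitary} $U$: by Proposition \ref{prop:faithful_invariant}, $\Phi|_\Ne$ is a $\rho$-preserving $*$-automorphism, so $\|\iota(\Phi(X))\| = \|\iota(X)\|$ for $X \in \Ne$ (using $\Phi(X^*X) = \Phi(X)^*\Phi(X)$) and $T_\rho(\mathcal H_\Ne) = \mathcal H_\Ne$. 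Because every bounded set in a Hilbert space is relatively weakly compact, the de Leeuw--Glicksberg theory applies to the single contraction $T_\rho$ and splits $L^2 = L^2_{ap} \oplus L^2_{fl}$, where $L^2_{ap}$ is the closed linear span of the unimodular eigenvectors of $T_\rho$ and $L^2_{fl}$ consists of the flight vectors. A direct comparison with the Lemma identifies $\overline{\iota(\Me_r)} = L^2_{ap}$ and $\overline{\iota(\Me_s)} = L^2_{fl}$. Hence $Y \in \Ne \cap \Me_s$ gives $\iota(Y) \in \mathcal H_\Ne \cap L^2_{fl}$, i.e. a flight vector of the unitary $U$, equivalently a vector in its continuous spectral subspace. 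The claim $\Ne = \Me_r$ is therefore equivalent to $U$ having pure point spectrum.

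The heart of the matter is to prove that $U$ has pure point spectrum, and this is exactly where atomicity of $B(\Ha)$ must be used; for a diffuse algebra the statement fails, since a weakly mixing $\rho$-preserving automorphism would give $\Me_r = \mathbb C I \subsetneq \Ne$. Write the faithful normal invariant density as $\rho = \sum_k p_k P_k$ with $p_k > 0$ and finite-dimensional spectral projections $P_k$, $\sum_k p_k \dim P_k = 1$. The decisive consequence of atomicity is that each order interval $[0, C\rho]$ is \emph{norm} compact in the trace class (this is false in the diffuse case, where such intervals are only weakly compact). For $X \in \Ne$ the multiplicativity $\Phi^n(X^*X) = \Phi^n(X)^*\Phi^n(X)$ together with invariance of $\rho$ give
\[
\iota(\Phi^n(X))^*\,\iota(\Phi^n(X)) = \rho^{1/2}\Phi^n(X^*X)\rho^{1/2} \le \|X\|^2\,\rho,
\]
so, with $Q_N := \sum_{k\le N}P_k \uparrow I$, the orbit $\{U^n\iota(X)\}$ is uniformly tight on one side, $\sup_n \|U^n\iota(X)\,(I-Q_N)\| \to 0$. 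Running the same estimate for $X^* \in \Ne$ produces the corresponding control involving $\Phi^n(XX^*)$.

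The main obstacle is to upgrade this one-sided tightness to genuine relative norm compactness of the orbit $\{U^n\iota(X)\}$: one-sided tightness alone does not exclude the ``escape to infinity'' characteristic of continuous spectrum, the model being $\iota(X_n) = \ketbra{u_n}{f}$ with $u_n \to 0$ weakly, which is one-sidedly tight yet not norm convergent. To close this gap I would exploit the modular structure carried by the GNS construction: $U$ fixes $\Omega = \rho^{1/2}$ and commutes with the modular conjugation $J$ and the modular operator $\Delta$ of $(\Ne,\rho)$, and $J$ interchanges the two tightness directions; combined with the one-sided tightness of the orbits of both $X$ and $X^*$, this should force two-sided tightness, hence relative norm compactness of every orbit. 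A unitary all of whose orbits are relatively norm compact is almost periodic, and therefore has pure point spectrum; this gives $\mathcal H_\Ne \subseteq L^2_{ap}$ and thus $\mathcal H_\Ne \cap L^2_{fl} = \{0\}$. Consequently $\iota(Y) = 0$, and since $\rho^{1/2}$ has dense range this yields $Y = 0$. Therefore $\Ne \cap \Me_s = \{0\}$ and $\Ne = \Me_r$.
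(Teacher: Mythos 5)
Your route is genuinely different from the paper's (which stays in $B(\Ha)$ and its predual and runs a Hahn--Banach separation argument on unit balls, using $\Ne_1\subseteq\bigcap_n\Phi^n(\mathcal B_1)$ plus the fact that weakly convergent sequences of positive normal functionals on $B(\Ha)$ converge in norm), and your reductions are sound up to and including the statement that the theorem is equivalent to the unitary $U=T_\rho|_{\overline{\iota(\Ne)}}$ having pure point spectrum. But the proof of that last statement --- which is the entire content of the theorem --- is missing, and you flag this yourself (``this should force two-sided tightness'').

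Concretely: the estimate $\rho^{1/2}\Phi^n(X^*X)\rho^{1/2}\le\|X\|^2\rho$ controls only $\sup_n\|U^n\iota(X)(I-Q_N)\|$, i.e.\ the right tails of the orbit vectors $\Phi^n(X)\rho^{1/2}$. Relative norm compactness also requires $\sup_n\|(I-Q_N)\Phi^n(X)\rho^{1/2}\|\to0$, which amounts to a uniform bound $\Phi^n(X)\rho\,\Phi^n(X)^*\le C\rho$, i.e.\ uniform boundedness of $\rho^{-1/2}\Phi^n(X)\rho^{1/2}$. Running your estimate for $X^*$ does not supply this: it controls $\rho^{1/2}\Phi^n(XX^*)\rho^{1/2}$, again a right tail of the other orbit. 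The modular objects you invoke are those of $(\Ne,\rho|_\Ne)$; $U$ does commute with them, but $J_\Ne$ relates $X\Omega$ to $X^*\Omega$ only through the unbounded $\Delta_\Ne^{1/2}$, while the left/right asymmetry in $HS(\Ha)$ is governed by the modular group of $\rho$ on all of $B(\Ha)$. The statement that this global modular group leaves $\Ne$ invariant and restricts to the modular group of $\rho|_\Ne$ is, by Takesaki's theorem, equivalent to the existence of a $\rho$-preserving conditional expectation onto $\Ne$ --- essentially what you are trying to prove --- so the proposed fix is circular as it stands. (Your order-interval compactness only yields norm compactness of $\{|U^n\iota(X)|^2\}$, and your own model $\ketbra{u_n}{f}$ shows this is insufficient.) To close the gap you would need the predual compactness input the paper uses (weak convergence of positive normal functionals on $B(\Ha)_*$ implies norm convergence, Takesaki Cor.~III.5.11), or some other argument that $\overline{\iota(\Ne)}\cap L^2_{fl}=\{0\}$.
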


\begin{proof} Let $\mathcal B_1$, $\Ne_1$ and $\mathcal R_1$ be the unit balls of $B(\Ha)$, $\Ne$ and $\Me_r$, respectively.
 Then 
\[
{\mathcal R}_1\subseteq \Ne_1\subseteq \bigcap_n \Phi^n(\mathcal B_1).
\]
Indeed, the first inclusion follows from $\Me_r\subseteq \Ne$ and the second from the fact that the restriction 
$\Phi|_{\Ne}$ is an automorphism. 
We will show that ${\mathcal R}_1=\bigcap_n \Phi^n(\mathcal B_1)$, which implies the statement. (This proof is inspired by \cite{Arveson}.)

We will use a Hahn-Banach separation argument. So let $X\in \bigcap_n \Phi^n(\mathcal B_1)\setminus {\mathcal R}_1$. Since ${\mathcal R}_1\subset B(\Ha)$ is convex and compact in the weak*-topology, there exists some $\psi\in B(\Ha)_*$ such that 
 \[
\Tr [\psi X]>\sup_{Y\in {\mathcal R}_1} \Tr [\psi Y]=\|\psi|_{\Me_r}\|_1=\|F_*\psi\|_1.
 \]
For each $n\in \mathbb N$, there is some $Y_n\in \mathcal B_1$ such that $X=\Phi^n(Y_n)$ and we have
\[
\|\Phi^n_*\psi\|_1\ge \Tr [\Phi^n_*(\psi)Y_n]=\Tr [\psi X].
\]
Note that since $\Phi_*$ is a contraction, $\{\|\Phi^n_*\psi\|_1\}_n$ is a bounded nonincreasing sequence and we have
\[
\lim_n \|\Phi^n_*\psi\|_1\ge \Tr [\psi X]>\|F_*\psi\|_1.
\]

On the other hand,  for any $\varphi\in B(\Ha)_*$, the orbit
\[
\mathbf S_*\varphi: = \{S_*\varphi,\ S\in \mathbf S\}=\{\Phi_*^n\varphi, \ n\in \mathbb N\}^{-w}
\]
is weakly  compact. Since $F\in \mathbf S$, $\mathbf S_*\varphi$ contains $F_*\varphi$ and since $B(\Ha)_*$ is a separable Banach 
space, the weak topology on the orbit is a metric topology (\cite[Theorem V.6.3]{dunford-schwartz}). Hence
there is a subsequence of $\Phi^{n}_*\varphi$ converging weakly to $F_*\varphi$. 

Let $\Phi^{n_k}$ be such that $\Phi^{n_k}_*\psi\to F_*\psi$ and let $\varphi_1,\dots,\varphi_4\in B(\Ha)_*^+$ be such that $\psi=\sum_i c_i\varphi_i$. Then  we may assume that $\Phi^{n_k}_*\varphi_i$ are all weakly convergent, restricting to subsequences if necessary
(\cite[Theorem V.6.1]{dunford-schwartz}).   By \cite[Corollary III.5.11]{takesakiI},  $\Phi^{n_k}_*\varphi_i$ are all norm convergent. It follows that $\Phi^{n_k}_*\psi\to F_*\psi$ in norm, so that 
\[
\lim \|\Phi^n_*\psi\|_1=\lim \|\Phi^{n_k}_*\psi\|_1=\|F_*\psi\|_1,
\]
a contradiction.

\end{proof}

\begin{coro}\label{coro:range} Let $\Phi: B(\Ha)\to B(\Ha)$ be a channel admitting a faithful normal invariant state  $\rho$. 
Then $\Ne$ is the range of a normal conditional expectation $E_\Ne$ preserving $\rho$ and commuting with $\Phi$. Consequently, $\Ne$ is an atomic von Neumann algebra.
\end{coro}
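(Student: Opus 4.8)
The plan is to read off the conditional expectation from what has already been assembled, and then to obtain atomicity by a Banach-space (Radon--Nikod\'ym) argument on the predual.

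First I would simply set $E_\Ne:=F$, the normal conditional expectation introduced before the Lemma. By Theorem \ref{thm:cjd} we have $\Ne=\Me_r$, and by part 1 of the Lemma $\Me_r=F(B(\Ha))$; hence the range of $F$ is exactly $\Ne$. The remaining properties are immediate from the construction of $F$ recalled earlier: $F$ is a normal conditional expectation preserving $\rho$, and it satisfies $TF=FT$ for every $T\in\mathbf S$. Since $\Phi=\Phi^1\in\mathbf S$, this gives $\Phi F=F\Phi$, i.e. $E_\Ne$ commutes with $\Phi$. This settles the first assertion with essentially no further work.

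The substance is the atomicity of $\Ne$, which I would deduce from the general fact that the range of a normal conditional expectation on $B(\Ha)$ is atomic. Here is the route. Since $E_\Ne$ is normal it admits a preadjoint $(E_\Ne)_*:B(\Ha)_*\to B(\Ha)_*$, a contractive idempotent whose range is the set of normal functionals fixed by $\psi\mapsto \psi\circ E_\Ne$; restriction to $\Ne$ identifies this range isometrically with the predual $\Ne_*$. Thus $\Ne_*$ is a closed, complemented subspace of the trace class $B(\Ha)_*$. Now $B(\Ha)_*$ is the dual of the space of compact operators on $\Ha$, which is separable because $\Ha$ is separable; a separable dual Banach space has the Radon--Nikod\'ym property (RNP), and the RNP is inherited by closed subspaces. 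Hence $\Ne_*$ has the RNP, and I would then invoke the characterization that a von Neumann algebra is atomic precisely when its predual has the RNP to conclude that $\Ne$ is atomic.

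The main obstacle lies entirely in this last step: the equivalence ``$\Ne$ atomic if and only if $\Ne_*$ has the RNP'' is the nontrivial input, and I would cite it as a known result rather than reprove it (the failure of RNP for $L^1[0,1]$ and for the predual of a $\mathrm{II}_1$ factor is exactly what rules out diffuse and continuous parts). Everything else is routine: verifying that $(E_\Ne)_*$ is a bounded projection with range isometric to $\Ne_*$, and that separable dual spaces have the RNP, which passes to closed subspaces. A more algebraic phrasing of the same obstacle is the statement ``the range of a normal conditional expectation from an atomic von Neumann algebra is atomic''; one could instead prove this by compressing $E_\Ne$ to corners $qB(\Ha)q\to q\Ne q$ with $q\in\Ne$ a projection and reducing to the production of a single minimal projection in $\Ne$, but I find the predual/RNP argument cleaner and would present that.
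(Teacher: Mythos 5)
Your proposal is correct, and the first assertion is handled exactly as in the paper: the paper's entire proof is ``put $E_\Ne=F$'' (with the range, $\rho$-invariance and commutation with $\Phi$ all read off from the construction of $F$ and Theorem \ref{thm:cjd}, just as you do), followed by a citation of Tomiyama for atomicity. Where you genuinely diverge is the atomicity step. The paper invokes Tomiyama's structure theorem for ranges of normal norm-one projections on $B(\Ha)$ directly; you instead pass to the predual, note that $(E_\Ne)_*$ is a contractive idempotent whose range is isometric to $\Ne_*$, so that $\Ne_*$ embeds as a complemented closed subspace of the trace class, which is a separable dual and hence has the Radon--Nikod\'ym property, and then cite the equivalence ``$\Ne$ atomic $\Leftrightarrow$ $\Ne_*$ has the RNP'' (due to Chu). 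Both routes rest on one external black box of comparable depth, and your intermediate steps (isometry of the range of $(E_\Ne)_*$ with $\Ne_*$, RNP of separable duals, heredity of RNP to closed subspaces) are all standard and correctly assembled. The paper's citation is the more economical and purely operator-algebraic one, and Tomiyama's result does not need separability of $\Ha$; your argument is a clean Banach-space alternative that makes visible where separability enters and would generalize verbatim to any ambient von Neumann algebra with RNP predual, i.e.\ any atomic one --- which is precisely the setting of the remark following the corollary.
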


\begin{proof} Put $E_\Ne= F$ and the fact that $\Ne$ must be  atomic follows by \cite{tomiyama}.
\end{proof}

\begin{rem}  Note  that Theorem \ref{thm:cjd} and 
Corollary \ref{coro:range} hold for quantum channels on any atomic von Neumann algebra $\mathcal M$. The same proof can be used also in continuous time case.
\end{rem}

\begin{rem}\label{rem:EID}
As we mentioned in the introduction, the DFA is a basic object in the study of the problem of environmental decoherence.
According to the theory introduced by Blanchard and Olkiewicz (\cite{BO}), a system undergoing the evolution 
$(\Phi^n)_n$ displays environmental decoherence if there exist two subspaces ${\mathcal M}_1$ and ${\mathcal M}_2$, both preserved by the channel, and such that
\begin{itemize}
\item $B(\Ha)={\mathcal M}_1\oplus {\mathcal M}_2$,
\item ${\mathcal M}_1$ is a von Neumann algebra and $\Phi$ is a *-automorphism when restricted to ${\mathcal M}_1$,
\item $w*-\lim_n \Phi^n(x)=0$ for all $x$ in ${\mathcal M}_2$.
\end{itemize}
The fact that ${\mathcal M}_1$ coincides with $\Ne$ and can be the image of a normal conditional expectation is in general an interesting but not clear point as far as we know (\cite{CSU}). The previous theorem allows us to prove that this is true whenever the channel has an invariant faithful density; moreover, in the same case, we can deduce there is environmental decoherence choosing the decomposition ${\mathcal M}_1=\Ne$ and ${\mathcal M}_2={\mathcal M}_s$.
This last consideration is an almost direct consequence stated for instance in \cite[Proposition 31]{CSU}.

Due to the previous remark, these conclusions hold also for the continuous time case, so for instance, it can generalize many of the results concerning EID for quantum dynamical semigroups as treated in \cite{CSU} (see in particular Section IV).

Finally, we emphasize that the existence of a conditional expectation with range $\Ne$, commuting with the channel, can be a useful tool to study the velocity of decoherence; but we shall come back to this point later, in Remark \ref{rem:SG}.
\end{rem}

\section{Cyclic decompositions}\label{section:cycles}

In this section, we shall investigate the cyclic behavior of a quantum channel. We shall start with irreducible maps: here the cycles can be analyzed using the period and we can prove that the DFA is commutative. Then we shall go to the general case, where the study of cycles is more demanding.

Following conventional terminology already introduced in the 70s  (see \cite{EHK} and references therein), we say that the map  $\Phi$ is irreducible if there are 
no proper hereditary subalgebras preserved by the channel; equivalently, if there exist 
no nontrivial subharmonic projections, that is, if $P\in B(\Ha)$ is a projection such that $\Phi(P) \ge P$ then 
 $P=0$ or $P=I$. If there is a faithful normal invariant state, this clearly happens if and only if $\Fe=\mathbb CI$. 
Moreover, it follows by the Perron-Frobenius theory for positive map on trace class operators {\cite{Schrader} that there is at most one invariant faithful state for irreducible $\Phi$.
 
 \subsection{Irreducible quantum channels}\label{Subsection:IQC}
 
 We shall concentrate here on irreducible quantum channels with an invariant faithful state.
 In this case, the cycles of the channel are clearly related to the decoherence free algebra, we can use the notion of period (which consists in a precise structure of the peripheral spectrum of the channel), and this will give a precise link with the fixed points domain of the powers of the channel.

First, we introduce the definition of period as was made for the finite dimensional case in \cite{FP} (but see also \cite{EHK} and \cite{Schrader}).

\begin{defi}\label{def:period} Period of $\Phi$. 
Let $\Phi$ be an irreducible quantum channel. Then the period $d$ is the maximal integer $d$ such that there exists a resolution of the identity 
$Q_0,\dots,Q_{d-1}$ verifying $\Phi(Q_j)=Q_{j-1}$ for all $j$ (subtractions on indices are modulo $d$).
\\
Each $Q_j$ is called a cyclic projection and the set $Q_0,...Q_{d-1}$ will be called cyclic decomposition (or cyclic resolution) of $\Phi$.
\end{defi}

This is a good definition in the context of finite dimensional Hilbert spaces. When we work on infinite dimensional spaces, we need to prove that (or when) the period $d$ is finite. For this, we need to use some spectral properties of the channel.

\begin{prop} [Groh \cite{Groh84} and Batkai et al {\cite[Propositions 6.1 and 6.2]{BatkaiEtAl}}] \label{prop:period} Let  $\Phi$ be an irreducible quantum channel on $B(\Ha)$ with an invariant faithful state.
Then the peripheral point spectrum of $\Phi$ is the group of all the $d$-th roots of unity for some $d\ge 1$ 
and all the eigenvalues in the peripheral point spectrum are simple.
Moreover there exists a unitary operator $U$ such that $U^d=I$ and $\Phi(U^n)=\exp(i2\pi n/d)U^n$ for all integer $n$.
\end{prop}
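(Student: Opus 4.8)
The plan is to establish the statement in three stages: first show the peripheral point spectrum is a finite cyclic group of roots of unity with simple eigenvalues, then exhibit the distinguished unitary eigenvector. I would rely heavily on the structural results already available from the Jacobs--DeLeeuw--Glicksberg decomposition machinery. By Theorem \ref{thm:cjd} and Corollary \ref{coro:range}, the reversible subalgebra $\Me_r = \Ne$ is the range of the normal conditional expectation $F$, and the minimal ideal $M(\mathbf S)$ acts on $\Me_r$ as a compact topological group $G$ of $*$-automorphisms commuting with $\Phi$. The key reduction is that, in the irreducible case with a faithful invariant state, $\Fe = \mathbb{C}I$, so the $\Phi$-fixed points inside $\Me_r$ are trivial; this forces $G$ to act ergodically and pins down its structure.

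First I would analyze the peripheral point spectrum as a group. If $\Phi(X) = \lambda X$ and $\Phi(Y) = \mu Y$ with $|\lambda| = |\mu| = 1$, then $X, Y \in \Me_r$ by part 1 of the Lemma, and since $\Phi|_{\Me_r}$ is a $*$-automorphism (Proposition \ref{prop:faithful_invariant}(ii)), the products $XY$ and $X^*$ are again eigenvectors with eigenvalues $\lambda\mu$ and $\bar\lambda$. Thus the peripheral point spectrum $\Gamma$ is a subgroup of the unit circle. To see that each eigenspace is one-dimensional: if $X$ is a peripheral eigenvector with eigenvalue $\lambda$, then $X^*X$ satisfies $\Phi(X^*X) = \Phi(X)^*\Phi(X) = X^*X$ (using that $X \in \Me_r$), so $X^*X \in \Fe = \mathbb{C}I$, and likewise $XX^* \in \mathbb{C}I$; normalizing, $X$ is a scalar multiple of a unitary $U_\lambda$. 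If $X'$ is another eigenvector for the same $\lambda$, then $U_\lambda^* X'$ is $\Phi$-fixed, hence scalar, giving simplicity. This also shows $\Gamma$ is generated by unitaries and that $\lambda \mapsto U_\lambda$ can be chosen multiplicatively, so $\Gamma \cong \widehat{G}$ is the dual of the compact group $G$.

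The main obstacle, and the step requiring the spectral input cited from \cite{Groh84, BatkaiEtAl}, is proving that $\Gamma$ is \emph{finite} — equivalently that the compact group $G$ is finite cyclic, so $\Gamma = \{e^{i2\pi n/d} : n\}$ for some $d \ge 1$. Abstractly $\Gamma$ is only known to be a subgroup of the circle, which could a priori be dense. Here I would invoke the compactness of $\mathbf S$ in the point-ultraweak topology together with separability of $\Ha$: the argument is that if $\Gamma$ were infinite, the associated eigenvectors $U_\lambda$ would be mutually orthogonal in the GNS space of $\rho$ (since $\Tr[\rho\, U_\lambda^* U_{\lambda'}]$ vanishes for $\lambda \ne \lambda'$ by a standard invariance computation), which is compatible with separability, so finiteness is not automatic and genuinely uses the structure of the minimal group $M(\mathbf S)$ and the norm-convergence/metrizability facts exploited in the proof of Theorem \ref{thm:cjd}. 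Granting finiteness, $\Gamma$ is a finite subgroup of the circle, hence exactly the group of $d$-th roots of unity for $d = |\Gamma|$.

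Finally, to produce the operator $U$ with $U^d = I$ and $\Phi(U^n) = e^{i2\pi n/d} U^n$, I would take the unitary eigenvector $U := U_\zeta$ for the primitive root $\zeta = e^{i2\pi/d}$. Since $\Gamma$ is cyclic of order $d$ and the multiplicative choice of eigenvectors gives $U^n$ as an eigenvector for $\zeta^n = e^{i2\pi n/d}$, we get $\Phi(U^n) = e^{i2\pi n/d} U^n$ for all $n$; in particular $U^d$ is $\Phi$-fixed and unitary, hence a scalar of modulus one, and after adjusting the phase of $U$ we may arrange $U^d = I$. The von Neumann algebra generated by $U$ is then the full reversible subalgebra in the irreducible case, which sets up the cyclic decomposition $Q_j$ in the subsequent results.
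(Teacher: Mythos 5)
The first thing to say is that the paper itself offers no proof of this proposition: it is imported from Groh \cite{Groh84} and Batkai et al.\ \cite{BatkaiEtAl}, with only the remark that the finite-dimensional case goes back to \cite{EHK}. So there is no internal argument to compare against, and the question is whether your reconstruction is self-contained. Most of its ingredients are correct and standard: peripheral eigenvectors lie in $\Me_r=\Ne$; the identity $\Phi(X^*X)=\Phi(X)^*\Phi(X)=X^*X$ together with $\Fe=\mathbb CI$ forces every peripheral eigenvector to be a scalar multiple of a unitary; simplicity follows because $U_\lambda^*X'$ is then a fixed point; and the final phase adjustment giving $U^d=I$ is fine. (A small ordering point: to conclude that the peripheral point spectrum $\Gamma$ is closed under products you need $XY\ne 0$, which you only obtain once you know the eigenvectors are unitaries, so the unitarity step has to come before the group-structure step.)

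The genuine gap is the finiteness of $\Gamma$, which is the entire nontrivial content of the proposition in infinite dimensions. You correctly observe that the mutual orthogonality of the $U_\lambda$ in the GNS space of $\rho$ is compatible with $\Gamma$ being countably infinite, but you then merely assert that finiteness ``genuinely uses the structure of the minimal group $M(\mathbf S)$'' without producing an argument; a priori $\Gamma$ could be a countable dense subgroup of the circle (ergodic automorphisms with infinite pure point spectrum do exist), so this step cannot be waved through. It can in fact be closed using only material proved before this point in the paper: by Corollary \ref{coro:range}, $\Ne$ is atomic and $\Phi|_{\Ne}$ is a *-automorphism permuting its minimal central projections; any two projections in one orbit have the same $\rho$-measure, which is strictly positive by faithfulness, and the sum over an orbit is a nonzero $\Phi$-fixed projection, hence equal to $I$ by irreducibility, so there is a single \emph{finite} orbit $Q_0,\dots,Q_{d-1}$. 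Each block $\Ne Q_0$ is a type I factor on which $\Phi^d$ acts as an inner automorphism $\mathrm{Ad}(W)$; a $\Phi^d$-fixed element of $\Ne Q_0$ yields, by summing its $\Phi$-orbit, a $\Phi$-fixed element of $\Ne$, hence a scalar, so the fixed algebra $\{W\}'$ of $\mathrm{Ad}(W)$ reduces to the scalars; since $W\in\{W\}'$, $W$ is itself scalar and the factor is one-dimensional. Hence $\Ne=\mathrm{span}\{Q_0,\dots,Q_{d-1}\}$ is $d$-dimensional, and since the $U_\lambda$ are linearly independent, $|\Gamma|=d<\infty$. Without some such argument your proof establishes only that $\Gamma$ is a subgroup of the unit circle with simple eigenvalues, not that it is the group of $d$-th roots of unity.
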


In the finite dimensional case, this result was proved in \cite{EHK}. Here the existence of a faithful invariant state is not necessary and 
 it is enough to assume that $\Phi$ is a Schwarz map. On the other hand, \cite[Example 1.3]{Groh82} shows that if the map is only positive, the
 peripheral spectrum may  not be a subgroup of the unit circle.

\begin{coro}\label{coro:W1}
Let  $\Phi$ be an irreducible quantum channel on $B(\Ha)$ with an invariant faithful state.
Then $\Phi$ has finite period, the cyclic resolution of $\Phi$ is unique and $\Ne$ is an abelian algebra spanned by the cyclic projections of $\Phi$.
\end{coro}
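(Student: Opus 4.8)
The plan is to read off the cyclic data directly from the peripheral spectrum supplied by Proposition \ref{prop:period}, and then to identify $\Ne$ through Theorem \ref{thm:cjd}. Write $\omega=\exp(i2\pi/d)$ and let $U$ be the unitary of Proposition \ref{prop:period}, so that $U^d=I$ and $\Phi(U^n)=\omega^nU^n$ for all $n$. First I would note that $I=U^0,U,\dots,U^{d-1}$ are eigenvectors of $\Phi$ for the distinct eigenvalues $\omega^0,\dots,\omega^{d-1}$, hence linearly independent; together with $U^d=I$ this forces $U$ to have exactly the $d$ distinct eigenvalues $\{\omega^j\}$, so that its spectral projections
$$
Q_j:=\frac1d\sum_{n=0}^{d-1}\omega^{-jn}U^n,\qquad j=0,\dots,d-1,
$$
are all nonzero, mutually orthogonal, and sum to $I$. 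A direct discrete-Fourier computation using $\Phi(U^n)=\omega^nU^n$ gives $\Phi(Q_j)=Q_{j-1}$, so $\{Q_0,\dots,Q_{d-1}\}$ is a cyclic resolution and the period is at least $d$.

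For the reverse bound (and thus finiteness), I would take an arbitrary cyclic resolution $\{P_0,\dots,P_{d'-1}\}$ and form the unitary $W=\sum_k\eta^kP_k$ with $\eta=\exp(i2\pi/d')$; the relation $\Phi(P_k)=P_{k-1}$ then yields $\Phi(W)=\eta W$, so $\eta$ lies in the peripheral point spectrum, which by Proposition \ref{prop:period} is the group of $d$-th roots of unity. Hence $d'\mid d$, in particular $d'\le d$, so the period is exactly $d<\infty$.

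Uniqueness follows from the simplicity of the peripheral eigenvalues. If $\{P_k\}$ is a cyclic resolution of length $d$, the associated $W$ is an eigenvector for the simple eigenvalue $\omega$, so $W=cU$ for a scalar $c$; since $W^d=U^d=I$ we get $c^d=1$, and writing $c=\omega^{-m}$ and matching the spectral decompositions of $W$ and of $U$ forces $P_k=Q_{k+m}$ for a fixed $m$ (indices mod $d$). Thus the cyclic projections are unique up to the unavoidable cyclic relabelling.

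Finally, for the structure of $\Ne$, I would invoke Theorem \ref{thm:cjd} together with the Lemma preceding it to write $\Ne=\Me_r=\overline{\spn\{\text{peripheral eigenvectors}\}}^{w*}$. Since the peripheral point spectrum is $\{\omega^j\}$ with each eigenvalue simple, the peripheral eigenvectors are precisely the scalar multiples of $U^0,\dots,U^{d-1}$, so $\Ne=\spn\{I,U,\dots,U^{d-1}\}$, a $d$-dimensional space for which no closure is needed. By Fourier inversion this equals $\spn\{Q_0,\dots,Q_{d-1}\}$, which, being the linear span of mutually orthogonal projections summing to $I$, is an abelian von Neumann algebra. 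The most delicate point I expect is the bookkeeping in the uniqueness step, namely pinning down the phase $c$ and the shift $m$, together with the verification that all $d$ spectral projections $Q_j$ are nonzero; everything else reduces to a routine discrete Fourier argument on the finite family $\{U^n\}$.
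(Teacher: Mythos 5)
Your proof is correct and follows essentially the same route as the paper: extract $U$ from Proposition \ref{prop:period}, identify $\Ne=\Me_r=\spn\{I,U,\dots,U^{d-1}\}$ via Theorem \ref{thm:cjd}, read the cyclic projections off the spectral resolution of $U$, and use simplicity of the peripheral eigenvalues for uniqueness. Your explicit verification that any cyclic resolution of length $d'$ forces $d'\mid d$ (so the constructed resolution really realizes the \emph{maximal} $d$ of Definition \ref{def:period}) is a small detail the paper leaves implicit, but it does not change the argument.
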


\begin{proof}
Let $\omega$ be the primary $d$-th root of unity and $U$ the  unitary operator satisfying $U^d=I$ and $\Phi(U^n)=\omega^n U^n$ of Proposition \ref{prop:period}. It follows that $U^n$ is the unique (up to multiplicative constants) eigenvector associated with the eigenvalue $\omega^n$.
 By Theorem \ref{thm:cjd},
\[
\Ne=\Me_r=\spn\{I, U,\dots U^{d-1}\}=\{U\}''.
\] 
In particular, it follows that the abelian subalgebra generated by $U$ is finite dimensional and   $U$ admits a spectral representation
$$
U= \sum_{j=0}^{d-1} \omega^j Q_j
$$
for some orthogonal projections $Q_j$ summing up to $I$. We immediately deduce that, since $\Phi(U)=\omega U$, then $\Phi(Q_j)=Q_{j-1}$ for all $j$, so that $Q_0,..Q_{d-1}$ is a cyclic decomposition of $\Phi$ and we have
\[
\Ne=\{U\}''=\spn\{Q_0,\dots,Q_{d-1}\}.
\] 
To prove uniqueness, assume that $P_0,\dots, P_{d-1}$ is another cyclic resolution of $\Phi$. Then we can construct the unitary operator
$$
V = \sum_{j=0}^{d-1} \omega^j P_j,
$$
which is an eigenvector for $\Phi$ corresponding to $\omega$. Since the eigenvalues are simple, we must have $V=zU$ for some $z\in \mathbb C$, $|z|=1$ and it is easy to see that $z=\omega^k$ 
for some $k\in \{0,\dots,d-1\}$ and then for each $j$,  
we must have $P_j=Q_{j-k}$ (subtraction modulo $d$).

\end{proof}

\begin{prop}\label{prop:period_irred}
Suppose $\Phi$ is an irreducible quantum channel with an invariant faithful state and let $Q_0,\dots,Q_{d-1}$ be the cyclic resolution for $\Phi$. 
Then
\begin{enumerate}
\item ${\cal F}(\Phi^m)$ is a subalgebra of $\Ne$ for any $m$; 
\item ${\cal F}(\Phi^d)=\Ne$ and $d$ is the smallest integer with this property;
\item $\Fe(\Phi^m)= \mathbb C1$ if and only if $GCD(m,d)=1$.
\end{enumerate}
Moreover, denote by $\Phi^d_{|k}$ the restriction of $\Phi^d$ to the subalgebra $Q_k B(\Ha) Q_k$, then $\Phi^d_{|k}$ is irreducible, positive recurrent and aperiodic, and consequently ergodic.

\end{prop}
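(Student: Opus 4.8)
The plan is to reduce all three numbered items to a single key claim, namely that $\Fe(\Phi^m)\subseteq \Ne$ for every $m$, and then to read off the rest from the finite-dimensional description $\Ne=\spn\{I,U,\dots,U^{d-1}\}$ with $\Phi(U^n)=\omega^n U^n$ provided by Corollary \ref{coro:W1}. Once the containment is known, item 1 is immediate: $\Phi^m$ is again a channel with the same faithful invariant state $\rho$, so $\Fe(\Phi^m)$ is a von Neumann algebra by Proposition \ref{prop:faithful_invariant}(i), and being contained in $\Ne$ it is a von Neumann subalgebra of $\Ne$.

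For the key claim I would use the Jacobs--de Leeuw--Glicksberg splitting $B(\Ha)=\Me_r\oplus\Me_s=F(B(\Ha))\oplus\mathrm{Ker}(F)$ from the preceding Lemma, together with $\Me_r=\Ne$ (Theorem \ref{thm:cjd}). Given $A\in\Fe(\Phi^m)$, write $A=F(A)+(A-F(A))$; since $\Phi F=F\Phi$ one has $\Phi^m(F(A))=F(A)$, hence $A-F(A)\in\Fe(\Phi^m)\cap\mathrm{Ker}(F)=\Fe(\Phi^m)\cap\Me_s$. The point is that this intersection is trivial: if $\Phi^m(B)=B$, the forward orbit $\{\Phi^n(B)\}_n=\{B,\Phi(B),\dots,\Phi^{m-1}(B)\}$ is finite, hence weak*-closed, so $O(B)$ equals it; if moreover $B\in\Me_s$ then $0\in O(B)$ forces $\Phi^j(B)=0$ for some $j$, and applying $\Phi^{m-j}$ gives $B=\Phi^m(B)=0$. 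Thus $A=F(A)\in\Me_r=\Ne$, proving the claim.

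Items 2 and 3 are then pure linear algebra on $\Ne$. Since $\Phi^d(U^n)=\omega^{dn}U^n=U^n$ we get $\Ne\subseteq\Fe(\Phi^d)$, so with the key claim $\Fe(\Phi^d)=\Ne$; and if $\Fe(\Phi^m)=\Ne$ then $U\in\Fe(\Phi^m)$ forces $\omega^m=1$, i.e. $d\mid m$, giving minimality. For item 3, because $\Phi^m$ acts diagonally on the basis $\{U^n\}$ of $\Ne$, the containment yields $\Fe(\Phi^m)=\spn\{U^n:\ \omega^{mn}=1\}=\spn\{U^n:\ d\mid mn\}$; counting the solutions $n\in\{0,\dots,d-1\}$ of $mn\equiv0\ (\mathrm{mod}\ d)$ shows their number is $\mathrm{GCD}(m,d)$, so $\Fe(\Phi^m)=\mathbb C I$ exactly when $\mathrm{GCD}(m,d)=1$.

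For the final assertion, I would first record that each corner $\mathcal A_k:=Q_kB(\Ha)Q_k$ is preserved by $\Phi^d$: since $Q_k\in\Ne\subseteq\Me$, the multiplicative property \eqref{MultProp} gives $\Phi(Q_kAQ_k)=Q_{k-1}\Phi(A)Q_{k-1}$, so $\Phi(\mathcal A_k)\subseteq\mathcal A_{k-1}$ and $\Phi^d(\mathcal A_k)\subseteq\mathcal A_k$; as $\Phi^d(Q_k)=Q_k$, the restriction $\Phi^d_{|k}$ is a unital normal completely positive map on $\mathcal A_k\cong B(Q_k\Ha)$. Compressing $\rho$ supplies the faithful normal invariant state: $\tau_k:=Q_k\rho Q_k$ is invariant for $\Phi^d_{|k}$ (a one-line computation using $\Phi^d(X)\in\mathcal A_k$ and invariance of $\rho$) and faithful on $Q_k\Ha$, since $\langle\psi,\rho\psi\rangle=0$ with $\psi=Q_k\psi$ forces $\psi=0$; this gives positive recurrence. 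Irreducibility follows from item 2, because $\Fe(\Phi^d_{|k})=\Fe(\Phi^d)\cap\mathcal A_k=\Ne\cap\mathcal A_k=\mathbb C Q_k$. The main obstacle is aperiodicity, for which I would show that the peripheral point spectrum of $\Phi^d$ is exactly $\{1\}$: if $\Phi^d(Y)=\lambda Y$ with $|\lambda|=1$ and $Y\ne0$, then $W:=\spn\{\Phi^n(Y):n\ge0\}$ is finite dimensional and $\Phi$-invariant with $\Phi^d|_W=\lambda\,\mathrm{id}$, so any eigenvalue $\mu$ of $\Phi|_W$ satisfies $\mu^d=\lambda$ and $|\mu|=1$; by Proposition \ref{prop:period} $\mu$ is a $d$-th root of unity, whence $\lambda=\mu^d=1$. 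Applied to eigenvectors lying in $\mathcal A_k$, this forces $\Phi^d_{|k}$ to have trivial peripheral spectrum and hence period $1$; irreducibility, positive recurrence and aperiodicity then yield ergodicity through the Perron--Frobenius description of Proposition \ref{prop:period} and Corollary \ref{coro:W1}.
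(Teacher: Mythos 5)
Your proof is correct, but it takes a genuinely different route from the paper's at several points, generally substituting spectral/structural arguments for the paper's more hands-on ones. For item 1 the paper does not use the Jacobs--de Leeuw--Glicksberg splitting at all: it shows $\Me(\Phi^n)\subseteq\Me(\Phi^{n-1})$ by a Schwarz-inequality-plus-faithfulness computation, deduces $\Ne(\Phi^m)=\Ne$, and then uses $\Fe(\Phi^m)\subseteq\Ne(\Phi^m)$; your orbit-finiteness argument (a $\Phi^m$-periodic point has a finite, hence weak*-closed, orbit, so $\Fe(\Phi^m)\cap\Me_s=\{0\}$) is a valid alternative that leans on Theorem \ref{thm:cjd} and the preceding Lemma instead. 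For item 3 the paper runs a combinatorial argument on minimal projections of $\Fe(\Phi^m)$ (orbits under $\Phi$, the relation $\sum_i P_i\in\Fe=\mathbb C1$, and the resulting divisibility constraints), whereas your diagonalization of $\Phi^m$ on the basis $\{U^n\}$ and the count of solutions of $mn\equiv 0 \ (\mathrm{mod}\ d)$ is shorter and arguably cleaner. For the final assertion, the paper proves irreducibility by exhibiting $\sum_{j}\Phi^j(Q)$ as a nontrivial fixed projection of $\Phi$, and aperiodicity by extending a cyclic resolution of $\Phi^d_{|k}$ to one of $\Phi$ of length $dd_k$ and invoking maximality of $d$; you instead get irreducibility from $\Fe(\Phi^d_{|k})=\Ne\cap Q_kB(\Ha)Q_k=\mathbb C Q_k$ (a direct consequence of your item 2) and aperiodicity from the observation that the peripheral point spectrum of $\Phi^d$ is $\{1\}$, via Proposition \ref{prop:period}. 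Both routes are sound; yours makes the dependence on Corollary \ref{coro:W1} and Proposition \ref{prop:period} more explicit, while the paper's is more elementary in that it manipulates the cyclic projections directly.
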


\begin{proof}  Let $\rho\in \mathfrak S(\Ha)$ be the (unique) faithful invariant state of $\Phi$, then $\rho$ is also invariant for  $\Phi^m$, so that by Propositions 
\ref{prop:fixed} and \ref{prop:faithful_invariant}, $\Fe(\Phi^m)$ is a subalgebra in $\Ne(\Phi^m)$. Note that for any $n\in \mathbb N$ and  $X\in \Me(\Phi^n)$, we have by Schwarz inequality  that
\begin{eqnarray*}
\Phi^n(X^*X)&=&\Phi(\Phi^{n-1}(X^*X))\ge \Phi(\Phi^{n-1}(X)^*\Phi^{n-1}(X))\\
&\ge& \Phi^n(X)^*\Phi^n(X)=\Phi^n(X^*X).
\end{eqnarray*}
Using the fact that $\Phi^{n-1}(X^*X)-\Phi^{n-1}(X)^*\Phi^{n-1}(X)\ge 0$ and that $\rho$ is a faithful invariant state, we obtain that $\Phi^{n-1}(X^*X)=\Phi^{n-1}(X)^*\Phi^{n-1}(X)$. This implies that $\Me(\Phi^n)\subseteq \Me(\Phi^{n-1})$ for all $n$ and hence
\[
\Ne(\Phi^m)=\cap_{n} \Me(\Phi^{mn})=\cap_n\Me(\Phi^n)=\Ne.
\]
This proves 1. 

By definition of cyclic decomposition, we have $Q_j\in \Fe(\Phi^d)$ for all $j$, this implies $\Ne\subseteq \Fe(\Phi^d)$. The converse inclusion holds by part 1.
If $n<d$, then $\Phi^n(Q_{d-1})=Q_{d-n-1}\ne Q_{d-1}$, so that $Q_{d-1}\notin \Fe(\Phi^n)$ and hence $\Fe(\Phi^n)\ne \Ne$, this proves 2.

Assume now that $\Fe(\Phi^m)\ne \mathbb C1$, then 
 there is some nontrivial minimal projection $P\in \Fe(\Phi^m)$, 
which by part 1. must be of the form
$P=Q_{j_1}+\dots + Q_{j_k}$ for some (distinct) indices $0\le j_i\le d-1$ and $k<d$. 
Let $P_i=\Phi^i(P)$, $i=0,\dots,m-1$, then all  $P_i$ are minimal projections in $\Fe(\Phi^m)$, so that for $i\ne j$, either $P_iP_j=0$ or $P_i=P_j$. By rearranging the indices if necessary, we may assume that  $P_0,\dots,P_{l-1}$ are mutually orthogonal and all other 
$P_i$ are contained in $\{P_0,\dots,P_{l-1}\}$. Then $\sum_{i=0}^{m-1} P_i=\sum_{j=0}^{l-1} n_jP_j$ for some integers $n_j$.
 On the other hand, we have
$\sum_{i=0}^{m-1} P_i\in \Fe=\mathbb C1$ since $\Phi$ is irreducible. It follows that $n_1=\dots=n_{l-1}=:n$ and  
\[
\sum_{i=0}^{m-1} P_i=nI=n\sum_{j=0}^{l-1}P_j.
\]
This implies $m=nl$. Further, $\sum_{j=0}^{l-1}P_j =I$ implies that $d=kl$ by the definition of $P_j$.  Note also that $l>1$ since otherwise we would have $\Phi(P)=P$, 
which is not possible. Conversely, assume that $GCD(m,d)=l>1$ and let $d=kl$. Put $P=Q_0+Q_l+\dots+Q_{(k-1)l}$, then clearly $P$ is a projection, $P\ne 0,1$ and $\Phi^l(P)=P$ and also $\Phi^m(P)=P$, since $m$ is a multiple of $l$, so that $P\in \Fe(\Phi^m)$ and $\Fe(\Phi^m)\ne \mathbb C1$.  

To prove the last statement, observe that  $\Phi^d_{|k}$ is positive recurrent because the restriction of the $\Phi$-invariant state will give a faithful $\Phi^d_{|k}$-invariant state. By contradiction, if $\Phi^d_{|k}$ is reducible, then we have a non trivial $\Phi^d_{|k}$-harmonic projection $Q$, $0<Q < Q_k$, i.e. such that $\Phi^d(Q)=Q$.
But then this $Q$ is in $\Ne$ and, by positivity $\Phi^j(Q)$ is a projection bounded above by $\Phi^j(Q_k)=Q_{k-j}$. We deduce
that $\sum_{j=0}^{d-1}\Phi^j(Q)<\sum_{j=0}^{d-1}Q_{k-j}=1$ is a non trivial projection and a fixed point for $\Phi$ and this contradicts the irreducibility of $\Phi$.

Similarly, for the period, we know that $\Phi^d_{|k}$ has finite period by Corollary \ref{coro:W1}; we call its period $d_k$, 
with cyclic decomposition $R_0,... R_{d_k-1}$, $R_0+\cdots +R_{d_k-1}=Q_k$.
$R_0$ is a fixed point for $\Phi^{dd_k}$, so it belongs to $\Ne$ and $\Phi^j(R_0)$, $j=0,... d\cdot d_k-1$, will give a cyclic decomposition for $\Phi$. So $dd_k=d$ which implies $d_k=1$ and $R_0=Q_k$.
\end{proof}

\begin{rem}\label{rem:SG}
On the line of Remark \ref{rem:EID}, we can now give some more details on how Theorem \ref{thm:cjd} can help in evaluating the ``time for decoherence'' for an irreducible channel with an invariant faithful density. In particular, it is in general interesting to understand when the evolution of the channel tends to become reversible exponentially fast, or equivalently 
when the elements of the stable space ${\mathcal M}_s$ converge to $0$ exponentially fast and with uniform rate;
 this can be characterized using a kind of spectral gap parameter.

In the standard literature on this topic, the convergence is considered with respect to the $L^2$ structure induced by the invariant faithful density, say $\rho$, i.e. one usually defines a scalar product $\<x,y\>_\rho={\rm Tr}(\rho x^*y)$, and consequently a norm $\|x\|^2_{2,\rho}={\rm Tr}(\rho x^*x)$, for $x$ and $y$ in $B(\Ha)$. 
Then the suitable $L^2(\Ha,\rho)$ space will be the completion of $B(\Ha)$ with respect to this norm.
\begin{itemize}
\item The first fact worth to be noticed is that the good behavior of the conditional expectation $F$ given by Theorem \ref{thm:cjd} imply that it gives an orthogonal decomposition in $L^2(\Ha,\rho)$ and this orthogonality is preserved by $\Phi$, in the sense that
$$
\<Fx,(1-F)y\>=\<\Phi(Fx),\Phi((1-F)y)\>=0
\qquad \forall \, x,y.
$$

Indeed, for any $n$,
\begin{eqnarray*}
\<\Phi^n(Fx),\Phi^n((1-F)y)\>
&=& {\rm Tr} (\rho \Phi^n(Fx^*)\Phi^n((1-F)y))
\\
&=& {\rm Tr} (\rho \Phi^n((Fx^*)((1-F)y))
\\
&=& {\rm Tr} (\rho ((Fx^*)((1-F)y))
\\
&=& \<Fx,(1-F)y\>
\end{eqnarray*}
The previous, for $n=1$, gives the first equality and we can deduce the second taking $n=md+k$, where $d$ is the period of the channel and $k=0,...d-1$, since we can also write
\begin{eqnarray*}
\<\Phi^n(Fx),\Phi^n((1-F)y)\>
&=& {\rm Tr} (\rho \Phi^k(Fx^*)\Phi^{md+k}((1-F)y))
\rightarrow_{m\rightarrow \infty} 0
\end{eqnarray*}
and then repeat for all possible $k$.

\item Moreover $\Phi$ is contractive also with respect to this new norm due to the Schwarz property
$$
\| \Phi(x)\|_{2,\rho}^2
= {\rm Tr} (\rho \Phi(x^*)\Phi(x))
\le  {\rm Tr} (\rho \Phi(x^*x))
= {\rm Tr} (\rho x^*x)) =\| x\|_{2,\rho}^2;
$$
and it is isometric on the range $\Ne$ of $F$ since, for $x$ in $\Ne$, the multiplicativity property will transform the inequality in the previous line into an equality.
\end{itemize}

In this context, we could define the decoherence spectral gap as the maximum $\epsilon$ such that
$$
\|\Phi^n(x)-\Phi^n(Fx)\|_{2,\rho}\le e^{-\epsilon n}\|x-Fx\|_{2,\rho}
\qquad \mbox{for all $x$ and $n$}.
$$
The existence of a strictly positive $\epsilon$, uniform in $n$ and $x$, would give the uniform exponential convergence of the evolution to the decoherence-free algebra. Obviously, such optimal $\epsilon$ can also be characterized as 
$$
{\epsilon} = \inf_{n> 0,x\in{B(\Ha)}}\frac{1}{n}\ln \frac{\|\Phi^n(x)-\Phi^n(Fx)\|_{2,\rho}}{\|x-Fx\|_{2,\rho}}
= \inf_{n> 0,x \perp \Ne}\frac{1}{n}\ln \frac{\|\Phi^n(x)\|_{2,\rho}}{\|x\|_{2,\rho}}
$$

A common technique for estimating the usual spectral gap for finite classical Markov chains consists in using continuous time generators. The same ideas can be applied to our case of interest, with the proper adaptation.
Briefly:
\\- first, we consider the operator $\Phi^d$, where $d$ is the period of the channel, so that the algebra $\Ne$ is the fixed space of the new operator,
\\- second, we introduce the infinitesimal Lindblad generator ${\mathcal L}:=(\Phi^d-1)$, inheriting the invariant faithful state of $\Phi$, and we compute the spectral gap of ${\mathcal L}$ with the usual standard techniques.

Some interesting similar problems in the continuous setting have been studied in \cite{bardet2017} and \cite{BR2018}.
\end{rem}

\subsection{Reducible maps}\label{sec:reducible}

By Corollary \ref{coro:range}, if  $\Phi$  admits a faithful normal invariant state $\rho$,
the decoherence free algebra $\Ne$ is the range of a faithful normal  conditional expectation $E_\Ne$ and consequently must be atomic.
 On the other hand,  it is known \cite{frigerioverri, KN} that the limit
\begin{equation}\label{eq:limit}
\lim_n \frac1n \sum_{k=0}^{n-1}\Phi^k
\end{equation}
exists in the point-ultraweak topology and gives a faithful normal conditional expectation $E_\Fe$ onto $\Fe$, 
satisfying 
\begin{equation}\label{eq:condexp_min}
E_\Fe\circ\Phi=\Phi\circ E_\Fe=E_\Fe.
\end{equation}
Hence $\Fe$ is an atomic von Neumann subalgebra of $\Ne$. In this section, we study  the structure of the
channel induced by the two algebras $\Fe$ and $\Ne$.

First of all, we explain, in Lemma \ref{lemma:condexp}, how the minimal central projections of either $\Fe$ or $\Ne$ are related to a better description of the corresponding algebra, the action of the associated conditional expectation and its
invariant states. Then, in Proposition \ref{prop:dfs} and Theorem \ref{prop:fixed_multi}, we detail the study of the channel with respect to the structural properties of $\Fe$ and $\Ne$. This will lead to a simplified characterization of the channel, its  Kraus operators and  invariant states. The simplification essentially follows from the fact that $\Phi$ can be described by a collection of ``lower dimensional'' operators.

We first describe a general form of a faithful normal conditional expectation on $B(\Ha)$.

\begin{lemma}\label{lemma:condexp} Let $E:B(\Ha)\to B(\Ha)$ be a faithful normal conditional expectation and let $\mathcal R=E(B(\Ha))$ be its range. Then
\begin{enumerate}
\item[(i)] $\mathcal R$ is atomic, so that there is a direct sum decomposition $\Ha=\bigoplus_j \Ha_j$, Hilbert spaces $\Ha^L_j$, $\Ha^R_j$ and unitaries $U_j:\Ha_j\to \Ha_j^L\otimes \Ha^R_j$ such that 
\[
\mathcal R= \bigoplus_j U_j^* (B(\Ha_j^L)\otimes I_{\Ha_j^R})U_j;
\]
\item[(ii)]  the orthogonal projections $P_j$  onto $\Ha_j$ are minimal central projections in $\mathcal R$ and 
\[
E(A)=\sum_j E(P_jAP_j);
\]
\item[(iii)] identifying $P_jB(\Ha)P_j$ with $B(\Ha_j)$, the restriction of $E$ to $P_jB(\Ha)P_j$ is determined by
\[
E(U_j^*(A_j\otimes B_j)U_j)=U_j^*(A_j\otimes \Tr [\rho_jB_j] I_{\Ha_j^R})U_j,
\]
where each $\rho_j\in \states (\Ha^R_j)$ is a (fixed) faithful normal state 
\item[(iv)] a normal state $\omega\in \states(\Ha)$ is invariant under $E$ if and only if 
\[
\omega=\oplus_j\lambda_j U_j^*(\omega^L_j\otimes \rho_j)U_j,
\]
where $\rho_j$ are as in (iii),  $\lambda_j\in [0,1]$, $\sum_j\lambda_j=1$  and  $\omega^L_j\in \mathfrak S(\Ha_j^L)$.
\end{enumerate}

\end{lemma}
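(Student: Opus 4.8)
The plan is to reduce everything to a single block and to exploit two features of $E$: that it is a bimodule map over its range $\mathcal R$ (the defining property of a conditional expectation, $E(axb)=aE(x)b$ for $a,b\in\mathcal R$), and that it is faithful and normal. For (i) I would invoke the fact that the range of a normal conditional expectation on $B(\Ha)$ is an atomic von Neumann algebra, which is exactly the input used in Corollary \ref{coro:range} (see \cite{tomiyama}). Once atomicity is known, the displayed decomposition is the standard structure theorem for atomic type~I von Neumann algebras: writing $\mathcal R$ as the direct sum of its homogeneous summands indexed by the minimal central projections $P_j$, each corner $P_j\mathcal R P_j$ acting on $\Ha_j=P_j\Ha$ is spatially isomorphic, via a unitary $U_j\colon\Ha_j\to\Ha_j^L\otimes\Ha_j^R$, to $B(\Ha_j^L)\otimes I_{\Ha_j^R}$.

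For (ii) I would note that the $P_j$ are central in $\mathcal R$ and lie in $\mathcal R=E(B(\Ha))$, so $E(P_j)=P_j$. The bimodule property with $a=P_j$, $b=P_k$ gives $E(P_jAP_k)=P_jE(A)P_k$; since $E(A)\in\mathcal R$ and the $P_j$ are central orthogonal projections in $\mathcal R$, this vanishes for $j\ne k$ and equals $P_jE(A)P_j$ for $j=k$. Summing over $j,k$ and using $\sum_jP_j=I$ together with normality of $E$ then yields $E(A)=\sum_jE(P_jAP_j)$; in particular $E$ leaves each block $P_jB(\Ha)P_j\cong B(\Ha_j)$ invariant, which is what lets me argue one block at a time.

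For (iii) I would work inside one block, studying the faithful normal conditional expectation $E_j$ from $B(\Ha_j^L\otimes\Ha_j^R)$ onto $B(\Ha_j^L)\otimes I_{\Ha_j^R}$ obtained by conjugating $E|_{P_jB(\Ha)P_j}$ with $U_j$. The key observation is that for every $a\in B(\Ha_j^L)$ the operator $a\otimes I$ commutes with $I\otimes B$, so the bimodule property forces $E_j(I\otimes B)$ to commute with all of $B(\Ha_j^L)\otimes I$; being itself in $B(\Ha_j^L)\otimes I$, it lies in the center, hence $E_j(I\otimes B)=\phi_j(B)\,I$ for a scalar-valued map $\phi_j$. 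Positivity, unitality and normality make $\phi_j$ a normal state on $B(\Ha_j^R)$, and faithfulness of $E_j$ forces $\phi_j$ faithful, so $\phi_j(B)=\Tr[\rho_jB]$ for a faithful normal density $\rho_j\in\states(\Ha_j^R)$. Factoring $A\otimes B=(A\otimes I)(I\otimes B)$ and applying the bimodule property once more gives the stated formula on elementary tensors, and normality extends it to all of $B(\Ha_j)$.

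Finally, for (iv) I would first show that an $E$-invariant normal state is block diagonal: applying invariance to $A=P_jA'P_k$ and using $E(P_jA'P_k)=0$ for $j\ne k$ gives $P_k\omega P_j=0$, so $\omega=\bigoplus_j\omega^{(j)}$ with $\omega^{(j)}=P_j\omega P_j\ge 0$ and $\lambda_j:=\Tr[\omega^{(j)}]$ summing to $1$. Writing $\sigma_j=U_j\omega^{(j)}U_j^*$ and testing invariance against $A_j\otimes B_j$, the formula from (iii) turns the condition into $\Tr[\sigma_j(A_j\otimes B_j)]=\Tr[\rho_jB_j]\,\Tr[\sigma_j^L A_j]$ for all $A_j,B_j$, where $\sigma_j^L$ is the partial trace of $\sigma_j$ over $\Ha_j^R$; since elementary tensors are weak*-total this forces $\sigma_j=\sigma_j^L\otimes\rho_j$, and normalizing $\sigma_j^L=\lambda_j\omega_j^L$ gives the claimed form, the converse being a direct check. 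I expect the slice-map step (iii) to be the main obstacle: one must justify that the center argument and the reduction to a single state $\rho_j$ survive in infinite dimension, where faithfulness (rather than any finite-dimensional tracial shortcut) is precisely what guarantees that $\rho_j$ is faithful and the decomposition non-degenerate; the remaining care is simply to keep all direct sums and the tensor factorization in (iv) ultraweakly controlled via normality of $E$.
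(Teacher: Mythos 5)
Your proposal is correct and follows essentially the same route as the paper: atomicity via Tomiyama's theorem plus the standard spatial structure of type I factors for (i), the bimodule property of $E$ over $\mathcal R$ for (ii), the commutant/center argument showing $E(U_j^*(I\otimes B_j)U_j)$ is a scalar multiple of $P_j$ defining a faithful normal state $\rho_j$ for (iii), and block-diagonality plus testing against elementary tensors for (iv). The only difference is that you spell out the block-diagonality and tensor-factorization steps in (iv) more explicitly than the paper, which simply asserts them as clear.
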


\begin{proof}
 The range $\mathcal R$ is atomic by \cite{tomiyama}. Let $\{P_j\}$ be the minimal central projections in $\mathcal R$ and let $\Ha_j=P_j\Ha$. Since $\mathcal R P_j$ is a type I factor acting on $\Ha_j$, there are Hilbert spaces $\Ha_j^L$, $\Ha_j^R$ and a unitary $U_j:\Ha_j\to \Ha_j^L\otimes \Ha_j^R$ such that 
\[
\mathcal R P_j=U_j^*(B(\Ha_j^L)\otimes I_{\Ha_j^R})U_j,
\]
 this proves (i). By the properties of conditional expectations, 
\[
E(P_jAP_k)=P_jE(A)P_k=P_jP_kE(A)
\]
 for any $A\in B(\Ha)$, this proves  (ii). It also follows that under the identification in (iii), $E(B(\Ha_j))\subseteq B(\Ha_j)$ for all $j$ and the restriction  of $E$ is a faithful normal conditional expectation on $B(\Ha_j)$, with range $U_j^*(B(\Ha_j^L)\otimes I_{\Ha_j^R})U_j$. Let  $B_j\in B(\Ha_j^R)$, then the multiplicative property of $E$ implies that 
 $E(U_j^*(I\otimes B_j)U_j)$ must  commute with all elements in $U_j^*(B(\Ha_j^L)\otimes I))U_j$. It follows  that 
there is some $c_j(B_j)\in \mathbb C$ such that $E(U_j^*(I\otimes B_j)U_j)=c_j(B_j)P_j$. It is clear that $B_j\mapsto c_j(B_j)$ defines a normal state on $B(\Ha_j^R)$ with corresponding density $\rho_j\in \states(\Ha^R_j)$, which must be faithful since $E$ is. This proves (iii).

For (iv), let $\omega\in \states(\Ha)$. It is clear that if $E_*(\omega)=\omega$, then we must have $\omega=\sum_j \lambda_j\omega_j$ for some $\omega_j\in \states (\Ha_j)$
 and $\lambda_j=\Tr [P_j\omega]$. Let $\omega^L_j\in \states(\Ha_j^L)$ be the partial trace  $\Tr_{\Ha_j^R}[U_j\omega_j U_j^*]$. Then $\omega_j$, and consequently also $\omega$, is invariant under $E$ if and only if for all $A_j\in B(\Ha^L_j)$ and $B_j\in B(\Ha^R_j)$, 
\[
\omega_j= E^*(\omega_j)=U_j^*(\omega^L_j\otimes \rho_j)U_j
\]
and this concludes the proof.
\end{proof}

The previous lemma, applied with $\mathcal R$ equal to either $\Fe$ or $\Ne$, will give us two different decompositions of the Hilbert space $\Ha$, into ranges of minimal central projections. We can better detail these two decompositions separately, exploiting their peculiar features, but we mainly want to fit the two together, in order to optimize our knowledge.  
Therefore, searching for the finest decomposition of $\Ha$ which contains both the previous decompositions takes us to consider the algebra
$$
\mathcal Z=\mathcal Z(\Fe)\cap \mathcal Z(\Ne),
$$
where $\mathcal Z(\Fe)$ and $\mathcal Z(\Ne)$ denote the centers of $\Fe$ and $\Ne$ respectively.
Clearly, $\mathcal Z$ is a discrete abelian von Neumann algebra and the minimal projections in $\mathcal Z$, say $\{Z_1,Z_2,\dots\}$, will be denumerable and give a resolution of the identity. We shall call any $Z_i$ a MFNC (minimal and $\Fe$/$\Ne$-central) projection.
Identifying $Z_iB(\Ha)Z_i$ with $B(Z_i\Ha)$, we have
$\Phi(B(Z_i \Ha))\subseteq  B(Z_i\Ha)$, so that
$\Phi_i:=\Phi|_{B(Z_i\Ha)}$ is a quantum channel on $B(Z_i\Ha)$, with  $\Ne(\Phi_i)=\Ne Z_i=:\Ne_i$; 
$\Phi_i$ will be denominated a MFNC component of the channel.

\begin{defi}\label{def:period2}
Period of a MFNC component.
\\
Let $\Phi$ be a MFCN component (or equivalently a quantum channel for which the algebra $\mathcal Z=\mathcal Z(\Fe)\cap \mathcal Z(\Ne)$ is trivial) 
with an invariant faithful density. Then the period of $\Phi$ is the dimension $d$ of the algebra $\mathcal Z(\Ne)$.
\\
Further, the collection of minimal projections $\{Q_m\}_{m\in {\mathbb Z}_d}\in {\mathcal Z}(\Ne)$ summing up to the identity and such that $\Phi(Q_m)=Q_{m- 1}$ is called cyclic resolution (or decomposition) for $\Phi$.
\end{defi}

Similarly as for the irreducible case, the first point will be to show that the period and the cyclic resolution of a MFNC component are well defined and unique. This will immediately be proven in Proposition \ref{prop:central}.

\begin{rem} 

1. If $\Phi$ is irreducible, $\mathcal Z$ is trivial, so that $\Phi$ itself is the unique MFNC component. Then Definitions \ref{def:period} and \ref{def:period2} coincide and will give the same period and cyclic resolution since ${\mathcal Z}(\Ne)=\Ne$. 

2. For reducible MFNC components $\Phi$, we will see later in Theorem \ref{prop:fixed_multi} that 
 $d$ is the period of all irreducible restrictions of the component $\Phi$.
\end{rem}

\begin{prop}\label{prop:central} 
Let $\Phi$ be a quantum channel with an invariant faithful density and let $\{Z_i\}_i$ and $\{\Phi_i\}_i$ be its MFCN projections and components as previously introduced.
For each MFNC component $\Phi_i$\\
- the dimension $d_i$ of ${\mathcal Z}(\Ne_i)$ is finite,
\\
- the minimal projections $Q^i_0,\dots, Q^i_{d_i-1}$ of the abelian algebra ${\mathcal Z}(\Ne_i)$ can be numbered in such a way to form the unique cyclic resolution for 
  $\Phi_i$, i.e. 
 \[
 Z_i=\sum_{m=0}^{d_i-1}Q^i_m
 \qquad \mbox{and} \qquad
\Phi(Q^i_m)=Q^i_{m - 1}
 \]
 (where the subtraction on indices is modulo $d_i$).
\end{prop}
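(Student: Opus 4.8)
The plan is to fix a single MFNC component $\Phi_i$, regarded as a channel on $B(Z_i\Ha)$, and to exploit two ingredients. First, $\Phi_i$ inherits a faithful normal invariant state, namely the suitably normalized compression $Z_i\rho Z_i$: since $Z_i\in\Fe$ one has $Z_i\in\Me$, so the multiplicative property forces $\Phi(Z_iAZ_i)=Z_i\Phi(A)Z_i$ and hence $\Phi_i$ is the genuine corner restriction, for which invariance and faithfulness of $Z_i\rho Z_i$ follow directly from those of $\rho$. By Proposition \ref{prop:faithful_invariant} the restriction $\Phi_i|_{\Ne_i}$ is then a $*$-automorphism. Second, the algebra $\mathcal Z(\Fe_i)\cap\mathcal Z(\Ne_i)=\mathbb C Z_i$ is trivial, since $Z_i$ is a minimal projection of $\mathcal Z=\mathcal Z(\Fe)\cap\mathcal Z(\Ne)$ and, $Z_i$ being central in both $\Fe$ and $\Ne$, one has $\mathcal Z(\Fe_i)=\mathcal Z(\Fe)Z_i$ and $\mathcal Z(\Ne_i)=\mathcal Z(\Ne)Z_i$. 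A $*$-automorphism preserves the center and sends minimal central projections to minimal central projections, so $\Phi_i$ permutes the atoms of the atomic abelian algebra $\mathcal Z(\Ne_i)$, and the whole statement reduces to analysing this permutation.

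Next I would show that every orbit of this permutation is finite. If $R$ is an atom of $\mathcal Z(\Ne_i)$ then each $\Phi_i^k(R)$ is again such an atom, and invariance of $\rho$ gives $\Tr[\rho\,\Phi_i^k(R)]=\Tr[\rho\,R]=:w$, a constant along the orbit, with $w>0$ by faithfulness. Distinct atoms are mutually orthogonal, so $\sum_k\Tr[\rho\,\Phi_i^k(R)]\le\Tr[\rho\,Z_i]<\infty$; an infinite orbit would make the left-hand side diverge, a contradiction. Hence every orbit is finite.

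I would then use triviality of $\mathcal Z$ to show there is a single orbit. For a finite orbit $\{R,\Phi_i(R),\dots,\Phi_i^{d-1}(R)\}$, the sum $P=\sum_{k=0}^{d-1}\Phi_i^k(R)$ is a projection fixed by $\Phi_i$, hence $P\in\Fe_i$, and being a sum of central atoms it lies in $\mathcal Z(\Ne_i)$. As it is central in $\Ne_i\supseteq\Fe_i$ and lies in $\Fe_i$, it is also in $\mathcal Z(\Fe_i)$, so $P\in\mathcal Z(\Fe_i)\cap\mathcal Z(\Ne_i)=\mathbb C Z_i$; being a nonzero projection, $P=Z_i$. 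Thus the orbit of $R$ exhausts all atoms, $\mathcal Z(\Ne_i)$ is finite-dimensional with $d_i=\dim\mathcal Z(\Ne_i)$ equal to the orbit length, and $\Phi_i$ acts on its atoms as a single $d_i$-cycle. Numbering the atoms along this cycle so that $\Phi(Q^i_m)=Q^i_{m-1}$ yields a cyclic resolution; since the atoms are intrinsic to $\mathcal Z(\Ne_i)$ and the cycle structure is rigid, the numbering is unique up to a cyclic shift of the labels, which is the asserted uniqueness (matching Corollary \ref{coro:W1}).

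I expect the main obstacle to be the finiteness step: converting the qualitative fact that $\Phi_i$ merely permutes atoms into a genuine bound, for which the equal-weight property of the faithful invariant state along orbits, combined with separability (countably many mutually orthogonal atoms of finite total mass), is exactly the required leverage. The rest is bookkeeping: verifying that $Z_i\rho Z_i$ is a faithful invariant state for $\Phi_i$, that $\mathcal Z(\Fe_i)\cap\mathcal Z(\Ne_i)$ is trivial, and that an orbit sum is simultaneously $\Phi_i$-fixed and central in $\Ne_i$ so that it lands in this trivial algebra.
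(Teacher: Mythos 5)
Your proof is correct and follows essentially the same route as the paper: the restriction to $\Ne_i$ is a $*$-automorphism permuting the minimal central projections, orbits are finite because orthogonal projections in an orbit all carry the same strictly positive $\rho$-weight, and the orbit sum is a $\Phi$-fixed element of $\mathcal Z(\Fe)\cap\mathcal Z(\Ne)$ dominated by the minimal projection $Z_i$, hence equal to it. You merely make explicit two points the paper leaves implicit (the finiteness argument behind ``$d_i<\infty$'' and the uniqueness of the resolution up to a cyclic shift), which is fine.
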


\begin{proof}  Let $Q^i_0,Q^i_1,\dots$ be minimal central projections in $\Ne_i$, then clearly all  $Q^i_m$ are minimal central 
projections in $\Ne$ and we have $\sum_mQ^i_m=Z_i$. Since the restriction of $\Phi_i$ to $\Ne_i$ is a *-automorphism, $\Phi(Q^i_m)=
\Phi_i(Q^i_m)$ is a minimal central projection as well. Put
\[
d_i:=\inf\{ m, \Phi^m(Q^i_0)=Q^i_0\},
\]
then since $\Phi$ preserves the  faithful state $\rho$, $d_i<\infty$. Assume that the projections  are numbered  so that 
\[
Q^i_{d_i-m}=\Phi^m(Q^i_0),\qquad m=0,\dots, d_i-1.
\]
Put $Q^i:=\sum_{m=0}^{d_i-1} Q^i_m$, then obviously $Q^i\in \mathcal Z(\Ne)$ and $\Phi(Q^i)=Q^i$, so that $Q^i\in \mathcal Z$. 
Since also $Q^i\le Z_i$ and $Z_i$ is minimal in $\mathcal Z$, we must have $Q^i=Z_i$.
\end{proof}

We now describe the action of $\Phi_i$ on one component $\Ne_{i}$. 
For simplicity, we drop the index $i$, this corresponds to assuming that there is only one such component, so that $\mathcal Z$ is trivial, $d$ is the period of $\Phi$ and $Q_0,\dots, Q_{d-1}$ is the cyclic resolution of $\Phi$ (as in Definition \ref{def:period2}).

Since $\Ne$ is the range of $E_\Ne$, we may use Lemma \ref{lemma:condexp} to describe its structure.
Let us denote $\Ka_m:=Q_m\Ha$, then there are 
Hilbert spaces $\Ka_{m}^L, \Ka_{m}^R$, $m=0,\dots,d-1$ and unitaries $S_{m}: \Ka_{m}\to\Ka_{m}^L\otimes \Ka_{m}^R$ such that 
\begin{equation}\label{eq:dfs}
\Ne=\bigoplus_{m=0}^{d-1} S_{m}^*(B(\Ka_{m}^L)\otimes I_{m}^R)S_{m}.
\end{equation}
 Here we put $I_m^R=I_{\Ka_{m}^R}$ to simplify notations, we will use a similar notation for $I_{\Ka_{m}^L}$. 
 Let also $\rho_m\in \states(\Ka_{m}^R)$ denote  the states determining  $E_\Ne$, as in Lemma \ref{lemma:condexp} (iii).
The following proposition clarifies some aspects in the structure of the channel $\Phi$ and its action on the DFA.

\begin{prop}\label{prop:dfs} Assume that $\mathcal Z$ is trivial and let the period of $\Phi$ be $d$. Then there are 
\begin{enumerate}
\item[(a)] unitaries $T_m:\mathcal K_m^L\to \mathcal  K^L_{m- 1}$, $m\in{\mathbb Z}_d$,
\item [(b)] quantum channels $\Xi_m: B(\Ka^R_m)\to B(\Ka^R_{m- 1})$, $m\in{\mathbb Z}_d$,
\end{enumerate}
such that for all $m$, 
\begin{enumerate}
\item[(i)] $(\Xi_m)_*(\rho_{m-1}) = \rho_{m}$;
\item[(ii)] $\Xi_{m- (d-1)}\circ\dots \circ \Xi_{m- 1}\circ\Xi_m$ is irreducible and aperiodic;
\item[(iii)] the restriction  $\Phi|_{B(\Ka_m)}$ is a quantum channel $B(\Ka_m)\to B(\Ka_{m- 1})$, determined as
\[
\Phi(S_m^*(A_m\otimes B_m)S_m)=S_{m- 1}^*(T_mA_mT_m^*\otimes \Xi_m(B_m))S_{m- 1};
\]
\item[(iv)] $\Phi$ has a Kraus representation $\Phi(A)=\sum_k V_k^*AV_k$, such that
\[
V_k=\sum_m S_m^*(T_m^*\otimes L_{m,k})S_{m- 1},
\]
where $\Xi_m=\sum_k L_{m,k}^*\cdot L_{m,k}$ is a Kraus representation of $\Xi_m$. 
\end{enumerate}

\end{prop}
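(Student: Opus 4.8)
The plan is to transport $\Phi$ through the unitaries $S_m$ and read off a block structure from the automorphism and multiplicative properties of $\Ne$. First I would observe that each cyclic projection $Q_m$ lies in $\Ne\subseteq\Me$ with $\Phi(Q_m)=Q_{m-1}$, so by the multiplicative property \eqref{MultProp}, $\Phi(Q_mXQ_m)=Q_{m-1}\Phi(X)Q_{m-1}$; hence $\Phi$ restricts to a unital completely positive map $B(\Ka_m)\to B(\Ka_{m-1})$. Conjugating, I set $\Psi_m:=S_{m-1}\,\Phi(S_m^*\,\cdot\,S_m)\,S_{m-1}^*$, a channel from $B(\Ka_m^L\otimes\Ka_m^R)$ to $B(\Ka_{m-1}^L\otimes\Ka_{m-1}^R)$.

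Since $\Phi|_\Ne$ is a $*$-automorphism (Proposition \ref{prop:faithful_invariant}(ii)) taking $\Ne\cap B(\Ka_m)$ onto $\Ne\cap B(\Ka_{m-1})$, the restriction of $\Psi_m$ to $B(\Ka_m^L)\otimes I_m^R$ is a $*$-isomorphism onto $B(\Ka_{m-1}^L)\otimes I_{m-1}^R$; as a spatial isomorphism of type I factors it is implemented by a unitary $T_m:\Ka_m^L\to\Ka_{m-1}^L$, which gives (a) and $\Psi_m(A_m\otimes I)=T_mA_mT_m^*\otimes I$. Because $A_m\otimes I$ corresponds to an element of $\Ne\subseteq\Me$, the bimodule form of \eqref{MultProp} yields $\Psi_m((A_m\otimes I)Y)=(T_mA_mT_m^*\otimes I)\,\Psi_m(Y)$ and the symmetric identity on the right, for every $Y$. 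Taking $Y=I\otimes B_m$ shows that $\Psi_m(I\otimes B_m)$ commutes with all of $B(\Ka_{m-1}^L)\otimes I$, hence lies in its commutant $I\otimes B(\Ka_{m-1}^R)$; this defines a unital completely positive map $\Xi_m:B(\Ka_m^R)\to B(\Ka_{m-1}^R)$, proving (b), and combining the two bimodule identities gives $\Psi_m(A_m\otimes B_m)=T_mA_mT_m^*\otimes\Xi_m(B_m)$, which is (iii). Property (i) then follows from the intertwining $\Phi\,E_\Ne=E_\Ne\,\Phi$ of Corollary \ref{coro:range}: evaluating both sides on $S_m^*(A_m\otimes B_m)S_m$, using Lemma \ref{lemma:condexp}(iii) and $\Xi_m(I)=I$, gives $\Tr[\rho_mB_m]=\Tr[\rho_{m-1}\Xi_m(B_m)]$ for all $B_m$, i.e. $(\Xi_m)_*(\rho_{m-1})=\rho_m$.

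For the Kraus form (iv) I would fix Kraus operators $L_{m,k}$ of each $\Xi_m$, padding the families with zero operators so a single index $k$ serves all $m$, and define $V_k$ by the stated formula. Using that the maps $S_m^*(\cdot)S_{m-1}$ have pairwise orthogonal ranges, a direct computation gives $\sum_k V_k^*\,S_m^*(A_m\otimes B_m)S_m\,V_k=S_{m-1}^*\big(T_mA_mT_m^*\otimes\sum_k L_{m,k}^*B_mL_{m,k}\big)S_{m-1}=\Phi(S_m^*(A_m\otimes B_m)S_m)$, so $\{V_k\}$ reproduces $\Phi$; the normalization $\sum_k V_k^*V_k=I$ then follows from $\Phi(I)=I$.

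The remaining point (ii) reduces cleanly to a statement about the decoherence free algebra of $\Xi^{(d)}_m$. Iterating (iii) $d$ times, and extending from product elements by normality and linearity, gives $\Phi^d|_{B(\Ka_m)}(A_m\otimes B_m)=T'A_mT'^*\otimes\Xi^{(d)}_m(B_m)$ with $T'=T_{m-(d-1)}\cdots T_m$ unitary and $\Xi^{(d)}_m=\Xi_{m-(d-1)}\circ\cdots\circ\Xi_m$ the cyclic composition; chaining (i) shows that $\rho_m$ is a faithful invariant state for $\Xi^{(d)}_m$. The argument in Proposition \ref{prop:period_irred}(1), which uses only the faithful invariant state, gives $\Ne(\Phi^d)=\Ne$, and since the minimal central projections of $\Ne$ are exactly $Q_0,\dots,Q_{d-1}$ one has $\Ne(\Phi^d|_{B(\Ka_m)})=\Ne\cap B(\Ka_m)=S_m^*(B(\Ka_m^L)\otimes I)S_m$. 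Noting that $B\mapsto I\otimes B$ carries $\Me((\Xi^{(d)}_m)^n)$ into $\Me((\Phi^d|_{B(\Ka_m)})^n)$ for every $n$, I would deduce that $B\in\Ne(\Xi^{(d)}_m)$ forces $I\otimes B\in B(\Ka_m^L)\otimes I$, hence $B$ scalar; thus $\Ne(\Xi^{(d)}_m)=\mathbb CI$. Finally, because $\Xi^{(d)}_m$ has a faithful invariant state, $\Fe(\Xi^{(d)}_m)\subseteq\Ne(\Xi^{(d)}_m)=\mathbb CI$ (Proposition \ref{prop:fixed} and the following remark), so $\Xi^{(d)}_m$ is irreducible, and an irreducible channel with trivial decoherence free algebra has period one by Corollary \ref{coro:W1}, i.e. it is aperiodic. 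The step I expect to require the most care is precisely this transfer of DFA-triviality to $\Xi^{(d)}_m$: one must correctly combine $\Ne(\Phi^d)=\Ne$, the identification of $\Ne$ with the single cyclic block structure \eqref{eq:dfs} (which is where the triviality of $\mathcal Z$ enters, guaranteeing that $Q_0,\dots,Q_{d-1}$ is a single $d$-cycle), and the compatibility of the embedding $B\mapsto I\otimes B$ with multiplicative domains; once $\Ne(\Xi^{(d)}_m)=\mathbb CI$ is in hand, irreducibility and aperiodicity are formal.
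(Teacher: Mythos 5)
Your treatment of (a), (b), (i), (ii) and (iii) follows essentially the same route as the paper: the unitary $T_m$ comes from the spatial implementation of the $*$-isomorphism $\Phi|_{Q_m\Ne}$, the channel $\Xi_m$ from the bimodule form of \eqref{MultProp} and a commutant argument, (i) from $E_\Ne\Phi=\Phi E_\Ne$, and (ii) from the observation that a fixed or decoherence-free projection of the cyclic composition would produce an element of $\Ne\cap B(\Ka_m)=S_m^*(B(\Ka_m^L)\otimes I)S_m$ of the form $S_m^*(I\otimes R_m)S_m$, forcing $R_m$ trivial. Your version of (ii) is in fact more detailed than the paper's (which compresses the transfer of DFA-triviality into one sentence), and the extra care you flag about $\Ne(\Phi^d)=\Ne$ and the compatibility of $B\mapsto I\otimes B$ with multiplicative domains is exactly the right care to take.

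Part (iv), however, has a genuine gap. You build the $V_k$ bottom-up from arbitrarily chosen (and arbitrarily zero-padded) Kraus families $\{L_{m,k}\}_k$ of the $\Xi_m$, and you verify $\sum_k V_k^*AV_k=\Phi(A)$ only for $A=S_m^*(A_m\otimes B_m)S_m$, i.e.\ on the block-diagonal subalgebra $\bigoplus_m B(\Ka_m)$. A normal CP map is \emph{not} determined by its restriction to the diagonal blocks (compare the identity with the pinching $A\mapsto\sum_mQ_mAQ_m$), so "so $\{V_k\}$ reproduces $\Phi$" does not follow: on an off-diagonal block $Q_mAQ_n$, $m\ne n$, the sum $\sum_kV_k^*Q_mAQ_nV_k$ pairs the $k$-th Kraus operator of $\Xi_m$ with the $k$-th Kraus operator of $\Xi_n$, and this pairing (hence the off-diagonal action) changes if you permute the index $k$ for a single $m$, while the diagonal action does not. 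The paper avoids this by arguing in the opposite direction: it starts from an actual Kraus representation $\Phi=\sum_kV_k^*\cdot V_k$, shows one may take $V_k=\sum_mV_{k,m}$ with $V_{k,m}=Q_mV_kQ_{m-1}$, notes that $\{V_{k,m}\}_k$ is then a Kraus family for $\Phi|_{B(\Ka_m)}$, and uses the unitary freedom between two Kraus representations of the same map to write $V_{k,m}=S_m^*(T_m^*\otimes L_{m,k})S_{m-1}$ with $L_{m,k}=\sum_l\eta^m_{k,l}K_{m,l}$ a Kraus family for $\Xi_m$. To repair your argument you would need to make this top-down identification (or otherwise prove that a correctly correlated choice of the $L_{m,k}$ across $m$ exists); as written, the existence claim in (iv) is not established.
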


\begin{rem}
The results in this proposition are in some points parallel to what discussed in \cite{DFSU} for continuous time Markov semigroups: what is intrinsically different here in our paper is the presence of a supplementary decomposition due to the period, which cannot appear in continuous time.
\end{rem}

\begin{proof}

Let $A_m\in B(\Ka_{m}^L)$. Since  $\Phi(Q_{m}\Ne)=Q_{m- 1}\Ne$,
 we have 
\[
\Phi(S_{m}^*(A_m\otimes I_m^R)S_{m})=S_{m- 1}^*(A_m'\otimes I_{m- 1}^R)S_{m- 1}
\]
 for some $A_m'\in B(\Ka_{m- 1}^L)$ and the map $A_m\mapsto A_m'$ defines a 
*-isomorphism
 of $B(\Ka_{m}^L)$ onto $B(\Ka_{m- 1}^L)$. Hence there is a unitary operator $T_{m}:\Ka_{m}^L\to \Ka_{m- 1}^L$, such that
$A_m'=T_mA_mT_m^*$. Moreover, by the multiplicativity properties  of $\Phi$ on $\Ne$ (see eq. \eqref{MultProp}), we have, for all $B_m\in B(\Ka_m^R)$,
\begin{align*}
\Phi(S_{m}^*(A_m\otimes B_m)S_{m})&=\Phi(S_m^*(A_m\otimes I_{m}^R)S_m)\Phi(S_m^*(I_{m}^L \otimes B_m)S_m)\\
&= \Phi(S_m^*(I_{m}^L \otimes B_m)S_m)\Phi(S_m^*(A_m\otimes I_{m}^R)S_m).
\end{align*}
It follows that $\Phi(S_m^*(I_{m}^L\otimes B_m)S_m)$ is an element in $B(\Ka_{m- 1})$, commuting with all elements in $S_{m- 1}^*(B(\Ka_{m- 1}^L)\otimes I_{m- 1}^R)S_{m- 1}$,
 so that 
\[
\Phi(S_m^*(I_{m}^L \otimes B_m)S_m)= S_{m- 1}^*(I_{m- 1}^L \otimes B'_{m})S_{m- 1}
\]
 for some $B_m'\in B(\Ka_{m- 1}^R)$. 
 It is clear that $B_m\mapsto B_m'$ defines a quantum channel $\Xi_m: B(\Ka^R_m)\to B(\Ka^R_{m- 1})$. 
Putting all together proves (iii). 

To see (ii), let $\tilde \Xi_m=\Xi_{m- (d-1)}\circ\dots \circ \Xi_{m- 1}\circ\Xi_m$ be the given composition and let $R_m\in B(\Ka^R_m)$ be a  projection that is either fixed or decoherence-free for  $\tilde \Xi_m$. Then 
$S_m^*(I_m^L\otimes R_m)S_m$ is in $\Ne$, so that $R_m$ must be trivial. 

Further, for (i) note that by Corollary \ref{coro:range},  $E_\Ne$ commutes with  $\Phi$.
 For  $B_m\in B(\Ka^R_m)$, we have by Lemma \ref{lemma:condexp}
\[
\Phi\circ E_\Ne(S_m^*(I_m^L\otimes B_m)S_m)=\Tr[\rho_m B_m]\Phi(Q_m)=\Tr[\rho_m B_m]Q_{m- 1}
\]
and
\[
E_\Ne\circ \Phi(S_m^*(I_m^L\otimes B_m)S_m)=E_\Ne(S_{m-1}^*(I_{m- 1}^L\otimes \Xi_m(B_m))S_{m- 1})=\Tr[\rho_{m- 1} \Xi_m(B_m)]Q_{m- 1},
\]
so that (i) holds.

Finally, let $\Phi=\sum_k V_k^*\cdot V_k$ be any Kraus representation of $\Phi$. Then we have
\[
\Phi(A)=\sum_{m,n=0}^{d-1}\Phi(Q_mAQ_n)=\sum_{m,n=0}^{d-1} Q_{m- 1}\Phi(Q_mAQ_n)Q_{n- 1},
\]
so that we may assume that $V_k=\sum_m V_{k,m}$, with $V_{k,m}=Q_mV_kQ_{m- 1}$, for all $k$ and $m$. Moreover, for each $m$,
 $\sum_k V_{k,m}^*\cdot V_{k,m}$ is a Kraus representation of the restriction $\Phi|_{B(\Ka_m)}$. 

Let $\Xi_m=\sum_l K_{m,l}^*\cdot K_{m,l}$ be a minimal Kraus representation. It follows from (iii) that 
\[
\Phi|_{B(\Ka_m)}=\sum_l S_{m- 1}^*(T_m\otimes K_{m,l}^*)S_m\cdot S_{m}^*(T_m^*\otimes K_{m,l})S_{m- 1}
\]
is another Kraus representation of $\Phi|_{B(\Ka_m)}$, hence there are some $\{\eta_{k,l}^j\}$ such that 
$\sum_i \eta^j_{i,k}\bar{\eta}_{i,l}^j=\delta_{k,l}$ and 
\[
V_{k,m}=\sum_l\eta^j_{k,l} S_{m}^*(T_m^*\otimes K_{m,l})S_{m- 1}= S_{m}^*(T_m^*\otimes L_{m,k})S_{m- 1},
\]
where $L_{m,k}:=\sum_l\eta^m_{k,l} K_{m,l}$, this proves (iv).
\end{proof}

Note that  by identifying 
\[
\Ha=\bigoplus_m \Ka_m\simeq \sum_m \Ka_m\otimes  {\mathbb C} |m\>
\] 
and 
\[
\Ka:=\bigoplus_m \Ka_m^L\otimes \Ka^R_m \simeq \sum_m \Ka^L_m\otimes \Ka_m^R\otimes   {\mathbb C} |m\>,
\]
\eqref{eq:dfs} can be written as
\[
\Ne= S^*(\sum_{m=0}^{d-1} B(\Ka_m^L)\otimes I_m^R\otimes \ketbra{m}{m})S, 
\]
where $S: \Ha\to \Ka$  is a unitary given as $S=\sum_m S_m\otimes \ketbra{m}{m}$. We will also use the notation
\[
\Ka^R:=\bigoplus_m\Ka^R_m\simeq \sum_m\Ka^R_m\otimes  {\mathbb C} |m\> 
\]
and put $I^R:=I_{\Ka^R}$. 
We are now ready to describe the subalgebra $\Fe$. In the following proposition, we keep the notations of Proposition \ref{prop:dfs}.
We can see the next step as an improvement of Lemma \ref{lemma:condexp} applied to the fixed points domain $\Fe$: we can give a more detailed description of $\Fe$ and construct some of the mathematical objects appearing in the lemma using the items introduced in Proposition \ref{prop:dfs}. Going to the predual vision, we can then consider the structure of the invariant states and, finally, we can present the action of the channel on the subsystems associated with the central projections of $\Fe$.

\begin{thm}\label{prop:fixed_multi}
Assume that $\mathcal Z$ is trivial and let the period of $\Phi$ be $d$.
Let us denote 
\[
\tilde T_m:\Ka_0^L\to \Ka_{m}^L, \quad \tilde T_m:=T_{m+1}\dots T_{d-1}T_0, \ m=0,\dots,d- 2; \quad \tilde T_{d-1}:=T_0
\]
and let $T: \Ka_0^L\otimes \Ka^R\to \Ka$ be the unitary 
defined as
\[
T=\sum_{m=0}^{d-1} \tilde T_{m}\otimes I_m^R\otimes \ketbra{m}{m}.
\]
\begin{enumerate}

\item[(i)] The operator $\tilde T_{0}\in  \mathcal U(\Ka_0^L)$ has a discrete spectrum. Let $R_j$ be its minimal spectral projections and let $\mathcal L_j:=R_j\Ka^L_0$, then 
\[
\Fe=S^*T(\bigoplus_j B(\mathcal L_j)\otimes I^R)T^*S;
\]
\item[(ii)] Let $\sigma_j\in \states(\Ka^R)$ be the faithful normal states corresponding to $E_\Fe$ as in Lemma \ref{lemma:condexp} (iii) and (iv). Then 
\[
\sigma_j\equiv \sigma:=\frac 1d\sum_{m=0}^{d-1} \rho_m\otimes \ketbra{m}{m},\qquad \forall j;
\]

 \item[(iii)] Invariant states $\xi\in \mathfrak S(\Ha)$ for $\Phi$ are precisely those of the form
\[
\xi=  S^*T\left(\omega\otimes \sigma \right)T^*S,
\]
where $\omega=\sum_{j}\lambda_j\omega_j\otimes \ketbra{j}{j}$ for some probabilities $\{\lambda_j\}$ and states $\omega_j\in \mathfrak S(\mathcal L_j)$.

\item[(iv)] Let $P_j:=S^*T(R_j\otimes I^R)T^*S$ be the minimal central projections in $\Fe$. The restrictions $\Phi|_{B(P_j\Ha)}$
 have the form
 \[
\Phi|_{B(P_j\Ha)}(S^*T(A_j\otimes B)T^*S)= S^*T(A_j\otimes \Psi_j(B))T^*S,\quad A_j\in B(\mathcal L_j), B\in B(\Ka^R),
 \]
 where $\Psi_j$ are irreducible quantum channels on $B(\Ka^R)$. Moreover, all $\Psi_j$ coincide on block-diagonal elements of the form 
  $\sum_m B_{mm}\otimes \ketbra{m}{m}$  and we have
 \[
\Psi_j(\sum_m B_{mm}\otimes \ketbra{m}{m})=\sum_m \Xi_m(B_{mm})\otimes \ketbra{m- 1}{m-1}.
 \]
 In particular, for all $j$, $\Psi_j$ has period $d$,  $\Ne(\Psi_j)=span\{I_m^R\otimes \ketbra{m}{m}, m=0,\dots, d-1\}$ and $\sigma$
 of (ii) is the unique invariant state.
\end{enumerate}
\end{thm}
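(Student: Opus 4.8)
The plan is to transport the entire problem through the unitary $W:=S^*T:\Ka_0^L\otimes\Ka^R\to\Ha$ and analyze the conjugated channel $\hat\Phi(Y):=W^*\Phi(WYW^*)W$ on $B(\Ka_0^L\otimes\Ka^R)$. Using Proposition \ref{prop:dfs}(iii) together with the bookkeeping identities $T_m\tilde T_m=\tilde T_{m-1}$ for $m\neq 0$ and $T_0\tilde T_0=\tilde T_{d-1}\tilde T_0$ (immediate from the definition of $\tilde T_m$), a direct computation gives the transparent form
\[
\hat\Phi(A_0\otimes B_{mn}\otimes\ketbra{m}{n})=\tilde T_0^{\,[m=0]}A_0\,\tilde T_0^{*\,[n=0]}\otimes\Xi_{m,n}(B_{mn})\otimes\ketbra{m-1}{n-1},
\]
where $A_0\in B(\Ka_0^L)$, the bracketed exponents indicate that the factor $\tilde T_0$ (resp. $\tilde T_0^*$) is inserted only when the ket-block index $m$ (resp. the bra-block index $n$) wraps around from $0$ to $d-1$, and $\Xi_{m,n}$ is the off-diagonal version of $\Xi_m$ built from its Kraus operators (so $\Xi_{m,m}=\Xi_m$). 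In these coordinates $\Ne$ becomes $B(\Ka_0^L)\otimes\mathcal D$ with $\mathcal D=\spn\{I_m^R\otimes\ketbra{m}{m}\}$, so $\hat\Phi$ acts on the left factor as a cyclic shift twisted by the monodromy $\tilde T_0$ and on the right factor by the $\Xi_m$; this single picture drives all four items.

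For (i) I use that $\Fe\subseteq\Ne$, so it suffices to solve $\hat\Phi(X)=X$ inside $B(\Ka_0^L)\otimes\mathcal D$. Writing $X=\sum_m A_m\otimes I_m^R\otimes\ketbra{m}{m}$, the fixed-point equations read $A_{m-1}=T_mA_mT_m^*$, whose solutions are parametrized by $A_0$ through $A_m=(T_1\cdots T_m)^*A_0(T_1\cdots T_m)$, subject to the single cyclic consistency relation $(T_1\cdots T_{d-1})T_0=\tilde T_0$, i.e. $[A_0,\tilde T_0]=0$ (under which $A_m=\tilde T_mA_0\tilde T_m^*$). Thus $\Fe\cong\{\tilde T_0\}'$. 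The main obstacle is then purely spectral and is where infinite dimensionality enters: I must show $\tilde T_0$ has pure point spectrum. For this I invoke that $\Fe$ is the range of the normal conditional expectation $E_\Fe$ and hence atomic; transporting atomicity to $\{\tilde T_0\}'$ and noting that its center equals $\{\tilde T_0\}''$ (the abelian algebra generated by the single unitary $\tilde T_0$), atomicity of this center forces the spectral measure of $\tilde T_0$ to be purely atomic. Writing $\tilde T_0=\sum_j\lambda_j R_j$ with eigenprojections $R_j$ and $\mathcal L_j=R_j\Ka_0^L$ gives $\{\tilde T_0\}'=\bigoplus_j B(\mathcal L_j)$ and, after transporting back through $W$, exactly the claimed description of $\Fe$.

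For (ii) and (iii) I compute the $\sigma_j$ from the Ces\`aro representation $E_\Fe=\lim_n\frac1n\sum_{k=0}^{n-1}\Phi^k$. Applying $\hat\Phi^k$ to $R_j\otimes B_m\otimes\ketbra{m}{m}$, the left slot stays frozen at $R_j$ (because $R_j$ commutes with the monodromy $\tilde T_0$), so $\sigma_j$ is governed entirely by the $j$-independent right dynamics; this already yields $\sigma_j\equiv\sigma$. To identify $\sigma$, the $d$-step right dynamics on block $m$ is $\tilde\Xi_m=\Xi_{m-(d-1)}\circ\cdots\circ\Xi_m$, irreducible and aperiodic by Proposition \ref{prop:dfs}(ii), with unique invariant state $\rho_m$ (from $(\Xi_m)_*\rho_{m-1}=\rho_m$); averaging the convergent powers over the $d$ residues produces $\tfrac1d\Tr[\rho_m B_m]I^R$, matching $\Tr[\sigma B]I^R$ with $\sigma=\tfrac1d\sum_m\rho_m\otimes\ketbra{m}{m}$. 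For (iii) I first note that the $\Phi$-invariant states are exactly the $E_\Fe$-fixed states (one inclusion by the ergodic theorem, the other from $\Phi_*E_{\Fe*}=E_{\Fe*}$); applying Lemma \ref{lemma:condexp}(iv) to $E_\Fe$ and using $\sigma_j\equiv\sigma$ lets the right factor decouple, giving precisely $\xi=S^*T(\omega\otimes\sigma)T^*S$.

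Finally, for (iv) I restrict $\hat\Phi$ to the central block $B(\mathcal L_j\otimes\Ka^R)$. The crucial simplification is that $\tilde T_0$ acts on $\mathcal L_j$ as the scalar $\lambda_j$ with $|\lambda_j|=1$, so the monodromy twist becomes a phase $\lambda_j^{[m=0]}\bar\lambda_j^{[n=0]}$; absorbing it into the right factor yields the factorization $\Phi|_{B(P_j\Ha)}=\mathrm{id}_{B(\mathcal L_j)}\otimes\Psi_j$, and on block-diagonals ($m=n$) the phase cancels, so all $\Psi_j$ agree there and act by $B_{mm}\otimes\ketbra{m}{m}\mapsto\Xi_m(B_{mm})\otimes\ketbra{m-1}{m-1}$. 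The remaining assertions follow by factoring the corresponding invariants of $\Phi|_{B(P_j\Ha)}$ across the tensor product: from $\Fe(\Phi|_{B(P_j\Ha)})=B(\mathcal L_j)\otimes I^R$ and $\Ne(\Phi|_{B(P_j\Ha)})=B(\mathcal L_j)\otimes\mathcal D$ one reads off $\Fe(\Psi_j)=\mathbb C I^R$ (irreducibility) and $\Ne(\Psi_j)=\mathcal D=\spn\{I_m^R\otimes\ketbra{m}{m}\}$ (hence period $d$), while invariance and uniqueness of $\sigma$ for $\Psi_j$ come from (ii) together with irreducibility.
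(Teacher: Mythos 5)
Your proposal is correct and its overall architecture coincides with the paper's: for (i) both arguments reduce the fixed-point equation inside $\Ne$ to the single consistency condition $[A_0,\tilde T_0]=0$, identify $\Fe\cong\{\tilde T_0\}'$, and then use atomicity of $\Fe$ (as the range of the normal conditional expectation $E_\Fe$) to force the abelian algebra $\{\tilde T_0\}''$ to be discrete; (iii) is read off from Lemma \ref{lemma:condexp}(iv) in both; and (iv) rests on the same structural identifications $\Fe(\Psi_j)=\mathbb C I^R$ and $\Ne(\Psi_j)=\spn\{I^R_m\otimes\ketbra{m}{m}\}$. The genuine divergence is in (ii): the paper first extracts the normalization $\Tr\,[\sigma_j(I_m\otimes\ketbra{m}{m})]=1/d$ from invariance and then evaluates $E_\Fe=E_\Fe\circ E_\Ne$ using Lemma \ref{lemma:condexp}(iii) for $E_\Ne$, whereas you compute $E_\Fe$ directly from the Ces\`aro formula \eqref{eq:limit} via the ergodic behaviour of $\tilde\Xi_m$ supplied by Proposition \ref{prop:dfs}(ii). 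Your route is more dynamical and makes the factor $1/d$ transparent, but you should phrase that step as a Ces\`aro limit rather than ``averaging the convergent powers'': mean ergodicity of $\tilde\Xi_m$ (which needs only irreducibility plus the faithful invariant state $\rho_m$) already suffices, while weak*-convergence of the powers themselves requires in addition the aperiodicity and the reversible/stable splitting, which is more machinery than the step demands. A second, smaller difference is in (iv): the paper obtains the factorization $\Phi|_{B(P_j\Ha)}=\mathrm{id}\otimes\Psi_j$ abstractly from the multiplicativity of $\Phi$ on $\Ne$ (namely $\Phi(I\otimes B)$ must commute with the fixed points $B(\mathcal L_j)\otimes I^R$), whereas you derive it from the explicit off-diagonal extension of Proposition \ref{prop:dfs}(iii), with the monodromy $\tilde T_0$ degenerating to the phase $\lambda_j$ on $\mathcal L_j$; this is a legitimate alternative, but note that the off-diagonal formula is not literally contained in Proposition \ref{prop:dfs}(iii) and must be justified from the Kraus form in Proposition \ref{prop:dfs}(iv), as you indicate.
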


\begin{proof} Since $\mathcal F\subseteq \Ne$, we may apply Proposition \ref{prop:dfs}. It can be easily checked that 
an element of $\Ne$ is in $\Fe$ if and only if it is of the form
\[
S^*T(A\otimes I^R)T^*S
\]
with $A\in\mathcal A:=\{\tilde T_{0}\}'\cap B(\Ha_0^L)$. Note that the commutant 
$\mathcal A':=\{\tilde T_{0}\}''\cap B(\Ha_0^L)=\mathcal Z(\mathcal A)$ is abelian. Further, we have
$\mathcal F\simeq \mathcal A$ and since $\mathcal F$ is 
 atomic, $\mathcal A$ must be such as well, so that $\{\tilde T_{0}\}''\cap B(\Ha_0^L)$ must be discrete. This proves (i).

By Lemma 
\ref{lemma:condexp}, there are some states $\sigma_j\in \states(\Ka^R)$ such that
\begin{equation}\label{eq:sigma}
E_\Fe(S^*T(R_j\otimes B)T^*S)=\Tr [\sigma_j B]P_j,
\end{equation}
where $B\in B(\Ka^R)$ and $P_j:=S^*T(R_j\otimes I^R)T^*S$ are the minimal central projections in $\Fe$. Moreover, 
since $E_\Fe$ is given by \eqref{eq:limit} and satisfies \eqref{eq:condexp_min}, we see that a state  $\xi$ is invariant for $\Phi$ if and only if it is invariant for $E_\Fe$. Consequently, by Lemma \ref{lemma:condexp} (iv),
any state of the form 
$\psi=T^*S(\omega_j\otimes \sigma_j)S^*T$ with $\omega_j\in \states(\mathcal L_j)$ is an invariant state for $\Phi$. It follows that
for any $m=0,\dots d-1$,
\begin{align*}
\Tr [\sigma_j(I_m\otimes\ketbra{m}{m})]&=\Tr [\psi S^*T(R_j\otimes I_m\otimes\ketbra{m}{m})T^*S]\\
&=
\Tr [\Phi_*(\psi)S^*T(R_j\otimes I_m\otimes\ketbra{m}{m})T^*S]\\
&=
\Tr [\psi\Phi(S^*T(R_j\otimes I_m\otimes\ketbra{m}{m})T^*S)]\\
&=\Tr[\psi S^*T(R_j\otimes I_{m- 1}\otimes\ketbra{m- 1}{m- 1})T^*S]\\
&=\Tr [\sigma_j(I_{m- 1}\otimes\ketbra{m- 1}{m- 1})]
\end{align*}
so that $\Tr [\sigma_j(I_m\otimes\ketbra{m}{m})]=1/d$. Let now $B=\sum_{m,n} B_{mn}\otimes \ketbra{m}{n}\in B(\Ka^R)$.
Since $E_\Ne\in \mathbf S$, we obtain from \eqref{eq:condexp_min} that also
$E_\Fe\circ E_\Ne=E_\Ne\circ E_\Fe=E_\Fe$. Using Lemma \ref{lemma:condexp} (ii) for $E_\Ne$, we get 
 \begin{align*}
E_\Fe(S^*T(R_j\otimes B)T^*S)&= \sum_m E_\Fe\circ E_\Ne(Q_mS^*T(R_j\otimes B)T^*SQ_m)\\
&=
\sum_m E_\Fe\circ E_\Ne(S^*_m(\tilde T_{m}R_j\tilde T_{m}^*\otimes B_{mm})S_m)
\\
\mbox{\tiny (by Lemma \ref{lemma:condexp} (iii)) }&= \sum_m \Tr[ \rho_m B_{mm}]E_\Fe(S^*_m(\tilde T_{m}R_j\tilde T_{m}^*\otimes I_m^R)S_m)\\
&=
\sum_m \Tr[\rho_m B_{mm}]E_\Fe(S^*T(R_j\otimes I_m^R\otimes \ketbra{m}{m})T^*S)=\frac1d\sum_m\Tr [\rho_m B_{mm}] P_j,
 \end{align*}
 where the last equality follows from \eqref{eq:sigma} and the previously obtained equality $\Tr[\sigma_j I_m\otimes\ketbra{m}{m}]=1/d$. 
 This and \eqref{eq:sigma} prove (ii). 
 
 Point (iii) now directly follows from Lemma \ref{lemma:condexp} (iv).

Finally, we prove (iv). We see by the multiplicativity properties of $\Phi$ on $\Ne$ that $\Phi(B(P_j\Ha))\subseteq B(P_j\Ha)$ and that the restrictions have the given form with some quantum channel $\Psi_j$ on $B(\Ka^R)$. Since any fixed point of $\Psi_j$ is related to a fixed point of $\Phi$, we can see that it must be trivial, so that $\Psi_j$ are irreducible. For any $B_{m}\in B(\Ka_m^R)$, we have
 by Proposition \ref{prop:dfs}, 
\begin{align*}
\Phi(S^*T(R_j\otimes B_m\otimes \ketbra{m}{m})T^*S&=\Phi(S_m^*(\tilde T_{m-1}R_j\tilde T_{m-1}^*\otimes B_m)S_m)\\
&=S_{m- 1}^*(\tilde T_{m}R_j\tilde T_{m}^*\otimes \Xi_m(B_m))S_{m- 1})\\
&=S^*T(R_j\otimes \Xi_m(B_m)\otimes \ketbra{m-1}
{m- 1})T^*S.
\end{align*}
It follows that $\Psi_j (B_m\otimes \ketbra{m}{m})=\Xi_m(B_m)\otimes \ketbra{m-1}{m- 1}$ and 
$I_m^R\otimes \ketbra{m}{m}\in \Ne(\Psi_j)$ for all $m$ and $j$. Hence any minimal projection in $\Ne(\Psi_j)$
 must be of the form $Q\otimes \ketbra{m}{m}$ for some $m=0,\dots,d-1$ and some projection $Q\in B(\Ka^R_m)$. But then it easily follows that $I_m\otimes Q$   is in $\Ne$, so that we must have $Q=I_m^R$. 
Finally, the fact that $\sigma$ is an ivariant state for $\Psi_j$ follows easily from (iii).
\end{proof}

{\bf Conclusions.} 
We are aware that the contents of this subsection are technical and the relations between different representations and decompositions are intricate, in particular for  a reader who is not involved in similar research topics.
For a full comprehension, it can be useful to insert it in the surrounding literature.
As we already mentioned in the Introduction, the results of this section include sometimes a revision and improvements or generalizations of different previous studies.
The structure of the fixed points domain has already been investigated and one can find various papers in last two decades, see for instance  \cite{AGG,BN,BJKW,capa2015,JencovaPetz} and references therein.
For the structure of the DFA, there is some interest growing from different fields and we could improve its description in Theorem \ref{prop:fixed_multi}. We can underline that here we study a dual version in infinite dimension of 
the decomposition appearing in \cite[Theorem 8]{WPG} and \cite{WolfLN};
further, this section includes a generalization, in discrete time version (which has a richer structure, due to period) of \cite{DFSU}, without the need of imposing atomicity condition.


\section{Application to open quantum walks}\label{section:OQRWs}

In this section we discuss an important type of quantum channels.

Let $\Ha= \oplus_{i\in V} \ha_i$, where $V$ is a countable set of vertices and $\ha_i$ are separable Hilbert spaces. Note that we may express $\Ha$ as $\Ha=\sum_{i\in V} \ha_i\otimes |i\>$. 
An open quantum random walk (OQRW) (\cite{Attal}) is a completely positive trace preserving map $\mathfrak M$ on the space $B(\Ha)_*$ of trace-class operators, of the form  
\[
\mathfrak M: \rho \mapsto \sum_{i,j} V_{i,j}\rho V_{i,j}^*,
\]
where $V_{i,j}=L_{i,j}\otimes |i\>\<j|$ and $L_{i,j}$ are bounded operators $\ha_j\to \ha_i$ satisfying
\begin{equation}\label{eq:tracepreserving}
\sum_{i\in V} L_{i,j}^*L_{i,j}= I_j, \qquad \forall j\in V.
\end{equation}

Put $\Phi=\mathfrak M^*$, then $\Phi$ is a quantum channel. Note that any operator $A\in B(\Ha)$ can be written as
\[
A=\sum_{i,j\in V} A_{i,j}\otimes |i\>\<j|,
\]
where $A_{i,j}$ is a bounded operator $\ha_j\to \ha_i$, and the action of $\Phi$ has the form
\[
\Phi(A)=\sum_{j} \sum_iL_{i,j}^*A_{i,i}L_{i,j}\otimes \ketbra{j}{j}.
\]
This family of quantum channels has recently become quite popular and have been extensively studied (see \cite{BBP,capa2015,DM,GFY,LS,SP2019}).
Here we want to investigate the structure of the DFA associated with an OQRW: we obtain some results in the general case and then expound some particular remarkable classes. 
Finally we go exploring a non positive recurrent family of models considering homogeneous OQRWs on the group $V={\mathbb Z}$.

We next characterize the multiplicative domain $\Me$ and the decoherence-free subalgebra $\Ne$ of $\Phi$ by the transition operators $L_{i,j}$.

To obtain $\Ne$, we invoke the notation of \cite{capa2015}. For $i,j\in V$, let  $\mathcal P_n(i,j)$ be the set of all paths 
\[
\pi=(i_0=i,i_1,\dots,i_n=j)
\]
from $i$ to $j$ of length $n$. For each $\pi \in  \mathcal P_n(i,j)$, we define the operator $L_\pi:\ha_i\to \ha_j$ as
\[
L_\pi=L_{j,i_{n-1}}L_{i_{n-1},i_{n-2}}\dots L_{i_1,i}.
\]

\begin{prop}\label{prop:oqrw-m&n} Let $\Phi$ be an OQRW. 
\\
1. $A\in \Me$ if and only if 
\begin{equation}\label{eq:oqrwmdiag}
A_{i,i}L_{i,j}L_{k,j}^*= L_{i,j}L_{k,j}^*A_{k,k}, \quad \forall i,j,k
\end{equation}
\begin{equation}\label{eq:oqrwmoff}
\mbox{and } \qquad A_{l,i}L_{i,j}=0= L^*_{l,j}A_{l,i}, \quad \forall i,j,l\in V,\ i\ne l
\end{equation}
2.
$A\in \Ne$  if and only if for all $i,j,k,l\in V$, $l\ne i$ and $n\in \mathbb N$,
\begin{equation}\label{eq:oqrmndiag}
A_{i,i}L_\pi L_{\pi'}^*= L_\pi L_{\pi'}^*A_{k,k}, \quad \forall \pi\in\mathcal P_n(j,i), \ \pi' \in  \mathcal P_n(j,k)
\end{equation}
\begin{equation}\label{eq:oqrmnoff}
\mbox{and } \qquad A_{l,i}L_{i,j}=0=L_{l,j}^*A_{l,i}.
\end{equation}

\end{prop}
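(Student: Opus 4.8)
The plan is to treat both parts by the same mechanism: each of $\Me$ and $\Ne$ is a commutant, so the whole proof reduces to expanding a commutation relation in the matrix-unit basis $A=\sum_{p,q}A_{p,q}\otimes|p\rangle\langle q|$ and matching coefficients. For part 1 I would start from $\Me=\{V_{i,j}V_{k,l}^*\}'$ (Proposition \ref{prop:multiplicative}) with $V_{i,j}=L_{i,j}\otimes|i\rangle\langle j|$. A direct computation collapses the matrix units to give $V_{i,j}V_{k,l}^*=\delta_{j,l}\,L_{i,j}L_{k,j}^*\otimes|i\rangle\langle k|$, so $A\in\Me$ iff $A$ commutes with every $L_{i,j}L_{k,j}^*\otimes|i\rangle\langle k|$. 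Comparing coefficients of each matrix unit $|p\rangle\langle q|$ in $[A,\,L_{i,j}L_{k,j}^*\otimes|i\rangle\langle k|]=0$, the diagonal choice $p=i,\ q=k$ yields exactly \eqref{eq:oqrwmdiag}, while the two off-diagonal choices yield $A_{p,i}L_{i,j}L_{k,j}^*=0$ for $p\ne i$ and $L_{i,j}L_{k,j}^*A_{k,q}=0$ for $q\ne k$.

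The only non-bookkeeping step is sharpening these off-diagonal relations to \eqref{eq:oqrwmoff}. Here I would use the completeness relation \eqref{eq:tracepreserving}, $\sum_i L_{i,j}^*L_{i,j}=I_j$, through a positivity argument: setting $X=A_{p,i}L_{i,j}$, the vanishing of $XL_{k,j}^*$ for every $k$ forces $XX^*=\sum_k XL_{k,j}^*L_{k,j}X^*=0$, hence $A_{p,i}L_{i,j}=0$; the symmetric argument on $Y=L_{l,j}^*A_{l,i}$ gives $L_{l,j}^*A_{l,i}=0$. Conversely \eqref{eq:oqrwmoff} reinstates both off-diagonal relations by restoring the dropped factor, and \eqref{eq:oqrwmdiag} is the diagonal one, closing the equivalence.

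For part 2 I would invoke Proposition \ref{prop:dfageneral}(i), so that $\Ne$ is the commutant of the $n$-fold products $V_{i_1}\cdots V_{i_n}V_{j_1}^*\cdots V_{j_n}^*$ over all $n$. The preliminary observation is that such products telescope: the matrix units chain as $|a_0\rangle\langle a_1|\cdots|a_{n-1}\rangle\langle a_n|=|a_0\rangle\langle a_n|$ and the operator part assembles into the path operator $L_\pi$, so the nonzero products are $L_\pi\otimes|i\rangle\langle j|$ with $\pi\in\mathcal P_n(j,i)$. Consequently the generators whose commutant is $\Ne$ are $L_\pi L_{\pi'}^*\otimes|i\rangle\langle k|$ for $\pi\in\mathcal P_n(j,i)$ and $\pi'\in\mathcal P_n(j,k)$ (the shared starting vertex $j$ again forced by the matrix-unit product). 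The coefficient-matching is then verbatim that of part 1 with $L_{i,j}L_{k,j}^*$ replaced by $L_\pi L_{\pi'}^*$, giving \eqref{eq:oqrmndiag} on the diagonal and path-off-diagonal relations off it.

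It then remains to show these path-off-diagonal relations are equivalent to the single-step condition \eqref{eq:oqrmnoff}. For the forward direction the same positivity argument, now with the completeness relation $\sum_i\sum_{\pi\in\mathcal P_n(j,i)}L_\pi^*L_\pi=I_j$ (valid because $\Phi^n$ is unital), yields $A_{p,i}L_\pi=0$ and $L_{\pi'}^*A_{k,q}=0$ for all paths, and specializing to length-one paths gives \eqref{eq:oqrmnoff}. For the converse, given \eqref{eq:oqrmnoff} I would peel off the extreme single-step factor of each path operator: writing $L_\pi=L_{i,i_{n-1}}(\cdots)$ with $A_{p,i}L_{i,i_{n-1}}=0$ for $p\ne i$ kills the whole product, and symmetrically for the rightmost factor of $L_{\pi'}^*$, recovering the path relations for every $n$. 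The main obstacle I anticipate is purely organizational, namely keeping the index conventions of $L_\pi$ and the telescoping consistent so that the factor one peels off really is the correct single-step operator; the genuine analytic content is confined to the one short completeness-plus-positivity lemma, which is identical in both parts.
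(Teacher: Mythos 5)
Your proposal is correct and follows essentially the same route as the paper: both reduce the claim, via Proposition \ref{prop:multiplicative} and Proposition \ref{prop:dfageneral}, to commutation with the operators $L_{i,j}L_{k,j}^*\otimes\ketbra{i}{k}$ (resp.\ $L_\pi L_{\pi'}^*\otimes\ketbra{i}{k}$), match matrix-unit coefficients, and then sharpen the off-diagonal relations to \eqref{eq:oqrwmoff} by summing against the completeness relation \eqref{eq:tracepreserving}. Your positivity phrasing of that last step and your explicit peeling argument reducing the length-$n$ off-diagonal conditions to \eqref{eq:oqrmnoff} are just slightly more detailed renderings of what the paper does.
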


\begin{proof}
1. It is easy to see from Proposition \ref{prop:multiplicative} that $A\in \Me$ if and only if $A$ commutes with all operators of the form $L_{i,j}L_{k,j}^*\otimes |i\>\<k|$, $i,j,k\in V$. This is equivalent to (\ref{eq:oqrwmdiag}), together with
\begin{equation}\label{eq:oqrwmoffff}
A_{l,i}L_{i,j}L_{k,j}^*=0= L_{k,j}L_{l,j}^*A_{l,i}, \quad \forall i,j,k,l\in V,\ l\ne i
\end{equation}
It is clear that (\ref{eq:oqrwmoff}) implies (\ref{eq:oqrwmoffff}). For the converse, 
 multiply the first equality of (\ref{eq:oqrwmoffff}) by $L_{k,j}$ from the right and sum over $k\in V$, then (\ref{eq:tracepreserving}) implies the first equality of (\ref{eq:oqrwmoff}). The second equality is proved similarly.
 
2.
Since the Kraus operators of $\Phi^n$ are operators of the form $L_\pi\otimes |i\>\<j|$ for $\pi\in \mathcal P_n(j,i)$, 
the second statement can be proved exactly as the previous one. 
\end{proof}

Due to the characterization in the previous proposition, we can deduce a decomposition of the decoherence-free algebra in block diagonal and block off-diagonal operators.

\begin{coro}\label{diag-off-diag}
$\Ne=\Ne_D\oplus \Ne_{OD}$ where:
\begin{eqnarray*}
&&\kern-10pt 
\Ne_D := \{A=\sum_{i\in V} A_{i}\otimes |i\>\<i|, A\in\Ne\} 
\\
&&=\{\sum_{i\in V} A_{i}\otimes |i\>\<i| \,:\, A_{i}L_\pi L_{\pi'}^*= L_\pi L_{\pi'}^*A_{k},  \forall i,k\in V, 
\forall (\pi,\pi')\in \cup_{j,n}(\mathcal P_n(j,i)\times \mathcal P_n(j,k))  \}
\\
&&\kern-10pt
\Ne_{OD} :=  \{A=\sum_{i\neq j\in V} A_{i,j}\otimes |i\>\<j|, A\in\Ne\}
\\
&&=\{\sum_{i\neq j\in V} A_{i,j} \otimes |i\>\<j| \,:\, A_{i,j}L_{j,l}=0=L_{i,l}^*A_{i,j},\,  \forall i\ne j,l\in V \}.
\end{eqnarray*}
\end{coro}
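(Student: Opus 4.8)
The plan is to read off the decomposition directly from the characterization of $\Ne$ obtained in Proposition \ref{prop:oqrw-m&n}(2). The crucial structural observation is that the two families of equations describing membership in $\Ne$ decouple according to the block structure: condition \eqref{eq:oqrmndiag} constrains only the diagonal blocks $A_{i,i}$, while condition \eqref{eq:oqrmnoff} constrains only the genuinely off-diagonal blocks $A_{l,i}$ with $l\neq i$. No single equation mixes a diagonal block with an off-diagonal one. This is exactly what makes the splitting into $\Ne_D$ and $\Ne_{OD}$ compatible with the defining relations.

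Concretely, given $A=\sum_{i,j}A_{i,j}\otimes|i\>\<j|\in\Ne$, I would set $P_i:=I_{\ha_i}\otimes|i\>\<i|$ and define the diagonal part as the pinching
\[
A_D:=\sum_i P_iAP_i=\sum_i A_{i,i}\otimes|i\>\<i|,
\]
putting $A_{OD}:=A-A_D=\sum_{i\neq j}A_{i,j}\otimes|i\>\<j|$. Since $A\mapsto\sum_iP_iAP_i$ is a normal conditional expectation (hence bounded and completely positive), both $A_D$ and $A_{OD}$ are legitimate elements of $B(\Ha)$, which disposes of any convergence worry. It then remains to verify that each summand lies in $\Ne$. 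For $A_D$ the diagonal blocks coincide with those of $A$, so \eqref{eq:oqrmndiag} holds, while \eqref{eq:oqrmnoff} is vacuous because all off-diagonal blocks of $A_D$ vanish; hence $A_D\in\Ne$ by Proposition \ref{prop:oqrw-m&n}(2). Symmetrically, $A_{OD}$ has vanishing diagonal blocks, so both sides of \eqref{eq:oqrmndiag} are zero and the condition is trivially satisfied, whereas \eqref{eq:oqrmnoff} holds because the off-diagonal blocks agree with those of $A$; thus $A_{OD}\in\Ne$.

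Since $A=A_D+A_{OD}$ and a nonzero operator cannot be simultaneously block-diagonal and block-off-diagonal, this yields the internal direct sum $\Ne=\Ne_D\oplus\Ne_{OD}$. The two explicit descriptions are then obtained by specialising Proposition \ref{prop:oqrw-m&n}(2) to the two classes of operators: for $A\in\Ne_D$ only \eqref{eq:oqrmndiag} survives and reproduces the stated commutation relation over all paths $(\pi,\pi')\in\mathcal P_n(j,i)\times\mathcal P_n(j,k)$, while for $A\in\Ne_{OD}$ only \eqref{eq:oqrmnoff} survives and gives $A_{i,j}L_{j,l}=0=L_{i,l}^*A_{i,j}$ after relabelling the dummy indices. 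There is essentially no analytic difficulty here; the only points requiring care are checking that the pinching map keeps us inside $B(\Ha)$ and matching the index conventions of \eqref{eq:oqrmnoff} with those in the statement of $\Ne_{OD}$, which is pure bookkeeping.
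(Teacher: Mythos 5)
Your proof is correct and follows exactly the route the paper intends: the corollary is stated there without an explicit argument, as an immediate consequence of Proposition \ref{prop:oqrw-m&n}(2), and your observation that conditions \eqref{eq:oqrmndiag} and \eqref{eq:oqrmnoff} decouple into purely diagonal and purely off-diagonal constraints, so that the pinching $A\mapsto\sum_i P_iAP_i$ maps $\Ne$ into itself, is precisely the intended justification. The index relabelling matching \eqref{eq:oqrmnoff} to the displayed form of $\Ne_{OD}$ is also handled correctly.
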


Assume that  $\Ne_{OD}$ is non-trivial, so that there is some $0\ne A\in \Ne_{OD}$. Since $\Phi(A)=0$,   $A\in \Ne\cap \ker \Phi$  and clearly also  $A^*A\in \Ne\cap \ker \Phi$.  The  block-diagonal part
$(A^*A)_D:=\sum_{i\in V} (A^*A)_{i,i}\otimes |i\>\<i|$ is  a nonzero positive operator in 
$$\Ne\cap \ker \Phi\cap \{\mbox{block diagonal operators}\}=\Ne_D\cap \ker \Phi.$$
Summing up, we deduce
$$
\Ne_{OD}\neq \{0\}
\quad\Rightarrow \quad
\Ne\cap \ker \Phi\neq\{0\}
\quad \Leftrightarrow \quad
\Ne_D\cap \ker \Phi\neq\{0\}.
$$

\begin{coro}\label{coro:diag} If $\Phi$ admits a faithful normal invariant state, then $\Ne=\Ne_D$.
In the general case, put $\mathcal W_i:=\cap_{j\in V} \mathrm{Range} (L_{i,j})^\perp$, then $\Ne=\Ne_D$ if and only if there is at most one index  $i\in V$, such that $\mathcal W_i\ne \{0\}$.
\end{coro}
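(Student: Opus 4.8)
The plan is to observe that both assertions concern only the off-diagonal part $\Ne_{OD}$: by Corollary \ref{diag-off-diag} we have $\Ne=\Ne_D\oplus\Ne_{OD}$, so the equality $\Ne=\Ne_D$ is equivalent to $\Ne_{OD}=\{0\}$. Thus everything reduces to deciding when $\Ne_{OD}$ is trivial.

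For the first claim I would argue purely from the general theory already set up. If $\Phi$ has a faithful normal invariant state, then by Proposition \ref{prop:faithful_invariant}(ii) the restriction $\Phi|_\Ne$ is a $*$-automorphism, in particular injective, so that $\Ne\cap\ker\Phi=\{0\}$. Combining this with the implication $\Ne_{OD}\neq\{0\}\Rightarrow\Ne\cap\ker\Phi\neq\{0\}$ recorded just before the corollary forces $\Ne_{OD}=\{0\}$, i.e.\ $\Ne=\Ne_D$. (Alternatively one checks directly that a faithful invariant density makes every $\mathcal W_i$ trivial, which also yields the conclusion via the second claim: from the form of $\mathfrak M$ the invariance relation reads $\rho_{i,i}=\sum_j L_{i,j}\rho_{j,j}L_{i,j}^*$, and pairing with $v\in\mathcal W_i$ gives $\langle v,\rho_{i,i}v\rangle=\sum_j\langle L_{i,j}^*v,\rho_{j,j}L_{i,j}^*v\rangle=0$, whence $v=0$ by faithfulness of $\rho_{i,i}$.)

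For the general case the key step is to translate the two defining conditions of $\Ne_{OD}$ from Corollary \ref{diag-off-diag} into a geometric statement about the spaces $\mathcal W_i$. Recalling that $\mathrm{Range}(L_{i,j})^\perp=\ker(L_{i,j}^*)$, we have $\mathcal W_i=\cap_j\ker(L_{i,j}^*)$. For an off-diagonal block $A_{i,j}$ with $i\neq j$, the condition $A_{i,j}L_{j,l}=0$ for all $l$ says exactly that $A_{i,j}$ annihilates $\overline{\sum_l\mathrm{Range}(L_{j,l})}$, i.e.\ that $A_{i,j}$ is supported on $\mathcal W_j$; the condition $L_{i,l}^*A_{i,j}=0$ for all $l$ says exactly that $\mathrm{Range}(A_{i,j})\subseteq\cap_l\ker(L_{i,l}^*)=\mathcal W_i$. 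Hence the admissible off-diagonal blocks are precisely the arbitrary bounded operators carrying $\mathcal W_j$ into $\mathcal W_i$, with no further constraint.

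Consequently $\Ne_{OD}\neq\{0\}$ if and only if some such block is nonzero, which is possible exactly when there exist two distinct indices $i\neq j$ with $\mathcal W_i\neq\{0\}$ and $\mathcal W_j\neq\{0\}$: given such indices one builds a nonzero element by choosing the rank-one block $A_{i,j}=\ketbra{v}{w}$ with $0\neq v\in\mathcal W_i$, $0\neq w\in\mathcal W_j$ and all other blocks zero, while conversely, if at most one $\mathcal W_i$ is nonzero, every admissible block vanishes. Negating, $\Ne=\Ne_D$ holds iff at most one index $i$ has $\mathcal W_i\neq\{0\}$. The only real work is the translation in the previous paragraph; once $\mathcal W_i$ is identified with $\cap_j\ker(L_{i,j}^*)$ it is immediate, so I do not expect a genuine obstacle here.
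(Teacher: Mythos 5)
Your proof is correct and follows essentially the same route as the paper: both parts rest on the decomposition $\Ne=\Ne_D\oplus\Ne_{OD}$ of Corollary \ref{diag-off-diag} together with the observation, recorded just before the corollary, that a nonzero $\Ne_{OD}$ forces $\Ne\cap\ker\Phi\neq\{0\}$. The only (immaterial) differences are that the paper derives the contradiction in the first part from faithfulness of $\Phi$ (no nontrivial projection in $\ker\Phi$) where you invoke injectivity of $\Phi|_{\Ne}$, and that you spell out the translation of the off-diagonal conditions into the statement about the spaces $\mathcal W_i$, which the paper leaves as ``clear''.
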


\begin{proof} As shown above, if $\Ne_{OD}\ne \{0\}$, then also $\Ne_D\cap \ker \Phi\neq\{0\}$ and since this is a von Neumann algebra, $\Ne_D\cap \ker \Phi$ must contain a   non trivial projection. This is clearly not possible if $\Phi$ admits a faithful normal invariant state, since then $\Phi$ is faithful and there can be no projections in $\ker\Phi$.
The general case is clear from Corollary \ref{diag-off-diag}.

\end{proof}

\subsection {Homogeneous OQRWs}

An OQRW is called homogeneous if $V$ is an abelian  group, $\ha_i=\ha$ does not depend on $i$ and transition operators are translation invariant, i.e. 
$L_{i,j}=L_{i+n,j+n}=:L_{j-i}$ for any $i,j,n\in V$.  We can define the local operator $\mathcal L$, acting on $B(\mathfrak h)_*$ as 
$$
\mathcal L(\rho) = \sum_{k\in V} L_k \rho L_k^*.
$$
Let $\rhoinv\in \states(\ha)$ be an invariant state for $\mathcal L$. If $V$ is finite, then
\[
\frac 1{|V|} \sum_i \rhoinv\otimes \ketbra{i}{i}
\]
is a normal invariant state for $\Phi$, which is faithful iff $\rhoinv$ is. If $V$ is infinite, we can only obtain an invariant weight in this way, given as 
$$
\omega(\sum x_{ij}\otimes |i\>\<j|) := 
\sum_j \Tr [\rhoinv x_{jj}],
$$
for all positive $x=\sum x_{ij}\otimes |i\>\<j|$ in $B(\Ha)$.
In particular, if $\Phi$ is irreducible, the invariant states must be translation invariant and hence there are no invariant states if $V$ is infinite, \cite[Prop. 9.3.]{capa2015}. 

We will consider the nearest neighbor case with $V=\mathbb Z$  (or $V=\mathbb Z_d$) and $L_{i-1,i}=L_-$, $L_{i+1,i}=L_+$, all the other $L_{i,j}=0$.
An immediate application of Proposition \ref{prop:oqrw-m&n}  will give us the following.

\begin{coro}\label{coro-OQRWs-M} Let $\Phi$ be a homogeneous nearest neighbor OQRW on  $\mathbb Z$ (or $\mathbb Z_d$). Then $A\in \Me$ if and only if 
\begin{eqnarray*}
A_{i,i}L_{+}L_{-}^*= L_{+}L_{-}^*A_{i-2,i-2}, 
\quad
A_{i-2,i-2}L_{-}L_{+}^*= L_{-}L_{+}^*A_{i,i},
\quad
A_{i,i}\in\{|L_{+}^*|, |L_{-}^*| \}'
\quad \forall i,
\end{eqnarray*}
and 
\begin{equation}\label{eq:homoqrwmoff}
A_{ik}L_{-}=A_{ik}L_{+}=L_{-}^*A_{ik}=L_{+}^*A_{ik}=0, \quad \forall i,k\in V,\ i\ne k
\end{equation}
In particular, when at least one transition operator is invertible, $\mathcal M$ contains only block-diagonal operators.
\end{coro}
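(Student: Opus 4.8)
The plan is to read the corollary off directly from Proposition~\ref{prop:oqrw-m&n}(1) by substituting the nearest-neighbor structure and tracking which index triples produce nontrivial constraints. In this model the only nonzero transition operators are $L_{i,i-1}=L_+$ and $L_{i,i+1}=L_-$; equivalently, $L_{i,j}\ne 0$ precisely when $j=i\mp 1$. So the whole argument reduces to feeding this data into the two families of conditions \eqref{eq:oqrwmdiag} and \eqref{eq:oqrwmoff} and recording the surviving cases.

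For the diagonal condition \eqref{eq:oqrwmdiag}, namely $A_{i,i}L_{i,j}L_{k,j}^*=L_{i,j}L_{k,j}^*A_{k,k}$, a nontrivial relation arises only when both $L_{i,j}$ and $L_{k,j}$ are nonzero, which forces $i,k\in\{j-1,j+1\}$. First I would split into the four resulting cases: $i=k=j-1$ gives that $A_{j-1,j-1}$ commutes with $L_-L_-^*$; $i=k=j+1$ gives that $A_{j+1,j+1}$ commutes with $L_+L_+^*$; $i=j+1,\ k=j-1$ gives $A_{m,m}L_+L_-^*=L_+L_-^*A_{m-2,m-2}$ after setting $m=j+1$; and $i=j-1,\ k=j+1$ gives $A_{m-2,m-2}L_-L_+^*=L_-L_+^*A_{m,m}$. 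Since $L_\pm L_\pm^*$ and $|L_\pm^*|$ generate the same abelian von Neumann algebra, the first two cases are equivalent to $A_{i,i}\in\{|L_+^*|,|L_-^*|\}'$ for every $i$, while the last two are exactly the remaining two diagonal relations of the statement (letting $m$ range over all indices).

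For the off-diagonal condition \eqref{eq:oqrwmoff}, namely $A_{l,i}L_{i,j}=0=L_{l,j}^*A_{l,i}$ with $l\ne i$, I would let $j$ run over the (at most two) values making $L_{i,j}$, respectively $L_{l,j}$, nonzero. The first equality then yields $A_{l,i}L_-=A_{l,i}L_+=0$ and the second yields $L_-^*A_{l,i}=L_+^*A_{l,i}=0$, which after relabeling $(l,i)\to(i,k)$ is precisely \eqref{eq:homoqrwmoff}. The ``in particular'' claim is then immediate: if, say, $L_+$ is invertible, then $A_{ik}L_+=0$ forces $A_{ik}=0$ for all $i\ne k$, so every $A\in\Me$ is block-diagonal; the case of $L_-$ invertible is identical.

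The only real difficulty here is bookkeeping rather than analysis: one must correctly identify, for each condition, exactly which values of $j$ activate the relevant transition operators, and translate the products $L_{i,j}L_{k,j}^*$ into the $L_+L_-^*$, $L_-L_+^*$ and $L_\pm L_\pm^*$ forms. All the substantive content is already carried by Proposition~\ref{prop:oqrw-m&n}, so once the indices are tracked carefully there is nothing further to prove.
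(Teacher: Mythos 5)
Your proposal is correct and follows exactly the route the paper intends: the paper gives no separate proof of this corollary beyond the remark that it is ``an immediate application of Proposition~\ref{prop:oqrw-m&n}'', and your case analysis of which triples $(i,j,k)$ activate nonzero transition operators, together with the observation that $\{L_\pm L_\pm^*\}'=\{|L_\pm^*|\}'$, is precisely that application. The only quibble is the phrasing that $L_+L_+^*$ and $L_-L_-^*$ ``generate the same abelian von Neumann algebra'' as $|L_+^*|,|L_-^*|$ --- the two generators need not commute with each other, so the algebra they jointly generate need not be abelian; what you actually use (and what is true) is that each positive operator has the same commutant as its square root.
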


\subsection{Examples}

We will consider three examples of open quantum random walks and describe their decoherence free algebras. As we will see in the first example, the action of any quantum channel on a cyclic component of $\Ne$  is decribed by an OQRW of a certain  type. The second example is a homogeneous  OQRW with two vertices and finite dimensional local spaces. In the last example, we describe the decoherence-free algebra for a homogeneous OQRW  without a faithful normal invariant state.

\subsubsection{A cyclic shift OQRW}

We consider an OQRW with $V=\mathbb Z_d$ and $\ha_0\simeq\ha_1\simeq\dots\simeq\ha_{d-1}$. 
$L_{i,i- 1}=:U_i$ be a unitary $\ha_{i- 1}\to \ha_i$, $i=0,\dots ,d-1$,  $L_{i,j}=0$ for $i-j\ne 1$
(addition and subtraction on indices are in $\mathbb Z_d$).
We can explicitly write the action of $\Phi$ and its preadjoint as
\[
\Phi(A)=\sum_{i=0}^{d-1}U_i^*A_{i,i}U_i\otimes \ketbra{i- 1}{i- 1},
\qquad  \Phi_*(\rho)=\sum_{i=0}^{d-1}U_i\rho_{i- 1,i- 1}U_i^*\otimes \ketbra{i}{i}
\]
where $A=\sum_{i,j} A_{i,j} \otimes \ketbra{i}{j}$ and $\rho=\sum_{k,l}\rho_{k,l}\otimes|k\>\<l|$. It is clear from this expression that the  fixed points of $\Phi$ 
are precisely the  block diagonal operators such that 
\[
U_i^*A_{i,i}U_i=A_{i- 1,i- 1},\qquad i=0,\dots,d-1.
\]
Putting $\tilde U_i:= U_i\dots U_1:\ha_{0}\to \ha_i$, $i=1,\dots, d-1$, $\tilde U_{0}:=I_{\ha_0}$ and 
$\tilde U:=U_0\tilde U_{d-1} \in B(\ha_0)$, we obtain that  
 $$
\Fe=\{\sum_{i=0}^{d-1} \tilde U_{i}A_0\tilde U_{i}^*\otimes |i\>\<i|, \, A_0\in B(\ha_0) \,s.t.\, [A_0,\tilde U]=0\}.
$$
It follows that $\Fe$ is a von Neumann algebra isomorphic to $\{\tilde U\}'\cap B(\ha_0)$ and hence we always have $\Fe\subseteq \Ne$ (Proposition \ref{prop:fixed}).
Similarly, the invariant normal states have the form
\[
\rho=\sum_{i=0}^{d-1} \tilde U_{i}\rho_0\tilde U_{i}^*\otimes |i\>\<i|,\qquad \rho_0\in \states(\ha_0),\quad
 \tilde U\rho_0\tilde U^*=\rho_0.
\]
It follows that  normal invariant states for $\Phi$ are obtained from normal invariant states for the unitary conjugation $\tilde U\cdot \tilde U^*$.

Due to Corollary  \ref{coro:diag},  $\Ne=\Ne_D$ is block diagonal, moreover, since for any path $\pi$ of length $n$, the operator $L_\pi$ is nonzero if and only if $\pi=(i,i+ 1,\dots,i+ n)\in \mathcal P_n(i,i+ n)$, we can see from Corollary \ref{diag-off-diag} that 
 $\Ne$ consists of all block diagonal operators, i.e. 
$$
\Ne=\{\sum_{i=0}^{d-1} A_i\otimes |i\>\<i|,\, A_i\in B(\ha_i),\, i=0,\dots,d-1\}
$$
with minimal central projections $I_{\ha_i}\otimes |i\>\<i|$, $i=0,\dots,d-1$.

It is then clear that $\Phi$ has a unique MFNC component, i.e. $\mathcal Z=\mathcal Z(\Fe)\cap \mathcal Z(\Ne)$ is trivial, with period $d$. It can be instructive to compare the above example to the results of Section \ref{sec:reducible}, when a faithful normal invariant state exists. 
In the notations of  Proposition \ref{prop:dfs}, we have $\Ka_m^L=\ha_m$, $\dim(\Ka_m^R)=1$ and 
$T_m=U_m^*$, $m=0,\dots,d-1$.  
Moreover, we see that the obtained form of  $\Fe$ and of invariant normal states corresponds to the results of Theorem 
\ref{prop:fixed_multi}, where here $\Ka^R\simeq \mathbb C^d$ and  $\sigma$ is the tracial state on  $B(\mathbb C^d)$.
In fact, we can observe the following result.

\begin{prop}\label{prop:shift} Let $\Phi$ be a quantum channel admitting a faithful normal invariant state, with a unique MFNC component. Then there exists an OQRW $\Psi$ of the above form, such that $\Ne(\Phi)\simeq \Ne(\Psi)$ and $\Phi|_{\Ne(\Phi)}\simeq \Psi|_{\Ne(\Psi)}$.

\end{prop}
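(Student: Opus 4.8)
The plan is to read off the required open quantum random walk directly from the structure theorem for $\Phi|_{\Ne}$, namely Proposition \ref{prop:dfs}. Since $\Phi$ has a faithful normal invariant state and a unique MFNC component, the algebra $\mathcal Z$ is trivial and $\Phi$ has a finite period $d$ with cyclic resolution $Q_0,\dots,Q_{d-1}$. Applying Proposition \ref{prop:dfs} I obtain the Hilbert spaces $\Ka_m^L$, the unitaries $S_m$, and the unitaries $T_m:\Ka_m^L\to\Ka_{m-1}^L$ such that
\[
\Ne(\Phi)=\bigoplus_{m=0}^{d-1}S_m^*(B(\Ka_m^L)\otimes I_m^R)S_m,\qquad \Phi(S_m^*(A_m\otimes I_m^R)S_m)=S_{m-1}^*(T_mA_mT_m^*\otimes I_{m-1}^R)S_{m-1}.
\]
Thus the restriction $\Phi|_{\Ne(\Phi)}$ is completely encoded by the cyclic family of spaces $\{\Ka_m^L\}$ together with the conjugations by the $T_m$; this is exactly the data carried by a cyclic shift OQRW, and the task reduces to building one with matching data.

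Next I would construct $\Psi$. Because each $T_m$ is a unitary, all the spaces $\Ka_0^L,\dots,\Ka_{d-1}^L$ are mutually isomorphic, so I may fix a single local space $\ha:=\Ka_0^L$ and identify $\ha_m\simeq\Ka_m^L$ for each $m\in\mathbb Z_d$. I then define the cyclic shift OQRW on $\bigoplus_{m\in\mathbb Z_d}\ha_m$ by setting $U_m:=T_m^*:\ha_{m-1}\to\ha_m$ and $L_{m,m-1}:=U_m$, all other transition operators being zero; since for each $j$ only $L_{j+1,j}=U_{j+1}$ is nonzero and $U_{j+1}^*U_{j+1}=I_{\ha_j}$, condition \eqref{eq:tracepreserving} holds and $\Psi:=\mathfrak M^*$ is a genuine quantum channel of the cyclic shift form. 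By the computation in the cyclic shift example, its decoherence free algebra is the full block-diagonal algebra $\Ne(\Psi)=\{\sum_m A_m\otimes\ketbra{m}{m}:A_m\in B(\ha_m)\}\simeq\bigoplus_m B(\ha_m)$, and its restricted action sends $\sum_m A_m\otimes\ketbra{m}{m}$ to $\sum_m U_m^*A_mU_m\otimes\ketbra{m-1}{m-1}=\sum_m T_mA_mT_m^*\otimes\ketbra{m-1}{m-1}$.

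Finally I would exhibit the intertwining isomorphism. Define $J:\Ne(\Phi)\to\Ne(\Psi)$ on each block by $J(S_m^*(A_m\otimes I_m^R)S_m):=A_m\otimes\ketbra{m}{m}$ under the identification $\Ka_m^L\simeq\ha_m$. Since both algebras are the direct sums $\bigoplus_m B(\Ka_m^L)$ and $\bigoplus_m B(\ha_m)$ with these blocks, $J$ is a normal $*$-isomorphism of von Neumann algebras, and comparing the two displayed action formulas gives $J\circ\Phi|_{\Ne(\Phi)}=\Psi|_{\Ne(\Psi)}\circ J$ block by block, which is the assertion. The computation is entirely routine; the only genuine points to watch are the observation that the $\Ka_m^L$ share a common isomorphism class (so that a single local space, as required by the cyclic shift form, suffices) and the bookkeeping of the cyclic index and of the adjoint relating $U_m$ to $T_m$ --- exactly the identification $T_m=U_m^*$ already noted in the example. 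The substance of the statement therefore lies entirely in Proposition \ref{prop:dfs}; the present proposition is its repackaging in the language of open quantum walks.
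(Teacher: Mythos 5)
Your construction is correct and is exactly the argument the paper intends: the paper gives no separate proof of Proposition \ref{prop:shift} but records the same dictionary just before stating it ($\Ka_m^L=\ha_m$, $\dim \Ka_m^R=1$, $T_m=U_m^*$), so that the cyclic shift OQRW built from the data of Proposition \ref{prop:dfs} reproduces $\Ne(\Phi)$ and the restricted action block by block. Your intertwining map $J$ and the verification $J\circ\Phi|_{\Ne(\Phi)}=\Psi|_{\Ne(\Psi)}\circ J$ check out.
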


\subsubsection{A homogeneous OQRW  with generalized Pauli operators}

Let $V=\mathbb Z_2$ and $\ha_0=\ha_1=\ha$. Let $L_{00}=L_{11}=\sqrt{\alpha} U_0$, $L_{01}=L_{10}=\sqrt{1-\alpha}U_1$, with $\alpha\in (0,1)$ and $U_0,U_1$ unitaries on $\ha$. Explicitly, $\Phi$ acts as
\[
\Phi(A)=\left[\alpha(U_0^*A_{00}U_0)+(1-\alpha)U_1^*A_{11}U_1\right]\otimes|0\>\<0|+\left[(1-\alpha)U_1^*A_{00}U_1+\alpha U_0^*A_{11}U_0\right]\otimes|1\>\<1|.
\]
Assume $d:=\dim(\ha)<\infty$, then $(2d)^{-1} I_\ha\otimes I_2$ is a faithful invariant state for $\Phi$. We next investigate the fixed points and decoherence free subalgebra in the case when $U_0$ and $U_1$ are generalized Pauli operators described below.

Let $\{|j\>,\ j=0,\dots,d-1\}$ denote a fixed ONB in $\ha$ and let $\oplus$ be addition modulo $d$. Put $\omega=e^{i2\pi/d}$ and define
 the operators $Z$ and $X$ as
\begin{align*}
Z|j\>&=\omega^j|j\>\\
X|j\>&=|j\oplus 1\>
\end{align*}
Then $Z$ and $X$ are unitaries satisfying the commutation relation
\[
ZX=\omega XZ.
\]
Let us also denote
\[
W(p)=Z^pX^{-p},\qquad p\in \mathbb Z,
\]
then $W(p)$ satisfy the relations
\begin{equation}\label{eq:weyl}
W(p)W(q)=W(q)W(p)=\omega^{pq}W(p+q).
\end{equation}

Let $\Phi$ be an OQRW as above, with 
$$U_0=Z,  \qquad U_1=X.
$$ 
We first find the fixed point subalgebra of $\Phi$, this can be done using Proposition \ref{prop:fixed}. We see that
\[
\Fe=\{Z\otimes |0\>\<0|,\ Z\otimes |1\>\<1|,\ X\otimes |0\>\<1|, X\otimes |1\>\<0|\}'
\]
and from this, we get
\begin{equation}\label{eq:fixed_pauli}
\Fe=\{\left(\begin{array}{cc} A & 0\\ 0 & XAX^*, \end{array}\right),\ A\in \{Z,X^2\}'\}.
\end{equation}
The condition $A\in \{Z,X^2\}'$ implies that $A$ is  diagonal in the basis $\{|j\>\}$ and   
\[
A=X^2A(X^*)^2\implies \sum_j a_j|j\>\<j| = \sum_j a_j |j\oplus 2\>\<j\oplus 2|,
\]
so that $a_j=a_{j\oplus 2}$ for $j=0,\dots,d-1$. 

Assume now that $d$ is odd. Then it follows that $a_j=a_0$ for all $j$, so that $\Fe$ is trivial.
 Hence, in this case, $\Phi$ is irreducible. Put $W=W(1)=ZX^*$, then 
\[
Z^*WZ=X^*WX=\omega W
\]
It follows that $\Phi(W\otimes I_2)=\omega(W\otimes I_2)$, so $\tilde W:= W\otimes I_2$ is an eigenvector related to the 
peripheral eigenvalue $\omega$. The eigenvalues of $W$ are $\omega^k$, $k=0,\dots,d-1$, each with an eigenvector $|x_k\>$. 
Hence the period of $\Phi$ is $d$ and we have the  cyclic decomposition 
\[
\{Q_m=|x_m\>\<x_m|\otimes I_2,\ m=0,\dots, d-1\}. 
\]
By the results of Subsection \ref{Subsection:IQC}, $\Ne$ is spanned by $\{Q_0,\dots, Q_{d-1}\}$.

We next turn to the more interesting case when  $d$ is even. Put $q=d/2$.
Then \eqref{eq:fixed_pauli} holds, with $A=a_+P_++ a_-P_-$, where $a_+,a_-\in \mathbb C$ and
\[
P_+=\sum_{k=0}^{q-1}|2k\>\<2k|,\qquad P_-=\sum_{k=0}^{q-1}|2k+1\>\<2k+1|.
\]
So $\Fe$ is isomorphic to the abelian algebra spanned by these two projections.
Note that we have $XP_+X^*=P_-$, $XP_- X^*=P_+$, so that we may write
\[
\Fe=\mathrm{span}\{\tilde P_+:=\left(\begin{array}{cc} P_+ & 0\\ 0 & P_- \end{array}\right),\tilde P_-:=\left(\begin{array}{cc} P_- & 0\\ 0 & P_+ \end{array}\right)\}.
\]
Let us compute $\Ne$ using Proposition \ref{prop:oqrw-m&n}. Note first that by the commutation relations, we have for $\pi\in \mathcal P_n(i,j)$, 
\[
L_\pi=xZ^{n-l}X^l,
\]
where $x\ne 0$ is some constant  and $l\in \mathbb N$ is even if and only if  $i=j$. It follows that if $\pi\in \mathcal P_n(j,i)$, 
$\pi'\in \mathcal P_n(j,k)$, we have
\[
L_\pi L_{\pi'}^*=y Z^{p}X^{-p}=y W(p),
\]
where $0\ne y\in \mathbb C$ and  $|p|$ is even iff $k=i$. Since all $L_{i,j}$ are (nonzero) multiples of unitary operators, we must have $\Ne_{OD}=\{0\}$. 
From the conditions on the diagonal blocks, we obtain that $A_{i,i}$ must commute with $W(p)$ for all even $|p|$ and 
$A_{i,i}=W(p)^*A_{j,j}W(p)$ for all $|p|$ odd if $i\ne j$. Using \eqref{eq:weyl}, we obtain that 
\[
\Ne=\{\left(\begin{array}{cc} A & 0 \\ 0 & WAW^*\end{array}\right),\ A\in \{W(2)\}'\}.
\]
It follows that $\Ne$ is isomorphic to the algebra $\{W(2)\}'$. One can see by \eqref{eq:weyl} and $d=2q$ that $W(2)^q=I$, so that the eigenvalues of $W(2)$ are the $q$-th roots of unity, that is, $\mu_m=\omega^{2m}$, $m=0,\dots,q-1$. Let us denote
\[
|m,+\>:=\frac1{\sqrt{q}} \sum_{l=0}^{q-1}\omega^{2l(m-l+1)}|2l\>,\qquad |m,-\>:=\frac1{\sqrt{q}} \sum_{l=0}^{q-1}\omega^{2l(m-l)}|2l\oplus 1\>
\]  
then one can check that
\[
Q_m:=\ketbra{m,+}{m,+}+\ketbra{m,-}{m,-}
\]
is the eigenprojection corresponding to the eigenvalue $\mu_m$. Since $W$ commutes with $W(2)$ by \eqref{eq:weyl}, 
 we have $W Q_mW^*= Q_m$, so that  the center of $\Ne$ is spanned by the projections  
\[
\tilde Q_m:= Q_m\otimes I_2,\qquad m=0,\dots,q-1.
\] 
Further, it is easily checked that for $m=0,\dots,q-1$, we have
\[
Z|m,+\>= |m\oplus_q 1,+\>,\qquad Z|m,-\>= \omega |m\oplus_q 1,-\>
\]
and
\[
X|m,+\> = |m\oplus_q 1,-\>,\qquad X|m,-\>=\bar \omega^{2(m+1)}|m\oplus_q1, +\>.
\]
Since  the action of $\Phi$ on elements of $\Ne$ has the form
\[
\Phi\left(\begin{array}{cc} A & 0 \\ 0 & WAW^*\end{array}\right)=\left(\begin{array}{cc} Z^*AZ & 0 \\ 0 & X^*AX\end{array}\right),
\]
 we obtain  $\Phi(\tilde Q_m)=\tilde Q_{m\ominus_q 1}$.
It follows that there is a unique cycle of length $q$ and consequently only one MFNC component $\Ne_{[1]}=\Ne$, with period $q$.  

We will identify the objects described in Section \ref{sec:reducible} for this special case. 
We have  $\mathcal K_m^L= Q_m\ha\simeq \mathbb C^2$, $\Ka_m^R=\mathbb C^2$ and $\Ka^R=\sum_m \mathbb C^2\otimes \ketbra{m}{m}\simeq \ha$. Put $S_m=I_m^L\otimes |0\>\<0|+W^*|_{\Ka^L_m}\otimes |1\>\<1|$, $m=0,\dots,q-1$, then we have
\[
\Ne=\oplus_m S^*_m(B(\Ka^L_m)\otimes I_2)S_m.
\]
The unitaries $T_m:\Ka_m^L\to \Ka^L_{m\ominus_q 1}$ are given by the restrictions $T_m=Z^*|_{\Ka^L_m}$, so the action of $\Phi$ on $\Ne$ is described by the 
homogeneous cyclic shift OQRW on $\mathbb Z_q$, with local spaces $\mathbb C^2$ and unitaries $U_i\equiv U=\left(\begin{array}{cc}   1 &0 \\ 0&\omega\end{array}\right)$, $i=0,\dots,q-1$ (cf. Proposition \ref{prop:shift}).

Let us  compute the states $\rho_m$ and maps $\Xi_m$ defined in Proposition \ref{prop:dfs}.   Let $\Delta, \bar\Delta: B(\mathbb C^2)\to B(\mathbb C^2)$ be given by 
 \[
\Delta \left(\begin{array}{cc} b_{00} & b_{01} \\ b_{10} & b_{11}\end{array}\right) = \left(\begin{array}{cc} b_{00} & 0 \\ 0 & b_{11}\end{array}\right),\quad
\bar \Delta \left(\begin{array}{cc} b_{00} & b_{01} \\ b_{10} & b_{11}\end{array}\right) = \left(\begin{array}{cc} b_{11} & 0 \\ 0 & b_{00}\end{array}\right).
 \]
 It is easily checked that for each $m$, the map $\Xi_m: B(\mathbb C^2)\to B(\mathbb C^2)$ is defined as
\[
\Xi_m= \Xi :=\alpha \Delta+ (1-\alpha) \bar \Delta.
\]
It follows by Proposition \ref{prop:dfs} (i) that the states $\rho_m$  must all be equal to the unique invariant state $\rho=\frac 12 I_2$  of $\Xi$.

Let us now turn to Theorem \ref{prop:fixed_multi}. 
 We have 
  $\tilde T_{m}=Z^{-m}|_{\Ka^L_0}$, $m=0,\dots,q-1$, in particular, the unitary
 $\tilde T_{0}$ has two eigenvalues $\pm1$, with eigenvectors $|0,\pm\>$, so that 
 \[
 \tilde T_{0}=|0,+\>\<0,+|-|0,-\>\<0,-|.
\]
The subalgebra $\{\tilde T_{0}\}'\cap B(\Ka_0^L)$ of Theorem \ref{prop:fixed_multi} is the abelian subalgebra spanned by 
 the projections $\ketbra{0,\pm}{0,\pm}$. Note that we have
\[
\sum_{m=0}^{q-1} \tilde T_{m}|0,\pm\>\<0,\pm|\tilde T^*_{m} =\sum_{m=0}^{q-1} Z^{-m}|0,\pm\>\<0,\pm|Z^m=\sum_{m=0}^{q-1} |m,\pm\>\<m,\pm|=P_\pm,
\]
so that
\[
S^*T(\ketbra{0,\pm}{0,\pm}\otimes I^R)T^*S=\tilde P_\pm
\]
are the central projections of $\Fe$,
which corresponds to Theorem \ref{prop:fixed_multi} (i). For $s\in [0,1]$, put $\omega_s:=s\ketbra{0,+}{0,+}+(1-s) \ketbra{0,-}{0,-}$, then 
 we can see from  Theorem \ref{prop:fixed_multi} (ii) and (iii) that the invariant states of $\Phi$ are precisely those of the form
 \[
\xi_s:=S^*T(\omega_s\otimes \frac1d I_d)T^*S=\frac 1d (s\tilde P_++ (1-s)\tilde P_-).
 \]
Finally, let $\Psi_\pm$ be the irreducible channels on $B(\Ka^R)$ corresponding to the restrictions of $\Phi$ by the projections 
 $\tilde P_\pm$   as in Theorem \ref{prop:fixed_multi} (iv). Let $X_q$, $Z_q$ be the generalized Pauli operators on the $q$-dimensional Hilbert space with standard basis $\{|m\>\}$. One can check that we have
 \[
\Psi_+ = (\alpha \Delta+ (1-\alpha) \bar\Delta)\otimes (X_q\cdot X_q^*)= \Xi\otimes   (X_q\cdot X_q^*)
\]
and 
\[
\Psi_- = \alpha \Delta \otimes (X_q\cdot X_q^*) + (1-\alpha) \bar\Delta\otimes (X_qZ_q^*\cdot Z_qX_q^*).
\]

\subsubsection{A homogeneous nearest neighbor OQRW on ${\mathbb Z}$}


Let us consider a homogeneous nearest neighbor OQRW $\Phi$  on $V=\mathbb Z$, with local space ${\mathbb C}^2$. We will assume that the transition operators 
$L_+$, $L_-$ are  invertible and that $\Phi$ is irreducible. The last condition implies that no invariant state exists, so the results of previous sections
 cannot be applied. Nevertheless, we show that also in this case the decoherence free algebra is generated by the cyclic resolution of $\Phi$, cf. Corollary \ref{coro:W1}.

\begin{prop} Let us denote $P_{even}:=\sum_i I_2\otimes \ketbra{2i}{2i}$, $P_{odd}:=\sum_i I_2\otimes \ketbra{2i+1}{2i+1}$. Then 
$$
\Ne=\mbox{\rm span}\{ P_{odd}, P_{even}\}
$$
unless there exists an orthonormal basis $\{f_0, f_1\}$ such that $L_-$ and $L_+$ are one diagonal and one off-diagonal in this basis.
In the last case, $\Ne$ is generated by the cyclic projections 
$$
P_{\epsilon,\delta}=\sum_j (\ketbra{f_\epsilon}{f_\epsilon} \otimes \ketbra{4j+\delta}{4j+\delta}
+ \ketbra{f_{1-\epsilon}}{f_{1-\epsilon}} \otimes \ketbra{4j+2+\delta}{4j+2+\delta}),
$$
with $\epsilon,\delta=0,1$ and the period is 4. Otherwise the period is 2 with cyclic projections $P_{odd},\,  P_{even}$.
\end{prop}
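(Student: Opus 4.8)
The plan is to combine the block-diagonal description of $\Ne$ for open quantum random walks with a $2\times2$ case analysis of the algebra generated by the transition operators. Since $L_+,L_-$ are invertible, every nonzero $\mathrm{Range}(L_{i,j})$ is all of $\mathbb C^2$, so $\mathcal W_i=\{0\}$ for all $i$ and Corollary \ref{coro:diag} gives $\Ne=\Ne_D$; write $A=\sum_m A_m\otimes\ketbra{m}{m}$. Using homogeneity, $L_\pi$ depends only on the word $w\in\{+,-\}^*$ of steps of $\pi$, and for $\pi\in\mathcal P_n(j,i)$, $\pi'\in\mathcal P_n(j,k)$ one has $L_\pi L_{\pi'}^*=L_wL_{w'}^*$ with $|w|=|w'|$ and $i-k=\nu(w)-\nu(w')$, where $\nu$ denotes net displacement. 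As $\nu(w)-\nu(w')$ is always even, the conditions of Corollary \ref{diag-off-diag} couple only equal-parity sites; in particular $P_{even},P_{odd}\in\Ne$ and $\Ne$ splits over the even and odd sublattices. I would then separate the \emph{stationary} relations ($\nu(w)=\nu(w')$), which force each $A_m$ into $\mathcal D'$, where $\mathcal D\subseteq B(\mathbb C^2)$ is the unital $*$-algebra generated by the same-displacement products $L_wL_{w'}^*$, from the shortest \emph{step} relation (take $w=+$, $w'=-$), namely $A_m=GA_{m-2}G^{-1}$ with $G:=L_+L_-^*$ invertible, which recovers all blocks of one parity from a single one.

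A unital $*$-subalgebra of $B(\mathbb C^2)$ is either all of $B(\mathbb C^2)$ or abelian, so I would split on $\mathcal D$. If $\mathcal D=B(\mathbb C^2)$, then each $A_m\in\mathcal D'=\mathbb CI$, and the step relation makes the resulting scalars constant on each parity, giving $\Ne=\mathrm{span}\{P_{even},P_{odd}\}$; since $\Phi(P_{even})=P_{odd}$, $\{P_{even},P_{odd}\}$ is the cyclic resolution and the period is $2$. If $\mathcal D$ is abelian, I would first rule out $\mathcal D=\mathbb CI$: that would force $L_\pm$ to be scalar multiples of unitaries commuting up to a phase, and in either resulting normal form one exhibits a nonzero projection $P$ with $\Phi(P)=P$, i.e. a nontrivial subharmonic projection, contradicting irreducibility. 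Hence $\mathcal D$ is a maximal abelian subalgebra, diagonal in some ONB $\{f_0,f_1\}$. Choosing a non-scalar $D\in\mathcal D$ and using that $L_\pm D L_\pm^*\in\mathcal D$ must commute with $D$ (together with $H_\pm:=L_\pm L_\pm^*\in\mathcal D$ being diagonal), I would deduce that each of $L_+,L_-$ is either diagonal or off-diagonal in $\{f_0,f_1\}$. The two ``same type'' configurations are again excluded by irreducibility: ``both diagonal'' leaves the constant projection $\sum_m\ketbra{f_0}{f_0}\otimes\ketbra{m}{m}$ fixed, and ``both off-diagonal'' leaves the alternating projection fixed. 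Thus exactly one of $L_+,L_-$ is diagonal and the other off-diagonal, which is the announced exceptional basis.

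In this exceptional case $\mathcal D'$ is the diagonal algebra, so each $A_m$ is diagonal in $\{f_0,f_1\}$, while $G=L_+L_-^*$ is off-diagonal with nonzero entries; hence $A_m=GA_{m-2}G^{-1}$ interchanges the two diagonal entries. Since $G^2$ is a scalar, the recursion has period $4$ in $m$, and solving it produces exactly the four mutually orthogonal projections $P_{\epsilon,\delta}$, which sum to $I$ and span $\Ne$. Evaluating $\Phi$ on them (using $L_+^*L_++L_-^*L_-=I$) shows that $\Phi$ permutes them in a single $4$-cycle, so they form the cyclic resolution and the period is $4$, whereas in the generic case the cyclic resolution is $\{P_{even},P_{odd}\}$ and the period is $2$.

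The main obstacle is the abelian branch: establishing that, under invertibility and irreducibility, $\mathcal D$ abelian forces precisely the one-diagonal/one-off-diagonal normal form — i.e. proving the $2\times2$ trichotomy for $(L_+,L_-)$ in a common basis and using irreducibility to discard the degenerate configurations — and checking that the solution $\{A_m\}_{m\in\mathbb Z}$ of the step relation remains globally bounded on the infinite lattice (here automatic because $G^2$ is scalar, so the conjugation orbit is finite). The remaining verifications, that each $P_{\epsilon,\delta}$ genuinely lies in $\Ne$ and that the four of them realize a single $4$-cycle, are then routine $2\times2$ computations.
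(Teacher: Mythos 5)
Your proof is correct, but it follows a genuinely different route from the paper's. The paper argues entirely at the level of projections in $\Ne$: a nontrivial subprojection of $P_{even}$ is shown to have rank-one blocks $P_{2j}=\ketbra{V^{-j}u}{V^{*j}u}$ (with $V=L_-L_+^*$), the vectors $u,u^\perp$ are forced to be common eigenvectors of the three operators $|L_+^*|$, $|L_-^*|$ and $W=(L_+^*L_-^*)^{-1}L_-^*L_+^*$, a separate claim excludes the case where these are all proportional to the identity, and the then essentially unique common eigenbasis yields the diagonal/off-diagonal dichotomy. You instead work with general elements of $\Ne$, encode the equal-endpoint relations in the unital $*$-algebra $\mathcal D$ generated by the same-displacement products $L_wL_{w'}^*$, and run the trichotomy $\mathcal D=B(\mathbb C^2)$ / maximal abelian / $\mathbb C I$, killing the degenerate branches by exhibiting explicit fixed projections. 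Both arguments rest on the same characterizations (Proposition \ref{prop:oqrw-m&n} and Corollaries \ref{diag-off-diag}, \ref{coro:diag}, \ref{coro-OQRWs-M}) and on the same logical scheme --- a subset of the necessary relations bounds $\Ne$ from above, and the bound is matched by directly verified preserved projections --- so your deferred verifications really are routine. What your route buys: it determines the whole algebra rather than only its projections, the case analysis is driven by the standard classification of unital $*$-subalgebras of $B(\mathbb C^2)$ rather than by an ad hoc triple of operators, and your exclusion of $\mathcal D=\mathbb C I$ explicitly covers the anticommuting normal form $U_+U_-=-U_-U_+$, where the fixed projection is the alternating one built from an eigenvector of $U_-U_+$; this is exactly the configuration that the paper's proof of its final claim treats most tersely (its matrix-element identity yields $c=1$ and $\lambda_0=\lambda_1$ directly only when $U_-$ has a nonzero diagonal entry in the eigenbasis of $U_+$). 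What the paper's route buys is brevity: everything reduces to a single eigenvector computation with three explicit $2\times 2$ operators. The one point to tighten in your write-up is that the inclusion $L_\pm D L_\pm^*\in\mathcal D$ should be checked on a non-scalar generator $D=L_wL_{w'}^*$ (where it is immediate, since conjugation just appends a step to both words), rather than asserted for an arbitrary element of $\mathcal D$.
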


The period was already computed in \cite{capa2015}.

\begin{proof}
By Corollary \ref{coro:diag}, we know that the decoherence free algebra $\Ne$ consists only of block-diagonal operators. Then a projection $P$ in $\Ne$ will have the form 
$$
P=\sum_j P_j \otimes \ketbra{j}{j},
$$
where, by Corollary \ref{coro-OQRWs-M}, $P_j$ are projections  satisfying at least the conditions
\begin{equation}\label{conditions}
P_{j} \in\{|L_+^*|, |L_-^*|\}', \qquad P_{j-1} L_- L_+^* = L_- L_+^* P_{j+1} \qquad \forall \, j.
\end{equation}
We can write the action of $\Phi$ explicitly, in particular
\begin{eqnarray}\label{Phi(P)}
\Phi(P) &=& \sum_j( L_+^*P_{j+1}L_+ + L_-^*P_{j-1}L_- ) \otimes \ketbra{j}{j},\nonumber\\
\Phi^2(P) &=&  \sum_j( L_+^{*2}P_{j+2}L_+^2 + L_-^{*2}P_{j-2}L_-^2 +L_-^*L_+^*P_jL_+L_- +L_+^*L_-^*P_jL_-L_+ ) \otimes \ketbra{j}{j}.
\end{eqnarray}
By these relations, it is easily deduced that $\Phi^n(P_{odd})$ is equal to $P_{odd}$ for even $n$ and to $P_{even}$ for odd $n$ (and similarly for $\Phi^n(P_{even})$). In particular, $\Phi^n(P_{odd}),\Phi^n(P_{even})$ are always projections and this allows us to conclude that $P_{odd}$ and $P_{even}$ belong to $\Ne$, Proposition 
\ref{prop:dfageneral} (ii). Moreover, they are trivially central, i.e., for any other projection $P$ in $\Ne$, $PP_{odd}=P_{odd}P$ and $PP_{even}=P_{even}P$.

When there exists an orthonormal basis $\{f_0, f_1\}$ such that $L_-$ and $L_+$ are one diagonal and one off-diagonal in this basis, it is easy to see that the projections $P_{\epsilon,\delta}$ in the statement are cyclic. It is a little more complicated to see that these cyclic projections can exist only in that case and anyway no other minimal projection can then appear.

So now we want to consider, for a homogeneous irreducible OQRW, whether there exists a projection $P$ in $\Ne\setminus \mbox{span}\{ P_{odd}, P_{even}\}$. We shall see that this is not possible, unless we are in the special case described in the statement.

If such a $P$ exists, then $P=PP_{odd}+PP_{even}$ and the two addends are both in $\Ne$, so, by homogeneity, it will be sufficient to search for a projection $P$ in $\Ne$ such that $P=PP_{even}$ and $0<P<P_{even}$. Then we consider $P=\sum_j P_{2j}\otimes \ketbra {2j}{2j}$.

Relations \eqref{conditions} imply that all the $P_{2j}$'s have the same rank (since the transition operators are invertible). Then, if $P$ is different from $0$ and from $P_{even}$, the only possibility is that $P_{2j}$ is a rank one projection for any $j$. Calling $u$ a norm one vector such that $P_0=\ketbra{u}{u}$, and denoting $V:=L_- L_+^*$, we deduce
$$
P=\sum_j \ketbra{V^{-j}u}{V^{*j}u} \otimes \ketbra{2j}{2j},
$$
where $V^{-j}u \parallel V^{*j}u$ because any $P_{2j}$ is a projection and, due to the first condition in \eqref{conditions}, $V^{*j}u$ is a common eigenvector of $|L_+^*|$ and $ |L_-^*|$ for any $j$.

Similar considerations will hold for $\Phi^n(P)$, but considering only odd vertices instead of even vertices when $n$ is odd. Indeed, starting with $n=1$ (for $\Phi^n(P)$ we simply proceed inductively),
\\
- $\Phi(P)$ is a projection in $\Ne$, $\Phi(P)\le P_{odd}$ due to the fact that $0\le P\le P_{even}$ and $\Phi$ is positive,
\\
- moreover , when $P\neq P_{even}$ then $\Phi(P)\neq P_{odd}$ by irreducibility; indeed, if we had for instance $P\neq P_{even}$ and $\Phi(P)=P_{odd}$, then $P_{even}-P$ would be a non-zero projection in the kernel of $\Phi$ and this contradicts irreducibility. 

Then, using \eqref{Phi(P)}, we need that
$$
\Phi^2(P) (1\otimes \ketbra{0}{0})=  ( L_+^{*2}P_{2}L_+^2 + L_-^{*2}P_{-2}L_-^2 +L_-^*L_+^*P_0L_+L_- +L_+^*L_-^*P_0L_-L_+ ) 
$$
is a one dimensional projection. This implies in particular that
$L_-^*L_+^*u\parallel L_+^*L_-^*u$, so that $u$ is an eigenvector for $(L_+^*L_-^*)^{-1}L_-^*L_+^*$.

Also, calling $u^\perp$ a norm one vector orthogonal to $u$, $P':=P_{even}-P=\sum_j \ketbra{V^{-j}u^\perp}{V^{*j}u^\perp} \otimes \ketbra{2j}{2j},$ will be a projection in $\Ne$ and so $u^\perp$ will satisfy the same conditions as $u$.

Summing up, we have that $u$ and $u^\perp$ should be two distinct eigenvectors for the operators 
\begin{equation}\label{3operators}
|L_+^*|, \qquad |L_-^*|, \qquad W:=(L_+^*L_-^*)^{-1}L_-^*L_+^*.
\end{equation}

Now, we claim that, due to irreducibility, the previous operators cannot be all proportional to the identity and we postpone of some lines the proof of this claim.

This fact implies that, either such vectors $u$ and $u^\perp$ do not exist, and so $\Ne=\mbox{\rm span}\{ P_{odd}, P_{even}\}$, or they can be chosen in a unique way, up to multiplicative constants, as the orthonormal basis which diagonalize all the three operators above. In the latter case, we now look at the form of $\Phi(P)$ given in \eqref{Phi(P)} and we see that
$$
\Phi(P) (1\otimes \ketbra{j}{j})=L_+^*P_{j+1}L_+ + L_-^*P_{j-1}L_- 
$$
should be a one dimensional projection on a vector $v$ which should be an eigenvector of the same three operators. This implies that 
$$
L_\epsilon^*u,L_\epsilon^*u^\perp \in 
{\rm span}\{ u\} \cup {\rm span}\{ u^\perp\},
\qquad \epsilon=+,-
$$
and consequently that the operators $L_+$ and $L_-$ should be either diagonal or off-diagonal in the basis 
$\{u, u^\perp\}$; but they cannot be both diagonal nor both off-diagonal, because this would contradict irreducibility. So the conclusion follows choosing $\{f_0, f_1\}=\{u, u^\perp\}$.

Finally, we go back to prove the claim. By contradiction, we suppose that all the operators in \eqref{3operators} are proportional to the identity, so that
$$
L_+=c_+U_+, \qquad L_-=c_-U_- \qquad W=c 1
$$
for some complex numbers $c_+,c_-,c$ and unitary operators $U_+,U_-$. 
Then we can rewrite
$$
W=c1= U_-U_+U_-^*U_+^*
\qquad \Rightarrow \qquad
U_-=cU_+U_-U_+^*
$$
But now write the diagonal form of the unitary $U_+$, $U_+^*=\sum_{k=0,1}\lambda_k\ketbra{v_k}{v_k}$, with $\lambda_0,\lambda_1$ in the unit circle and $\{v_0,v_1\}$ orthonormal basis, and consider
$$
\langle v_k,U_- v_j \rangle = c \langle v_k,U_+U_-U_+^* v_j \rangle
= c\overline{\lambda}_k\lambda_j \langle v_k,U_- v_j \rangle
\qquad \mbox { for } j,k=0,1.
$$
This implies $c=1$ and $\lambda_0=\lambda_1$ which requires that $U_+$ and so $L_+$ are proportional to the identity. But this contradicts irreducibility.
\end{proof}


\begin{thebibliography}{99}
\bibitem{AGG} A. Arias, A. Gheondea,  S. Gudder, Fixed points of quantum operations, Journal of Mathematical Physics 43, 5872 (2002).
\bibitem{Arveson} W. Arveson, 
The asymptotic lift of a completely positive map. J. Funct. Anal. 248 (2007), no. 1, 202--224.
\bibitem{Attal} Attal, S., Petruccione, F., Sabot, C., Sinayskiy, I.,
Open Quantum Random Walks,	 Journal of Statistical Physics
147(4), pp. 832-852 (2012).
\bibitem{bardet2017} I. Bardet,  Estimating the decoherence time using non-commutative functional inequalities, arXiv:1710.01039 (2017).
\bibitem{BBP} Bardet, I., Bernard, D., Pautrat, Y.	, Passage Times, Exit Times and Dirichlet Problems for Open Quantum Walks, Journal of Statistical Physics
167(2), pp. 173-204 (2017).

\bibitem{BR2018}  I. Bardet and  C. Rouz\'e, Hypercontractivity and logarithmic Sobolev inequality for non-primitive quantum Markov semigroups and estimation of decoherence rates, arXiv:1803.05379 (2018).

\bibitem{BatkaiEtAl} Batkai, A.; Groh, U.; Kunszenti-Kovacs, D.; Schreiber, M.. Decomposition of operator semigroups on W*-algebras. Semigroup Forum 84 (2012), no. 1, 8--24. 
\bibitem{BN}
{ B. Baumgartner, H. Narnhofer}, The structures of state space concerning quantum dynamical semigroups, {\it Rev. Math. Phys.} 24, no. 2, 1250001, 30 pp.  (2012).
 \bibitem{BO} 
{ Ph.\ Blanchard, R.\ Olkiewicz}, Decoherence induced transition from quantum to classical dynamics, {\it Rev. Math. Phys.}, {\bf 15}, 217-243 (2003).
\bibitem{BJKW}
O. Bratteli, P.E.T. Jorgensen, A. Kishimoto and R.F. Werner, Pure States on ${\mathcal O}_d$, J. Operator Theory, {\bf 43}, 97--143 (2000).
\bibitem{BR} O.\ Bratteli, D.\ W.\ Robinson, {\it Operator algebras and quantum statistical mechanics. C$^*$- and W$^*$-algebras, symmetry groups, decomposition of states.} Texts and Monographs in Physics. Springer-Verlag, New York, 1987
\bibitem{capa2015} Carbone, R., Pautrat, Y., Homogeneous Open Quantum Random Walks on a Lattice, Journal of Statistical Physics
160(5), pp. 1125-1153 (2015).
\bibitem{capa2016RMP} R. Carbone  and Y. Pautrat, Irreducible decompositions and stationary states of quantum channels, {\it Reports on Mathematical Physics} {\bf 77} (2016), 293-313.


\bibitem{CSU} R. Carbone,  E. Sasso, and V. Umanit\`a, Environment induced decoherence for Markovian evolutions, 
{\it  Journal of Mathematical Physics} {\bf 56} (2015), 092704.%

\bibitem{choi} M.D. Choi, A Schwarz inequality for positive linear maps on $C^*$-algebras. Illinois J. Math. {\bf 18} (1974), 565--574

\bibitem{CJK2009} M. D. Choi, N. Johnston and D. W. Kribs, The multiplicative domain in quantum error correction
, {\it J.  Phys. A} {\bf 42} (2009), 24530.

\bibitem{DFSU} J. Deschamps, F.  Fagnola, E. Sasso, V.  Umanit\`a,  Structure of uniformly continuous quantum Markov semigroups, 
{\it  Reviews in Mathematical Physics} {\bf 28}, 1650003.

\bibitem{dunford-schwartz} N. Dunford and J.T. Schwartz, {\em Linear Operators: General theory}, Pure and Applied Mathematics, Interscience Publishers, New York, 1958.

\bibitem{EHK}  D.E. Evans, R. Hoegh-Krohn,
Spectral properties of positive maps on C*-algebras	, Journal of the London Mathematical Society
s2-17(2), pp. 345-35 (1978).

\bibitem{FP} F. Fagnola, R. Pellicer,
Irreducible and periodic positive maps. 
Commun. Stoch. Anal. 3 (2009), no. 3, 407--418. 

\bibitem{DFR} 
A. Dhahri, F. Fagnola, R. Rebolledo, The decoherence-free subalgebra of a quantum Markov semigroup with unbounded generator. Infin. Dimens. Anal. Quantum Probab. Relat. Top. 13 (2010), no. 3, 413--433.

\bibitem{DM}
Dhahri, A.;  Mukhamedov, F..
Open quantum random walks, quantum Markov chains and recurrence,
published online, Reviews in Mathematical Physics
(2019).

\bibitem{frigerioverri} Frigerio, Alberto; Verri, Maurizio. Long-Time Asymptotic Properties of Dynamical
Semigroups on W*-algebras.
Mathematische Zeitschrift, 275--286 (1982).

\bibitem{Groh81} U. Groh, The peripheral point spectrum of Schwarz operators on C*-algebras. Mathematische Zeitschrift, 1981
176(3), pp. 311--318
\bibitem{Groh82} U. Groh, Some observations on the spectra of positive operators on finite-dimensional C*-algebras.
 Linear Algebra and its Applications 42 (1982), 213-222.  
\bibitem{Groh83} Groh, U, 
On the peripheral spectrum of uniformly ergodic positive operators on C*-algebras. J. Operator Theory 10 (1983), no. 1, 31-- 37.
\bibitem{Groh84} Groh, U, Spectrum and asymptotic behaviour of completely positive maps on ${\mathcal B}(H)$. Math. Japon. 29 (1984), no. 3, 395--402.

\bibitem {GFY} Guan, J., Feng, Y., Ying, M., Decomposition of quantum Markov chains and its applications, Journal of Computer and System Sciences
95 (2018), pp. 55-68.


\bibitem{Hellmich} Hellmich, M. Quantum Dynamical Semigroups and Decoherence. Advances in Mathematical Physics Volume 2011, Article ID 625978, 16 pages.

\bibitem{JencovaPetz} A. Jen\v cov\' a, D. Petz, Sufficiency in quantum statistical inference, {\it Commun. Math. Phys.} {\bf 263} (2006), 259-276.

\bibitem{JK2011} N. Johnston and D. W. Kribs, Generalized  multiplicative  domains  and quantum  error  correction, 
{\it Proc. Amer. Math. Soc.} {\bf 139} (2011), 627–639.

\bibitem{KN} B. K\"ummerer and  R. Nagel, Mean ergodic semigroups on W*-algebras, Acta Sci. Math. 41 (1979), 151-155. 

\bibitem{LS} Lardizabal, C.F., Souza, R.R, On a class of quantum channels, open random walks and recurrence,	 Journal of Statistical Physics
159(4), pp. 772-796 (2015).

\bibitem{luczak} A. \L uczak, Quantum sufficiency in the operator algebra framework, {\it Int J Theor Phys} {\bf 53} (2014), 3423. 

\bibitem{SP2019} Sinayskiy, I., Petruccione, F.,
Open quantum walks: A mini review of the field and recent developments, European Physical Journal: Special Topics
227(15-16), pp. 1869-1883 (2019).

\bibitem{petz1988} D.  Petz, Sufficiency of channels over von Neumann algebras, {\it Quart. J. Math. Oxford} {\bf 39}
(1988), 907--1008.

\bibitem{R} { M. Rahaman, Multiplicative properties of quantum channels, {\it J.  Phys. A} {\bf 50}
(2017), 345302.} 
\bibitem{RJP2018}  M. Rahaman, S. Jaques, and V. I. Paulsen, Eventually entanglement breaking maps, Journal of Mathematical Physics 59, 062201 (2018).

\bibitem{robinson} {D.W. Robinson, 
Strongly positive semigroups and faithful invariant states,	Communications in Mathematical Physics
85(1), pp. 129-142 (1982).

\bibitem{Schrader} Schrader, R.: Perron-Frobenius theory for positive maps on trace ideals. In:
Mathematical physics in mathematics and physics (Siena, 2000), volume 30 of
Fields Inst. Commun., pp. 361--378. Am. Math. Soc., Providence, RI (2001).}

\bibitem{takesakiI} 
 M. Takesaki, Theory of operator algebras. I. Reprint of the first (1979) edition. Encyclopaedia of Mathematical Sciences, 124. Operator Algebras and Non-commutative Geometry, 5. Springer-Verlag, Berlin, 2002.
\bibitem{tomiyama} 
J. Tomiyama, On the projection of norm one in W*-algebras. III. Tohoku Math. J. (2) 11 (1959), 125--129.


\bibitem{WolfLN} M.M.~Wolf, Quantum channels and operations - guided tour, Online Lecture
Notes, 2012.
\bibitem{WPG}{M.M.~Wolf and D.~P\'erez-Garcia, The inverse eigenvalue problem for Quantum Channels, arxiv:005.4545, 2010}.

\end{thebibliography}
\end{document}